\begin{document}
\baselineskip 16pt

\newcommand\C{{\mathbb C}}
\newcommand\CC{{\mathbb C^n}}
\newcommand\NN{\mathbb{N}}
\newcommand\ZZ{\mathbb{Z}}
\newcommand\R{{\mathbb R}}
\newcommand\RR{{\mathbb R^n}}

\renewcommand\Re{\operatorname{Re}}
\renewcommand\Im{\operatorname{Im}}

\newcommand{\mc}{\mathcal}
\newcommand\D{\mathcal{D}}

\newtheorem{thm}{Theorem}[section]
\newtheorem{prop}[thm]{Proposition}
\newtheorem{cor}[thm]{Corollary}
\newtheorem{lem}[thm]{Lemma}
\newtheorem{lemma}[thm]{Lemma}
\newtheorem{exams}[thm]{Examples}
\theoremstyle{definition}
\newtheorem{defn}[thm]{Definition}
\newtheorem{rem}[thm]{Remark}

\numberwithin{equation}{section}
\newcommand\bchi{{\chi}}
\newcommand\relphantom[1]{\mathrel{\phantom{#1}}}
\newcommand\ve{\varepsilon}  \newcommand\tve{t_{\varepsilon}}
\newcommand\vf{\varphi}      \newcommand\yvf{y_{\varphi}}
\newcommand\bfE{\mathbf{E}}

\title[Singular Integral Operators on the Fock space]
{A Boundedness Criterion for Singular Integral Operators\\[2pt] of Convolution Type on the Fock Space}

\author[G. F. Cao,   J. Li, M. X Shen, B.D. Wick and L.X. Yan]{Guangfu Cao, \ Ji Li, \
 Minxing Shen, \ Brett D. Wick \ and \ Lixin Yan}
\address{Guangfu Cao, Department of Mathematics, South China Agricultural University, Guangzhou, Guangdong 510640,   P.R. China}
\email{guangfucao@163.com}
\address{Ji Li, Department of Mathematics, Macquarie University, NSW, 2109, Australia}
\email{ji.li@mq.edu.au}
\address{Minxing Shen, Department of Mathematics, Sun Yat-sen (Zhongshan)
University, Guangzhou, 510275, P.R. China}
\email{shenmx3@163.com}
\address{Brett D. Wick, Department of Mathematics, Washington University in St. Louis, St. Louis, MO 63130-4899 USA}
\email{wick@math.wustl.edu}

\address{
Lixin Yan, Department of Mathematics, Sun Yat-sen (Zhongshan) University, Guangzhou, 510275, P.R. China
and Department of Mathematics, Macquarie University, NSW 2109, Australia}
\email{mcsylx@mail.sysu.edu.cn
}

\date{\today}
 \subjclass[2010]{30H20, 42A38, 44A15}
\keywords{ Fock space, Singular integral operator, Bargmann transform, Fourier transform, Riesz transform, Spectrum, Reducing subspace}

\begin{abstract}
We show that  for an entire  function $\varphi$ belonging to the Fock space  ${\mathscr F}^2(\mathbb{C}^n)$ on the complex Euclidean space $\mathbb{C}^n$, the integral operator
\begin{eqnarray*}
S_{\varphi}F(z)=\int_{\mathbb{C}^n} F(w) e^{z \cdot\bar{w}} \varphi(z- \bar{w})\,d\lambda(w),  \ \ \  \ \ z\in \mathbb{C}^n,
\end{eqnarray*}
is bounded on   ${\mathscr F}^2(\mathbb{C}^n)$ if and only if there
exists a function $m\in L^{\infty}(\mathbb{R}^n)$ such that
$$
	\varphi(z)=\int_{\mathbb{R}^n} m(x)e^{-2\left(x-\frac{i}{2}   z  \right)^2} dx, \ \ \ \  \ \ z\in \mathbb{C}^n.
$$
Here $d\lambda(w)=  \pi^{-n}e^{-\left\vert w\right\vert^2}dw$ is the Gaussian measure on $\mathbb C^n$.
With this characterization we are able to obtain some fundamental results of the operator $S_\varphi$, including the normality, the $C^*$ algebraic properties, the spectrum and its compactness. Moreover, we obtain the reducing  subspaces of $S_{\varphi}$.

In particular, in the case $n=1$, we give a complete solution to an open problem proposed by K. Zhu for the Fock space ${\mathscr F}^2(\mathbb{C})$
on the complex plane ${\mathbb C}$ (Integr. Equ. Oper. Theory  {\bf  81} (2015),   451--454).
 \end{abstract}

\maketitle
   \tableofcontents

\section{Introduction }
\setcounter{equation}{0}

The Fock space ${\mathscr F}^2(\CC)$ consists of all   entire  functions $F$ on  the complex Euclidean space $\CC$ such that
$$
\|F\|_{{\mathscr F}^2(\CC)}=\left(\int_{\CC} |F(z)|^2d\lambda(z)\right)^{1\over 2}<\infty,
$$
where
$$
d\lambda(z) = \pi^{-n} e^{-|z|^2} dz
$$
  is the Gaussian measure on $\CC$.
The Fock space ${\mathscr F}^2(\CC)$ is a Hilbert space, whose inner product  is
inherited from $L^2(\CC, d\lambda)$.
This space is a convenient setting for many problems in functional analysis, mathematical physics, and engineering.
We refer to \cite{B1, B2, BC, F, G, Z2, Z1} for an introduction to the theory of Fock spaces and some connections with other areas of mathematics and engineering.

\smallskip

For $\varphi\in \mathscr F^2(\CC)$, consider the integral operator
   \begin{eqnarray} \label{e1.1}
S_{\varphi}F(z)= \int_{\mathbb{C}^n} F(w) e^{z \cdot\bar{w}} \varphi(z- \bar{w}) d\lambda(w).
\end{eqnarray}

\medskip

In 2015, K.  Zhu proposed  the following problem for the Fock space  ${\mathscr F}^2(\C)$ on the complex plane ${\mathbb C}$  (see \cite{Z1}):
    Characterize those functions $\varphi\in  {\mathscr F}^2(\C)$   such that the integral operator
$S_{\varphi}$ in \eqref{e1.1}
is bounded on ${\mathscr F}^2(\C)$.

Two natural conjectures arise from Zhu's question and are related to the ``reproducing kernel thesis'', which roughly says that the behavior of $S_\varphi$ is determined by its action on the normalized reproducing kernels $k_z$ of the Fock space.  Two possible versions of this reproducing kernel thesis one might hope to be true are:
$S_{\varphi}:{\mathscr F}^2(\C)\to {\mathscr F}^2(\C)$ if and only if one of the following conditions holds:
\begin{eqnarray*}
\sup_{z\in\C}\left\Vert S_\varphi k_z\right\Vert_{{\mathscr F}^2(\C)} & < & \infty,\\
\sup_{z\in\C}\left\vert \left\langle S_\varphi k_z,k_z\right\rangle_{{\mathscr F}^2(\C)}\right\vert=\sup_{z\in\C}\left\vert \varphi(z-\overline{z})\right\vert & < & \infty.
\end{eqnarray*}
This strategy is a common, and successful, one to try when working on operator theoretic questions in complex analysis, see \cite{A, BCFEMT, B, MW, PTW, Sm, T}.  While natural, this is unfortunately not true since it is possible to provide a counterexample (provided in Remark \ref{rem3.5} below) to the reproducing kernel thesis in this context, meaning that the exact answer to Zhu's question is more subtle.  In this article, we obtain a complete solution to this open problem using harmonic analysis methods and are further able to resolve the question for the Fock space in all dimensions.  

In \cite{Z1}, via an example, Zhu suggests that there should be some connection between resolving his question and harmonic analysis since he demonstrates that the Hilbert transform is unitarily equivalent to $S_\varphi$ for a special choice of $\varphi$.  From this one example we were lead to guess that Fourier multiplier operators, which are in correspondence with bounded functions, should in fact provide the answer to Zhu's question.
 Indeed, we have the following result on the Fock space ${\mathscr F}^2(\CC)$.

\begin{thm}\label{th1.1}
 The  integral operator $S_\varphi$ in \eqref{e1.1}
  is bounded on ${\mathscr F}^2(\CC)$ if and only if there exists an $m\in L^\infty(\RR)$
such that
\begin{eqnarray}\label{e1.2}
	\varphi(z)=\left({2\over \pi}\right)^{n\over 2}\int_{\RR} m(x) e^{-2\left(x-\frac{i}{2}   z  \right)^2}dx, \ \ \ \  z\in \CC.
\end{eqnarray}
Moreover, we have that
$$ \|S_\varphi\|_{{\mathscr F}^2(\CC)\to {\mathscr F}^2(\CC)} = \|m\|_{L^\infty(\mathbb R^n)}. $$
\end{thm}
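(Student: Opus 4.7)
My plan is to produce an explicit unitary conjugation under which $S_\varphi$ becomes the multiplication operator $M_m$ on $L^2(\mathbb{R}^n)$. The natural candidate is the Bargmann transform $B:L^2(\mathbb{R}^n)\to \mathscr{F}^2(\mathbb{C}^n)$, whose kernel is a Gaussian of exactly the type $(2/\pi)^{n/4}e^{2x\cdot z-x^2-z^2/2}$ appearing in the integrand of \eqref{e1.2}. Indeed, after completing the square, $e^{-2(x-\tfrac{i}{2}z)^2}=e^{2ix\cdot z-2x^2+z^2/2}$, so the relation $\varphi\leftrightarrow m$ in the theorem is essentially a Bargmann transform composed with a Fourier-type rotation. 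Since $\|M_m\|_{L^2\to L^2}=\|m\|_{L^\infty}$, producing such a unitary equivalence will simultaneously yield the characterization and the norm equality.

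For the sufficiency direction, assume $m\in L^\infty(\mathbb{R}^n)$ and let $\varphi$ be given by \eqref{e1.2}. I will evaluate $S_\varphi(Bf)$ for $f$ in a dense subspace of $L^2(\mathbb{R}^n)$ (say Schwartz functions, whose Bargmann images are dense in $\mathscr{F}^2(\mathbb{C}^n)$), substitute the integral formula for $\varphi$, and invoke Fubini. After the change of variable $w\mapsto\bar w$ the inner integral over $\mathbb{C}^n$ becomes a Gaussian reproducing-kernel computation against the Bargmann kernel, which collapses to a Fourier transform of $f$ multiplied by an auxiliary Gaussian. Recombining the remaining integral in $x$ yields the identity $B^{-1}S_\varphi B=U^{-1}M_m U$ on this dense subspace, where $U$ is a unitary built from the Fourier transform. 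Plancherel's theorem immediately gives $\|S_\varphi\|_{\mathscr{F}^2\to\mathscr{F}^2}\le\|m\|_{L^\infty}$.

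For the necessity direction, assume $S_\varphi$ is bounded and set $T:=B^{-1}S_\varphi B$, a bounded operator on $L^2(\mathbb{R}^n)$. The key point is to show that $T$ is translation-invariant, hence a Fourier multiplier $M_m$ with $m\in L^\infty(\mathbb{R}^n)$ and $\|m\|_{L^\infty}=\|T\|=\|S_\varphi\|$. Translation on $L^2(\mathbb{R}^n)$ corresponds under $B$ to the Weyl--Bargmann shift $W_\zeta F(z)=F(z-\zeta)e^{\bar\zeta\cdot z-|\zeta|^2/2}$ (or a closely related family), so it suffices to verify that $S_\varphi$ intertwines with the appropriate such family; this is a direct consequence of the convolution-type structure of the kernel $e^{z\cdot\bar w}\varphi(z-\bar w)$, which is manifestly invariant under the phase-space shift in the $(z,\bar w)$ variable. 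Once $T=M_m$ is established, retracing the computation of the first direction reconstructs $\varphi$ from $m$ via \eqref{e1.2}, and the norm equality $\|S_\varphi\|=\|m\|_{L^\infty}$ follows.

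The principal obstacle is the necessity step, namely isolating the correct intertwining family on $\mathscr{F}^2(\mathbb{C}^n)$ and verifying cleanly that $S_\varphi$ commutes with it. One has to keep careful track of the holomorphic and antiholomorphic variables in $e^{z\cdot\bar w}\varphi(z-\bar w)$, since only the combination $z-\bar w$ is shifted by the Weyl operators, and the phase factor $e^{z\cdot\bar w}$ must cooperate with the normalization $e^{\bar\zeta\cdot z-|\zeta|^2/2}$ of $W_\zeta$. Once this commutation is done, the passage to a Fourier multiplier is immediate from the classical characterization of translation-invariant bounded operators on $L^2(\mathbb{R}^n)$.
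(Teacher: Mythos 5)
Your proposal is correct and follows essentially the same route as the paper: conjugate by the Bargmann transform, show that $T=B^{-1}S_\varphi B$ commutes with translations by checking that $S_\varphi$ commutes with the Weyl operators $W_aF(z)=F(z-a)e^{z\cdot a-a^2/2}$ (with $a\in\RR$), invoke the classical characterization of translation-invariant bounded operators on $L^2(\RR)$ as Fourier multipliers, and identify $\varphi$ with the Gaussian integral of $m$ via the explicit computation based on $B\mathcal{F}B^{-1}F(z)=F(-iz)$, the norm equality then following from unitarity. The only point you gloss is that, in the necessity direction, ``retracing'' the computation only yields some $\varphi_0$ of the form \eqref{e1.2} with $S_\varphi=S_{\varphi_0}$, and concluding $\varphi=\varphi_0$ requires the injectivity of $\psi\mapsto S_\psi$, which the paper settles routinely by evaluating at $z=0$ and expanding in the monomial orthonormal basis of ${\mathscr F}^2(\CC)$.
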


The idea of the proof is to utilize the Bargmann transform to reformulate the question as one about a certain operator on $L^2(\RR)$ that is translation invariant.  Then for the operator we have in this context, it will fall into a category of operators well-studied in the harmonic analysis literature, the Fourier multiplier operators, to which we apply the Bargmann transform again and provide the answer to Zhu's question.

\medskip

%We would like to mention that from   \cite[Proposition 2]{Z1},
 %a necessary condition for $S_{\varphi}$ to be
%bounded on ${\mathscr F}^2(\CC)$ is that $\varphi(z- {\bar z})$ is bounded. In other words, the  boundedness of
%$S_{\varphi}$ implies that the function $\varphi$ is bounded on the imaginary axis.

%As a direct corollary, we can characterize the accurate norm of  $S_{\varphi}$, which is not known before.
%\begin{cor}\label{cor1.2}
%For $\varphi(z)=\int_{\RR} m(x)e^{-2\left(x-\frac{i}{2}   z  \right)^2} dx$ with some $m\in L^\infty(\mathbb R^n)$, we have that
%$$ \|S_\varphi\| = \|m\|_{L^\infty(\mathbb R^n)}. $$
%\end{cor}

%As applications, we also establish some important properties of $S_{\varphi}$.
%\begin{thm}\label{th1.3}
% For any $m\in L^\infty(\mathbb R^n)$,  $S_\varphi$ is a normal operator on $\mathscr F(\mathbb C^n)$ with $\varphi$ defined as in \eqref{e1.2}.
% \end{thm}

With the characterization in Theorem \ref{th1.1} we are able to obtain some fundamental operator theory results about $S_{\varphi}$.  In particular, we are able to determine the normality of $S_{\varphi}$, the spectrum of an individual $S_{\varphi}$ and the reducing  subspaces of $S_{\varphi}$.  A particular corollary of our work is:
\begin{thm}\label{th1.3}
 Suppose $\varphi \in\mathscr F^2(\mathbb C^n)$ such that $S_\varphi$ is bounded on $\mathscr F^2(\mathbb C^n)$, then $S^*_\varphi=S_{\tilde\varphi}$, where $\varphi$ is as in \eqref{e1.2} and
 $$ \tilde\varphi (z) =\left({2\over \pi}\right)^{n\over 2} \int_{\mathbb R^n} \overline { m(x) }
  e^{-2\left(x-\frac{i}{2}   z  \right)^2} dx. $$
 Furthermore, $S_\varphi$ is normal.
 \end{thm}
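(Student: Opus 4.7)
\medskip

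\noindent\textbf{Proof plan for Theorem \ref{th1.3}.}

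The plan is to exploit the correspondence, established in the proof of Theorem \ref{th1.1}, between the operator $S_{\varphi}$ on $\mathscr F^2(\mathbb C^n)$ and a Fourier multiplier operator on $L^2(\mathbb R^n)$. Concretely, let $B:L^2(\mathbb R^n)\to \mathscr F^2(\mathbb C^n)$ denote the Bargmann transform and write $T_m$ for the Fourier multiplier with symbol $m\in L^\infty(\mathbb R^n)$. Theorem \ref{th1.1} (and the intermediate identities in its proof) give that $S_\varphi = B\, T_m\, B^{-1}$ precisely when $\varphi$ has the representation \eqref{e1.2}. The strategy is to compute the adjoint on the $L^2(\mathbb R^n)$ side, where everything is transparent, and then transfer back to $\mathscr F^2(\mathbb C^n)$.

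First I would verify that, since $B$ is unitary, $S_\varphi^{*} = B\, T_m^{*}\, B^{-1}$. The adjoint of a Fourier multiplier is again a Fourier multiplier with symbol given by the complex conjugate, i.e.\ $T_m^{*} = T_{\overline m}$; this follows from Plancherel and a direct unfolding of $\langle T_m f, g\rangle_{L^2(\mathbb R^n)}$. Hence $S_\varphi^{*} = B\, T_{\overline m}\, B^{-1}$, and since $\overline m\in L^\infty(\mathbb R^n)$ with $\|\overline m\|_\infty=\|m\|_\infty<\infty$, Theorem \ref{th1.1} applied to the symbol $\overline m$ yields $B\,T_{\overline m}\,B^{-1} = S_{\tilde\varphi}$ with
\[
\tilde\varphi(z)=\left(\frac{2}{\pi}\right)^{n/2}\int_{\mathbb R^n}\overline{m(x)}\, e^{-2\left(x-\frac{i}{2}z\right)^{2}}\,dx,\qquad z\in\mathbb C^n,
\]
exactly as claimed.

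For normality, the same unitary equivalence reduces the question to normality of $T_m$ on $L^2(\mathbb R^n)$. Two Fourier multipliers with bounded symbols always commute, because after conjugation by the Fourier transform they become the commuting multiplication operators by $m$ and $\overline m$; consequently $T_m T_{\overline m}=T_{\overline m}T_m = T_{|m|^2}$. Conjugating this identity by $B$ gives $S_\varphi S_\varphi^{*} = S_\varphi^{*} S_\varphi$, proving that $S_\varphi$ is normal.

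The only genuinely delicate point in this plan is the bookkeeping in the first step: one must check carefully which convention of the Bargmann transform and which normalization of the Fourier transform are used in the proof of Theorem \ref{th1.1}, so that the kernel representation of $\tilde\varphi$ comes out with exactly the factor $(2/\pi)^{n/2}$ and the exponent $-2(x-\tfrac{i}{2}z)^{2}$ as stated. Apart from this constant-tracking, every step is a transparent consequence of the unitary conjugation $S_\varphi = B T_m B^{-1}$ and the elementary properties of Fourier multipliers on $L^2(\mathbb R^n)$.
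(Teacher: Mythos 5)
Your proof is correct, but it follows a different route from the paper's argument for Theorem \ref{th1.3}. The paper proves $S_\varphi^*=S_{\tilde\varphi}$ by a direct computation on the Fock space side: it unfolds $\langle f, S_\varphi g\rangle_{\mathscr F^2(\mathbb C^n)}$ using the kernel $e^{z\cdot\bar w}\varphi(z-\bar w)$, inserts the multiplier representation \eqref{e1.2} to identify $\overline{\varphi(z-\bar w)}=\tilde\varphi(w-\bar z)$, and then applies Fubini to read off the adjoint's kernel; normality is then quoted from the commutation $S_\varphi S_\psi=S_\psi S_\varphi$, which the paper only substantiates later (Section \ref{s:Operator}) via the multiplier picture. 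You instead work entirely through the unitary conjugation $S_\varphi=BT_mB^{-1}$: since $B$ is unitary (Lemma \ref{le2.3}) and $T_m^*=T_{\overline m}$ (with the paper's normalization $\mathcal F$ is unitary, so this is immediate from $T_m=\mathcal F^{-1}M_m\mathcal F$), the formula for $\tilde\varphi$ drops out of Lemma \ref{le3.3} applied to the symbol $\overline m$, and normality follows from $M_mM_{\overline m}=M_{\overline m}M_m$. This is essentially the viewpoint the paper adopts later for the $C^*$-algebra result (Theorem \ref{prop4.1}), so your argument is consistent with the paper's framework; what it buys is a shorter derivation in which the adjoint identity and normality come simultaneously and with a cleaner justification of the commutation, whereas the paper's direct kernel computation has the virtue of exhibiting $S_\varphi^*$ as an integral operator without passing back and forth through $B$. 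The one point you rightly flag—matching the Bargmann/Fourier conventions so that the constant $(2/\pi)^{n/2}$ and the exponent $-2\left(x-\frac{i}{2}z\right)^2$ come out as stated—is exactly what Lemma \ref{le3.3} supplies, so there is no gap.
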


In the last decades, Toeplitz operators, Hankel operators and composition operators on several analytic
function spaces (Hardy spaces, Bergman spaces, Dirichlet spaces and Fock spaces) have been widely studied. For example,
one may consult  the references \cite{BC,BH, Do, Sa}. It  is well-known that these operators are never normal if
their symbols are analytic. For example, if $\varphi $ is a bounded analytic function on the unit disc $\mathbb{D}$ in
the complex plane $\mathbb{C}$, or unit ball $\mathbb{B}_n$ in the complex space $\mathbb{C}^n$, then $T_{\varphi}$,
the Toeplitz operator on the Hardy space $H^2(\mathbb{D})$ or $H^2(\mathbb{B}_n)$ , is normal if and only if $\varphi $
is a constant.  However, $S_{\varphi }$ is always normal although $\varphi$ is
analytic, this is a surprising phenomenon.  For the other operator theory results that are immediate corollaries of
Theorem \ref{th1.1} and Theorem \ref{th1.3} we refer to Section \ref{s:Operator}.

We provide two remarks regarding our main results  Theorem \ref{th1.1} and \ref{th1.3}, on the extension to the Fock space $\mathscr F^2_{\alpha}(\CC)$ and on the boundedness on the Fock space
${\mathscr F}^p(\CC)$  for $p\in [1,\infty)$, respectively.
\begin{rem}\label{rem1.1}
There are natural extensions of the results in Theorem \ref{th1.1} and Theorem \ref{th1.3} to the Fock space $\mathscr F^2_{\alpha}(\CC)$, where
$$
\|F\|_{{\mathscr F}_{\alpha}^2(\CC)}=\left(\int_{\CC} |F(z)|^2d\lambda_{\alpha}(z)\right)^{1/2}<\infty,
$$
and
$$
d\lambda_{\alpha}(z) = \pi^{-n} e^{-\alpha|z|^2} dz
$$
with $\alpha>0$.  We don't precisely formulate these results since the modifications necessary to do so are standard.
\end{rem}

\begin{rem}\label{rem1.2}
It is natural to ask whether the characterization of $S_\varphi$ as in Theorem \ref{th1.1} can imply boundedness of $S_\varphi$ on the Fock space
${\mathscr F}^p(\CC)$ for $p\in [1,\infty)$, where ${\mathscr F}^p(\CC)$ consists of all entire  functions $F$ on  the complex Euclidean space $\CC$ such that
$$
\|F\|_{{\mathscr F}^p(\CC)}=\left(\int_{\CC} |F(z)|^pd\lambda(z)\right)^{1\over p}<\infty.
$$
However, this is not true for $p\in[1,2)$. We will provide a counterexample in Section \ref{s:Proof}.  The reader should not confuse the definition we use here with the other definition in the literature of those entire functions such that $f(z)e^{-\frac{\left\vert z\right\vert^2}{2}}$ belong to $L^p(\CC)$. 
{See for example \cite{CHH}. We point out that the operatoter $S_\varphi$ may not be well-defined in the other Fock space for $2<p<\infty$. Explanations will be provided in Section  \ref{s:Proof}.} 
%{\color{red}See also \cite{CHH} for another definition of the Fock space ${\mathscr F}^p(\CC)$.}
\end{rem}

The outline of the remainder of the paper is as follows.  In Section \ref{s:prelim} we collect the basic definitions and concepts
that we will need to prove the main result.  In Section \ref{s:Proof} we give the proof of the main result and in Section \ref{s:Apps}
we show how the main result can recover the known examples in the literature and can further recover some canonical Calder\'on--Zygmund operators.
In Section
\ref{s:Operator}
we study operator theoretic properties of the operator $S_\varphi$, including the normality,
$C^*$ algebraic properties, the compactness, the spectrum and the reducing subspaces. In the final section we provide some concluding remarks.

%\begin{cor}\label{cor1.4}
%For any $m\in L^\infty(\mathbb R^n)$, we have $\sigma(S_\varphi) =\mathcal R (m)(\mathbb R^n) $, where $\mathcal R (m)(\mathbb R^n)$ is the essential range of $m$.
%\end{cor}

\section{Preliminaries}
\setcounter{equation}{0}
\label{s:prelim}

{We now set the notation and some common concepts to be used throughout the course of the paper.}  $\RR $ denotes the real Euclidean space and $\CC$ denotes the complex Euclidean space. To simplify the dot product notation, we will denote by simple juxtaposition:  $xy=x \cdot y=\sum_{j=1}^n x_jy_j.$ In particular, this implies that $x^2=x \cdot x=\sum_{j=1}^n x_j^2$.  The Hermitian inner product in $\CC$ will be denoted by
$z{\bar w}$ when $z,w\in\CC$; this then gives $|z|^2=z{\bar z}= \sum_{j=1}^n |z_j|^2$.  The standard norm on the Lebesgue space $L^2(\RR)$ will be denoted by $\|f\|_2=\|f\|_{L^2(\RR, dx)}$.  And, as introduced earlier, the Fock space on $\CC$ will be denoted by $\mathscr F^2(\CC)$ with the norm:
$$
\|f\|_{{\mathscr F}^2(\CC)}=\left(\int_{\CC} |F(z)|^2d\lambda(z)\right)^{1/2}
$$
where $d\lambda(z) = \pi^{-n} e^{-|z|^2} dz$.

A fundamental tool in our analysis is the Fourier transform of a function $f$, i.e.
$$
{\mathcal F} f(x) =\pi^{-{n\over2}} \int_{\RR} e^{-2ix\cdot y}f(y) dy, \ \ \ \ x\in\RR.
$$
The inverse of the Fourier  transform ${\mathcal F}$ will be denoted by ${\mathcal F}^{-1}$, i.e,
 ${\mathcal F}{\mathcal F}^{-1}={\mathcal F}^{-1}{\mathcal F}=Id$, the identity operator on $L^2(\RR)$.

\subsection{The Fock Space}

We start by recalling some basic facts about the Fock space.
Throughout the paper,
we denote the scalar product on ${\mathscr F}^2(\mathbb C^n)$ by
$\langle \cdot , \cdot \rangle_{{\mathscr F}^2(\CC)}$.
It is well-known (see for example, \cite[Theorem 1.63]{F}) that  the collection of monomials of the form
$$
e_{\alpha} (z)=\left({1\over \alpha!}\right)^{1\over2} z^{\alpha}= \prod_{j=1}^n \left({1\over \alpha_j!}\right)^{1\over2} z_j^{\alpha_j}
$$
for all $\alpha=(\alpha_1, \ldots, \alpha_n)$ with $\alpha_j\geq 0$, forms  an orthonormal basis for ${\mathscr F}^2(\mathbb C^n)$.
This space $\mathscr F^2(\mathbb C^n)$  is a reproducing kernel Hilbert space, that is
$$
|F(z)|\leq e^{ |z|^2\over2} \|F\|_{{\mathscr F}^2(\CC)},  \ \ \ {\rm for\ all}\ z\in \CC.
$$
The reproducing kernel of $\mathscr F^2(\mathbb C^n)$ is
\begin{eqnarray}\label{mm}
 K(z,{\bar w}) = \sum_\alpha e_\alpha(z) \overline{ e_\alpha(w)}  = \sum_\alpha { z^\alpha\cdot \bar w^\alpha \over \alpha! }  = e^{z\cdot \bar w},
  \end{eqnarray}
so that  $\|K(z,\cdot)\|^2_{\mathscr F^2(\mathbb C ^n)} = e^{|z|^2}$  and
\begin{eqnarray}\label{mmm}
 F(z) = \int_{\mathbb C^n} F(w) e^{z\cdot \bar w} d\lambda(w), \ \ \  z\in\CC
 \end{eqnarray}
when $F\in \mathscr F^2(\mathbb C^n)$.

An important consequence of the existence of a reproducing kernel is that every bounded operator
$T$ on $\mathscr F^2(\mathbb C^n)$ can be written as an integral operator. More precisely we have
\begin{prop}[\cite{F}]\label{prop Folland}
If $T$ is a bounded operator on $\mathscr F^2(\mathbb C^n)$, let $K_T(z,\bar w) = T K(\cdot,\bar w)(z)$.
Then $K_T$ is an entire function on $\mathbb C^{2n}$ that satisfies
\begin{itemize}
\item[(a)] $K_T(\cdot, w) \in\mathscr F^2(\mathbb C^n)$ for all $w$ and $K_T(z,\cdot) \in\mathscr F^2(\mathbb C^n)$ for all $z$;

\item[(b)] $|K_T(z, \bar w)|\leq e^{|z|^2+|w|^2} \|T\|$;

\item[(c)] $TF(z) = \int_{\mathbb C^n} K_T(z, \bar w) F(w) d\lambda(w)$ for all $F\in \mathscr F^2(\mathbb C^n)$ and $z\in\mathbb C^n$.
\end{itemize}
\end{prop}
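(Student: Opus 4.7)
The plan is to handle $K_T$ via two complementary expressions—one coming directly from $T$ and one obtained by moving $T$ to the other side of the inner product—and let the reproducing kernel machinery do essentially all of the work.

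First, for fixed $w$, the function $K(\cdot,\bar w) = e^{(\cdot)\cdot \bar w}$ lies in $\mathscr F^2(\mathbb C^n)$ with $\|K(\cdot,\bar w)\|_{\mathscr F^2}^2 = K(w,\bar w) = e^{|w|^2}$, so that $K_T(\cdot,\bar w) = TK(\cdot,\bar w)$ belongs to $\mathscr F^2(\mathbb C^n)$ and has norm at most $\|T\|\,e^{|w|^2/2}$. This disposes of the first half of (a), and the pointwise bound (b) then falls out immediately from the reproducing kernel estimate:
$$ |K_T(z,\bar w)| = |TK(\cdot,\bar w)(z)| \le e^{|z|^2/2}\,\|TK(\cdot,\bar w)\|_{\mathscr F^2} \le \|T\|\, e^{(|z|^2+|w|^2)/2} \le \|T\|\, e^{|z|^2+|w|^2}. $$

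Next, I would rewrite $K_T$ using $T^\ast$. By the reproducing property $F(z) = \langle F,K(\cdot,\bar z)\rangle_{\mathscr F^2(\mathbb C^n)}$, we have
$$ K_T(z,\bar w) = \langle TK(\cdot,\bar w),\, K(\cdot,\bar z)\rangle_{\mathscr F^2(\mathbb C^n)} = \langle K(\cdot,\bar w),\, T^\ast K(\cdot,\bar z)\rangle_{\mathscr F^2(\mathbb C^n)}, $$
and a short calculation using \eqref{mmm} together with $\overline{e^{u\cdot \bar w}} = e^{\bar u\cdot w}$ shows that this pairing is precisely $\overline{T^\ast K(\cdot,\bar z)(w)}$. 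Hence for each fixed $z$, the function $w\mapsto K_T(z,\bar w)$ is the complex conjugate of an element of $\mathscr F^2(\mathbb C^n)$, which is exactly the sense in which $K_T(z,\cdot)\in\mathscr F^2(\mathbb C^n)$ and completes (a). The two expressions give separate analyticity in $z$ and in the antiholomorphic coordinate $\zeta=\bar w$, so joint entireness on $\mathbb C^{2n}$ follows from Hartogs' theorem.

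Finally, for (c), given any $F\in\mathscr F^2(\mathbb C^n)$ apply the reproducing property to $TF$ and again push $T$ onto the kernel:
$$ TF(z) = \langle TF, K(\cdot,\bar z)\rangle_{\mathscr F^2(\mathbb C^n)} = \langle F,\, T^\ast K(\cdot,\bar z)\rangle_{\mathscr F^2(\mathbb C^n)} = \int_{\mathbb C^n} F(w)\,\overline{T^\ast K(\cdot,\bar z)(w)}\,d\lambda(w), $$
and inserting the identity $\overline{T^\ast K(\cdot,\bar z)(w)} = K_T(z,\bar w)$ established above yields (c). The only delicate point I anticipate is bookkeeping around whether ``$\bar w$'' is to be treated as an independent holomorphic coordinate $\zeta\in\mathbb C^n$; once this convention is fixed, every remaining step is a routine manipulation with the reproducing kernel $e^{z\cdot\bar w}$ and the adjoint of $T$.
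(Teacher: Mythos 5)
Your proposal is correct, and it is essentially the standard argument behind this proposition: the paper itself gives no proof but cites Folland, where the result is established exactly by your route (reproducing property plus passing to $T^{\ast}$ to get $K_T(z,\bar w)=\overline{T^{\ast}K(\cdot,\bar z)(w)}$, Cauchy--Schwarz with $\|K(\cdot,\bar w)\|_{\mathscr F^2}=e^{|w|^2/2}$ for the pointwise bound, and separate analyticity in $z$ and in $\zeta=\bar w$ for joint entireness). No gaps worth flagging beyond the convention about treating $\bar w$ as an independent holomorphic variable, which you already address.
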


As we can see from this proposition, the form of the kernel in \eqref{e1.1} is 
\begin{align}\label{kernel K_T}
 K_T(z, \bar w) = e^{z\cdot \bar w} \varphi(z-\bar w).
\end{align}
%In Theorem \ref{th1.1} we provide a characterization of $\varphi$ such that the operator $T=S_\varphi$ is bounded on $\mathscr F^2(\mathbb C^n)$.

\medskip

\subsection{The Bargmann Transform}
The  Bargmann transform is an old tool in mathematics analysis and mathematical physics
(see \cite{B1, B2, F, G, Se, Z1, Z2, Z3} and references therein).
   Consider $f\in L^2(\RR)$, and define
\begin{eqnarray}\label{e2.1}
 Bf(z)&=& \left({2\over \pi}\right)^{n\over 4}\int_{\RR} f(x) e^{2x\cdot z-x^2- {z^2\over 2}} dx\nonumber \\
 &=& \left({2\over \pi}\right)^{n\over 4} e^{ {z^2\over 2}}  \int_{\RR} f(x) e^{-(x-z)^2}dx, \ \ \ \ z\in \CC.
\end{eqnarray}
  Since the function $e^{2x\cdot z-x^2-(z^2/2)}$ is in $L^2(\RR)$, the integral is absolutely convergent in
$L^2(\RR)$. Using Morera's theorem one may verify that $Bf$ is an entire holomorphic function on $\CC$.
From \eqref{e2.1} one sees that the Bargmann transform is very closely related to the Fourier transform or the Fourier-Wiener
transform (see \cite{F, G}). %In considering $Bf(x+iy)$, one should interpret $x$ as a position variable and $y$ as a frequency variable. In the context of quantum mechanics, the frequency variable has the interpretation of momentum (see \cite{F, G}).
%See \cite{B1, B2, F, G, Se, Z1, Z2} for more information on the Bargmann transform.
%By a standard argument (see for example, \cite{G}), we have the following result.

The following result  is well-known (see for example, \cite{G}).
\begin{lemma} \label{le2.3}
The Bargmann transform is a unitary operator from $L^2(\RR)$ onto ${\mathscr F}^2(\CC)$: it is one-to-one, onto, and isometric
in the sense that
\begin{eqnarray*}\label{e2.2}
\int_{\RR} |f(x)|^2dx =\int_{\CC} |Bf(z)|^2 d\lambda(z).
\end{eqnarray*}
 \end{lemma}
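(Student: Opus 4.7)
The plan is to produce an orthonormal basis of $L^2(\RR)$ which $B$ carries onto the orthonormal monomial basis $\{e_\alpha\}$ of $\mathscr{F}^2(\CC)$ already recorded in the text. Once such a basis is exhibited, the map $B$, being linear and sending an ONB onto an ONB, extends to a unitary operator between the two Hilbert spaces. Both the Bargmann kernel and the monomial basis factor as tensor products over coordinates, so it suffices to handle the case $n=1$ and assemble the general case by taking tensor products afterward.

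The natural one-dimensional candidate is a rescaled Hermite basis adapted to the weight $e^{-x^2}$ appearing in the Bargmann kernel, namely
\[
h_k(x) = \frac{(2/\pi)^{1/4}}{2^{k/2}\sqrt{k!}}\, H_k(\sqrt{2}\,x)\, e^{-x^2},
\]
where $H_k$ denotes the classical (physicists') Hermite polynomial. The change of variable $y=\sqrt{2}\,x$ together with the orthogonality relation $\int_{\R} H_j(y)H_k(y)\, e^{-y^2}\, dy = 2^k k! \sqrt{\pi}\, \delta_{jk}$ shows that $\{h_k\}_{k\geq 0}$ is an orthonormal set in $L^2(\R)$; completeness of this system is classical.

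The crux of the proof is the identity $Bh_k(z) = z^k/\sqrt{k!}$, which I would establish by forming the generating series $\sum_{k\geq 0} (t^k/\sqrt{k!})\, h_k(x)$ and invoking
\[
\sum_{k=0}^\infty \frac{H_k(y)\,s^k}{k!} = e^{2ys - s^2}.
\]
Feeding this through the definition of $B$ collapses the double series on the left to a single Gaussian integral of the form $\int_{\R} e^{-2x^2 + 2x(t+z)}\,dx$, which evaluates in closed form by completing the square. After simplification the $z^2/2$ and $t^2/2$ contributions cancel and the remaining series becomes $e^{tz} = \sum_k (tz)^k/k!$; matching coefficients of $t^k$ then gives $Bh_k(z) = z^k/\sqrt{k!} = e_k(z)$. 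The main obstacle is choosing the normalization constants so precisely that the Gaussian exponents align and yield this clean generating identity; once that alignment is in hand, isometry follows by Parseval and surjectivity is automatic since the image contains a full orthonormal basis of $\mathscr{F}^2(\C)$. Tensoring over the $n$ coordinates then delivers the unitarity of $B \colon L^2(\RR) \to \mathscr{F}^2(\CC)$.
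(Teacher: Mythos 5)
Your proposal is correct, but it cannot be compared line-by-line with the paper because the paper offers no proof at all: it simply cites Gr\"ochenig, \emph{Foundations of Time-Frequency Analysis}, Proposition 3.4.3. Your argument is the classical Hermite-basis proof (essentially Bargmann's original one, also in Folland): the normalization you chose does work out --- with $h_k(x) = (2/\pi)^{1/4}\,2^{-k/2}(k!)^{-1/2}H_k(\sqrt{2}\,x)e^{-x^2}$ the generating-function computation gives
\begin{equation*}
\sum_{k\ge 0}\frac{t^k}{\sqrt{k!}}\,Bh_k(z) \;=\; \Big(\tfrac{2}{\pi}\Big)^{1/2} e^{-\frac{t^2}{2}-\frac{z^2}{2}}\int_{\mathbb R} e^{-2x^2+2x(t+z)}\,dx \;=\; e^{tz},
\end{equation*}
so indeed $Bh_k = e_k$, and tensoring over coordinates handles general $n$. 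The source the paper cites proceeds differently, via the short-time Fourier transform with Gaussian window and its orthogonality (Moyal) relations; that route fits the Bargmann transform into the broader time-frequency framework, whereas yours is more elementary and self-contained and has the added benefit of exhibiting explicitly where an orthonormal basis goes, which is convenient later when one wants to track how concrete operators transform under $B$. Two small points you should make explicit to be complete: (i) the interchange of the sum over $k$ with the integral defining $B$ (routine, by absolute convergence of the Gaussian-weighted series, e.g.\ dominated convergence); and (ii) the identification of the unitary extension from the span of $\{h_k\}$ with the integral operator $B$ on all of $L^2(\mathbb R^n)$ --- this follows because for fixed $z$ the map $f\mapsto Bf(z)$ is bounded on $L^2$ (Cauchy--Schwarz, the kernel being Gaussian in $x$) and point evaluations are bounded on $\mathscr F^2(\mathbb C^n)$, so both sides of the identity pass to $L^2$-limits pointwise. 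Neither issue is a gap in substance, only in write-up.
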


\begin{proof}
For the proof, we refer to \cite[Proposition 3.4.3]{G}.
\end{proof}

Let us now compute the inverse Bargmann transform. Since $B$ is unitary, for $F\in {\mathscr F}^2(\CC)$ and
$g\in L^2(\RR)$, by \eqref{e2.1} we have
$$
\langle B^{-1} F, g\rangle_{L^2(\RR)} =\langle F, Bg\rangle_{{\mathscr F}^2(\CC)} =
 \left({2\over \pi}\right)^{n\over 4}   \int_{\mathbb{C}^n} F(z) \int_{\mathbb{R}^n}    {\overline g(x)}
     e^{2x\cdot \bar{z}-x^2-  {{\bar z}^2\over2}  } dx\,  d\lambda(z),
$$
and hence
\begin{eqnarray}\label{e22.1}
 B^{-1} F(x)&=& \left({2\over \pi}\right)^{n\over 4}\int_{\CC} F(z) e^{2x\cdot {\bar z}-x^2- {{\bar z}^2\over 2}} d\lambda(z),\quad x\in\RR. %\nonumber \\
% &=& \left({2\over \pi}\right)^{n\over 4}   \int_{\CC} F(z) e^{-(x-{\bar z})^2}e^{ {{\bar z}^2\over 2}}d\lambda(z), \ \ \ \ x\in \RR.
\end{eqnarray}

%To study the  Bargmann transform of Riesz transform, we first consider the

To prove our main result Theorem \ref{th1.1}, we need to study the Bargmann transform of the Fourier transform (a bounded operator on $L^2(\RR)$) and inverse Fourier transform (also a bounded operator on $L^2(\RR)$).

\begin{lem}\label{prop2.2}
For every $F\in {\mathscr F}^2(\CC)$ and $z\in \CC $, we have
\begin{align*}
B \mathcal{F} B^{-1}F(z)= F(-iz), \quad{\rm and}\quad B \mathcal{F}^{-1} B^{-1}F(z)= F(iz).
 \end{align*}
\end{lem}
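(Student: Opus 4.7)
The plan is to prove both identities by direct computation: substitute the integral definitions, interchange the order of integration via Fubini, and evaluate the resulting Gaussian integral. I will write out the argument for the Fourier case, since the inverse Fourier case is identical with $i$ replaced by $-i$ throughout.

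Setting $f = B^{-1}F \in L^2(\RR)$, the target identity becomes $B\mathcal{F}f(z) = Bf(-iz)$. Combining \eqref{e2.1} with the definition of $\mathcal{F}$ gives
$$B\mathcal{F}f(z) = \left(\frac{2}{\pi}\right)^{n/4}\!\pi^{-n/2} e^{-z^2/2}\int_{\RR}\!\int_{\RR} e^{-x^2+2x(z-iy)}f(y)\,dx\,dy.$$
Completing the square in $x$ yields $-x^2 + 2x(z-iy) = -(x-(z-iy))^2 + (z-iy)^2$, and a standard contour shift reduces the inner integral to $\int_{\RR} e^{-(x-(z-iy))^2}\,dx = \pi^{n/2}$ for each fixed $y,z$. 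Expanding $(z-iy)^2 = z^2 - 2izy - y^2$ and combining prefactors via $e^{-z^2/2}e^{z^2} = e^{z^2/2} = e^{-(-iz)^2/2}$, I arrive at
$$B\mathcal{F}f(z) = \left(\frac{2}{\pi}\right)^{n/4} e^{-(-iz)^2/2}\int_{\RR} f(y)\, e^{2y(-iz)-y^2}\,dy = Bf(-iz) = F(-iz),$$
which is exactly the claim.

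The main technical issue is the application of Fubini's theorem, since for general $f \in L^2(\RR)$ the double integral need not be absolutely convergent. I would handle this by first carrying out the computation above for $f$ in the Schwartz class $\mathcal{S}(\RR)$, where the rapid decay of $f$ combined with the Gaussian $e^{-x^2}$ makes absolute integrability immediate. The identity then extends to all of $L^2(\RR)$ by density: by Lemma \ref{le2.3}, $B$ is a unitary map $L^2(\RR) \to \mathscr{F}^2(\CC)$, $\mathcal{F}$ is unitary on $L^2(\RR)$, and pointwise evaluation $G \mapsto G(-iz)$ at a fixed $z$ is continuous on $\mathscr{F}^2(\CC)$ via the reproducing-kernel estimate $|G(w)| \le e^{|w|^2/2}\|G\|_{\mathscr{F}^2(\CC)}$. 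Hence both sides of $B\mathcal{F}B^{-1}F(z) = F(-iz)$ are continuous in $F \in \mathscr{F}^2(\CC)$, and the identity extends from $B(\mathcal{S}(\RR))$ to all of $\mathscr{F}^2(\CC)$. As an alternative sanity check, one can verify the identity on the orthonormal basis $\{e_\alpha\}$ using the fact that the preimages $B^{-1}e_\alpha$ are (suitably normalized) Hermite functions and thus eigenfunctions of $\mathcal{F}$ with eigenvalue $(-i)^{|\alpha|}$, which matches $e_\alpha(-iz) = (-i)^{|\alpha|} e_\alpha(z)$.
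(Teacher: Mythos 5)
Your proof is correct, and it takes a route that differs in organization from the paper's. You reduce the statement to the intertwining identity $B(\mathcal{F}f)(z)=Bf(-iz)$ for $f=B^{-1}F\in L^2(\RR)$ and verify it with a single double integral over $\RR^n\times\RR^n$: complete the square, shift the contour, and match the result against the defining formula \eqref{e2.1} evaluated at $-iz$ (using $(-iz)^2=-z^2$). This never invokes the explicit formula \eqref{e22.1} for $B^{-1}$ nor the reproducing formula \eqref{mmm}. The paper instead computes $\mathcal{F}B^{-1}F$ first as an integral over $\CC$ (its formula \eqref{F-1BF}), then applies $B$ and finishes by recognizing $\int_{\CC}F(w)e^{(-iz)\cdot\bar w}\,d\lambda(w)=F(-iz)$ via the reproducing kernel; that intermediate formula \eqref{F-1BF} is reused later (e.g.\ in the Ahlfors--Beurling computation), which is a side benefit of the paper's arrangement. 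What your version buys is a more elementary and more carefully justified argument: you address the Fubini interchange explicitly, first proving the identity for $f$ in the Schwartz class (where, as you note, $|e^{-x^2+2x\cdot(z-iy)}|=e^{-x^2+2x\cdot\Re z}$ is independent of $y$, so $f\in L^1$ already suffices), and then extending to all of $\mathscr F^2(\CC)$ by density, unitarity of $B$ and $\mathcal F$, and continuity of point evaluation from the bound $|G(w)|\le e^{|w|^2/2}\|G\|_{\mathscr F^2(\CC)}$ — a step the paper leaves implicit. Your Hermite-function sanity check on the basis $\{e_\alpha\}$ is consistent with the paper's normalization of $\mathcal F$ and would even serve as an independent second proof by the same density argument.
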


\begin{proof}
This lemma  was proved  in \cite[Theorem 3]{DZ} for  the case $n=1$. See also  \cite[Theorem 4]{Z3}.  We give a brief proof of this lemma in the higher dimensional case for completeness and the convenience of the reader.

%Recall that for $F \in {\mathscr F}^2(\CC)$, we have
%$$  B^{-1}F(t)=c\int_{\CC} F(w) e^{2t \cdot \bar w-t^2-({\bar w}^2/2)} d\lambda(w), \quad t\in \mathbb R^n.
%\ $$

By taking the Fourier transform, we have
\begin{align*}
\mathcal{F} B^{-1}F(\xi)&=\pi^{-{n\over2}} \int_{\RR} e^{-2i\xi\cdot t}B^{-1}F(t) dt
%& = \pi^{-n/2} \int_{\RR} e^{-2i\xi\cdot t} \int_{\CC} F(w) e^{2t \cdot \bar w-t^2-({\bar w}^2/2)} d\lambda(w)\  dt\\
%& = \left({2\over \pi}\right)^{n\over 4} \pi^{-n/2}  \int_{\CC} F(w)e^{-({\bar w}^2/2)}    \int_{\RR} e^{-2i\xi\cdot t} e^{2t \cdot \bar w-t^2}   dt\ d\lambda(w)\\
 = 2^{n\over4}   \pi^{-{3n\over4}}  \int_{\CC} F(w)e^{-{{\bar w}^2\over2}}   e^{(\bar w - i\xi)^2}  \int_{\RR} e^{- (t- (\bar w - i\xi))^2}   dt\ d\lambda(w).
\end{align*}
Recall that by a change of variables and standard calculus computations,
$$\int_{\RR} e^{- (t- (\bar w - i\xi))^2}   dt = \pi^{n\over2}.$$
We then have
\begin{align}\label{F-1BF}
\mathcal{F} B^{-1}F(\xi)
& = \left({2\over \pi}\right)^{n\over 4} \int_{\CC} F(w)e^{-{{\bar w}^2\over2}}   e^{(\bar w - i\xi)^2}  d\lambda(w)\nonumber\\
 &= \left({2\over \pi}\right)^{n\over 4}  e^{-\xi^2} \int_{\CC} F(w)e^{{{\bar w}^2\over2}}   e^{-2i\bar w \cdot \xi}  d\lambda(w).
\end{align}

Then, by taking the Bargmann transform of $\mathcal{F} B^{-1}F$ we get that
\begin{align*}
B \mathcal{F} B^{-1}F(z)
%&= \left({2\over \pi}\right)^{n\over 4}\int_{\RR} \mathcal{F} B^{-1}F(\xi)\ e^{2\xi\cdot z-\xi^2-(z^2/2)} d\xi \\
%&= \Big({2\over \pi}\Big)^{n\over 2}\int_{\RR}\  e^{-\xi^2} \int_{\CC} F(w)e^{({\bar w}^2/2)}   e^{-2i\bar w \cdot \xi}  d\lambda(w)\ e^{2\xi\cdot z-\xi^2-(z^2/2)} d\xi \\
%& =\Big({2\over \pi}\Big)^{n\over 2} e^{-(z^2/2)}\int_{\CC} F(w)e^{({\bar w}^2/2)}  \int_{\RR}\      e^{2\xi\cdot (z- i\bar w )-2\xi^2} d\xi \ d\lambda(w)\\\
& =\left({2\over \pi}\right)^{n/ 2} e^{-{z^2\over2}}\int_{\CC} F(w)e^{{{\bar w}^2\over2}}  e^{ (z-i\bar w)^2\over 2  } \int_{\RR}\      e^{-2(\xi-{z-i\bar w\over2})^2} d\xi \ d\lambda(w)\\
% \end{align*}
%By noting that
%$$\int_{\RR}\      e^{-2(\xi-{z-i\bar w\over2})^2} d\xi =\Big({\pi\over2}\Big)^{n\over2}, $$
%we get that
%\begin{align*}
%B \mathcal{F} B^{-1}F(z)
& =%\Big({\pi\over2}\Big)^{n\over2} \Big({2\over \pi}\Big)^{n\over 2}
e^{-{z^2\over2}}\int_{\CC} F(w)e^{{{\bar w}^2\over2}}  e^{ (z-i\bar w)^2\over 2  } \ d\lambda(w)\\
&= \int_{\CC} F(w) e^{ (-iz)\cdot\bar w  } \ d\lambda(w)\\
&= F(-iz),
 \end{align*}
where the last equality follows from the reproducing formula.

By repeating the above proof, we also have
\begin{align*}
B \mathcal{F}^{-1} B^{-1}F(z)= F(iz).
 \end{align*}
 The proof of Lemma \ref{prop2.2} is complete.
\end{proof}

%\bigskip
%
%\bigskip
%\bigskip
%\bigskip
%\bigskip
%\bigskip
%
%\bigskip
%\bigskip
%\bigskip
%\bigskip
%
%
%
%\bigskip
%\bigskip
%
%\bigskip
%\bigskip
%\bigskip
%\bigskip

%\medskip

\section{Proof of Theorem~\ref{th1.1}}
\setcounter{equation}{0}
\label{s:Proof}

In this section we provide the proof of our main result Theorem \ref{th1.1}.
To begin with, we need the following auxiliary result.
\begin{lemma}\label{le2.1}
For any $m\in L^\infty(\RR)$, the entire  function
$$
\varphi(z)=\int_{\RR} m(x)e^{-2(x-\frac{i}{2}z)^2} dx, \ \ \ \ \  z\in\CC
$$
belongs to ${\mathscr F}^2(\CC)$.
\end{lemma}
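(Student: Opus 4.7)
The plan is to recognize $\varphi$ as a constant multiple of the Bargmann transform of an explicit $L^2(\RR)$ function, precomposed with the rotation $z\mapsto iz$, and then invoke Lemmas \ref{le2.3} and \ref{prop2.2}.

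First I would expand the Gaussian exponent,
\begin{equation*}
-2\bigl(x-\tfrac{i}{2}z\bigr)^2 \;=\; -2x^2 + 2ix\cdot z + \tfrac{z^2}{2},
\end{equation*}
which lets me rewrite
\begin{equation*}
\varphi(z) \;=\; e^{z^2/2}\int_{\RR} m(x)\,e^{-2x^2+2ix\cdot z}\,dx.
\end{equation*}
Since $m\in L^\infty(\RR)$, the function $g(x):=m(x)\,e^{-x^2}$ belongs to $L^2(\RR)$, with $\|g\|_{L^2}^2\le (\pi/2)^{n/2}\|m\|_\infty^2$. Now I would compare the previous formula with the definition \eqref{e2.1} of $B$ evaluated at the complex point $w=iz$. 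Using the algebraic identity $-x^2-(x-iz)^2=-2x^2+2ix\cdot z+z^2$, one finds
\begin{equation*}
Bg(iz) \;=\; \Bigl(\tfrac{2}{\pi}\Bigr)^{n/4} e^{-z^2/2}\int_{\RR} m(x)\,e^{-2x^2+2ix\cdot z+z^2}\,dx \;=\; \Bigl(\tfrac{2}{\pi}\Bigr)^{n/4}\varphi(z),
\end{equation*}
so that $\varphi(z)=(\pi/2)^{n/4}\,Bg(iz)$ as entire functions on $\CC$. In particular $\varphi$ is automatically entire since $Bg$ is.

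The last step is to observe that the rotation map $F\mapsto F(i\,\cdot)$ preserves $\mathscr F^2(\CC)$ isometrically. This is exactly Lemma \ref{prop2.2}: the map equals $B\mathcal F^{-1}B^{-1}$, a composition of unitary operators. Combining with the norm identity $\|Bg\|_{\mathscr F^2(\CC)}=\|g\|_{L^2(\RR)}$ from Lemma \ref{le2.3} yields
\begin{equation*}
\|\varphi\|_{\mathscr F^2(\CC)} \;=\; \Bigl(\tfrac{\pi}{2}\Bigr)^{n/4}\|Bg\|_{\mathscr F^2(\CC)} \;=\; \Bigl(\tfrac{\pi}{2}\Bigr)^{n/4}\|g\|_{L^2(\RR)} \;\le\; \Bigl(\tfrac{\pi}{2}\Bigr)^{n/2}\|m\|_\infty,
\end{equation*}
which gives the claim. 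I do not foresee a serious obstacle here: the whole argument reduces to matching the two Gaussian exponents correctly. The only place requiring a little care is the computation $Bg(iz)=(2/\pi)^{n/4}\varphi(z)$, where one must substitute a purely imaginary argument into the Bargmann formula and track the $e^{z^2/2}$ factor so that the identification is exact rather than off by a Gaussian.
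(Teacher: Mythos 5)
Your argument is correct, and it reaches the conclusion by a genuinely different route than the paper. You identify $\varphi(z)=(\pi/2)^{n/4}\,Bg(iz)$ with $g(x)=m(x)e^{-x^2}\in L^2(\RR)$ (your exponent bookkeeping checks out: $-2(x-\tfrac{i}{2}z)^2=-2x^2+2ix\cdot z+\tfrac{z^2}{2}$ and $-\tfrac{(iz)^2}{2}=+\tfrac{z^2}{2}$), and then conclude via Lemma \ref{le2.3} (unitarity of $B$) together with Lemma \ref{prop2.2}, which exhibits the rotation $F\mapsto F(i\,\cdot)$ as the unitary $B\mathcal F^{-1}B^{-1}$; equivalently, $\varphi=(\pi/2)^{n/4}B(\mathcal F^{-1}g)$, a Bargmann transform of an explicit $L^2$ function, so membership in ${\mathscr F}^2(\CC)$ and the exact norm $\|\varphi\|_{{\mathscr F}^2(\CC)}=(\pi/2)^{n/4}\|g\|_{2}$ are immediate, and there is no circularity since Lemmas \ref{le2.3} and \ref{prop2.2} are established independently of this lemma. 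The paper instead proceeds by a self-contained direct computation: writing $z=u+iv$, recognizing $\varphi(u+iv)$ as $\pi^{n/2}e^{u^2/2}$ times an inverse Fourier transform of $m(\cdot-\tfrac{v}{2})e^{-2(\cdot)^2}$ evaluated at $u$, and then applying Plancherel in $u$ for each fixed $v$ and integrating in $v$. The two computations agree (a change of variables turns the paper's double integral into $(\pi/2)^{n/2}\|g\|_2^2$). What your route buys is structural clarity — it displays $\varphi$ as the Bargmann image of a concrete $L^2$ function, anticipating the mechanism of Lemma \ref{le3.3} and Theorem \ref{th1.1} — while the paper's slice-wise Plancherel argument is more elementary in that it does not invoke the rotation identity of Lemma \ref{prop2.2} at all; one could also note that the isometry of $F\mapsto F(i\,\cdot)$ follows even more simply from the rotation invariance of the Gaussian measure, without appealing to Lemma \ref{prop2.2}.
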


\begin{proof}
For every $z\in\CC$, we write  $z=u+iv$. Then we have
\begin{eqnarray*}
\varphi(z)
&=&\int_{\RR} m(x-\frac 12 v)e^{-2x^2+2iu\cdot x+\frac 12 u^2} dx 
= \pi^{n\over2} \mathcal{F}^{-1} \big[m(x-\frac 12 v)e^{-2x^2}\big](u) e^{\frac 12 u^2}.
\end{eqnarray*}
By Plancherel's  theorem,
\begin{eqnarray*}
||\varphi||_{{\mathscr F}^2(\CC)}^2
&=&\pi^{-n}  \int_{\CC} |\varphi(z)|^2 e^{-|z|^2}dz\\
&=& \int_{\RR} e^{-v^2} dv \int_{\RR} \Big|\mathcal{F}^{-1}\big[m(x-\frac 12 v)e^{-2x^2}\big](u)\Big|^2 du\\
&=&  \int_{\RR} e^{-v^2} dv \int_{\RR} \big|m(x-\frac 12 v)e^{-2x^2}\big|^2 dx\\
&\leq&  ||m||_{L^\infty}^2 \int_{\RR} e^{-v^2} dv \int_{\RR} e^{-4x^2} dx <\infty,
\end{eqnarray*}
and so $\varphi \in {\mathscr F}^2(\CC).$ This finishes the proof of Lemma~\ref{le2.1}.
\end{proof}

The proof of Theorem~\ref{th1.1} relies on the following elementary fact taken from harmonic analysis characterising the translation invariant operators that are bounded on $L^2(\RR)$.

\begin{prop}\label{prop3.1}
Let $T$ be a bounded linear transformation mapping $L^2(\RR)$ into itself. Then a necessary and sufficient condition that
$T$ commutes with translation  is that there exists a bounded measurable function $m(y)$ ( a ``multiplier") so that
$ {\mathcal F ({Tf}})(y)= m(y)  {\mathcal F {f}}(y)$ for all $f\in L^2(\RR)$.  In this case
the norm of $T : L^2(\RR)\to L^2(\RR)$  is equal to $\|m\|_{L^{\infty}}$.
\end{prop}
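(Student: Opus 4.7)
I would prove the two directions separately. Sufficiency is a one-line Plancherel computation. Necessity rests on the fact that, after conjugating by the Fourier transform, the translation group becomes the group of modulations, and a bounded operator commuting with all modulations must coincide with multiplication by an $L^\infty$ function. For sufficiency: if $\mathcal F(Tf)=m\cdot\mathcal Ff$ with $m\in L^\infty(\RR)$, Plancherel gives $\|Tf\|_2=\|m\mathcal Ff\|_2\le\|m\|_\infty\|f\|_2$; and since the paper's normalization yields $\mathcal F\tau_hf(\xi)=e^{-2ih\cdot\xi}\mathcal Ff(\xi)$ for $(\tau_hf)(y):=f(y-h)$, pointwise commutation of $m(\xi)$ with $e^{-2ih\cdot\xi}$ gives $T\tau_h=\tau_hT$.

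\textbf{Necessity: from translations to $L^\infty$-multipliers.} For necessity, set $\widetilde T:=\mathcal F T\mathcal F^{-1}$ and let $M_h$ denote multiplication by $e^{-2ih\cdot\xi}$; the identity $\mathcal F\tau_h=M_h\mathcal F$ transforms $T\tau_h=\tau_hT$ into $\widetilde T M_h=M_h\widetilde T$ for every $h\in\RR$. Applied to a fixed $g\in L^2(\RR)$ this reads $\widetilde T(e^{-2ih\cdot\xi}g)=e^{-2ih\cdot\xi}\widetilde Tg$, and extends by linearity to $\widetilde T(pg)=p\,\widetilde T g$ for trigonometric polynomials $p$. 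A Fourier-inversion argument---expressing a Schwartz function $\psi$ as a Bochner superposition of exponentials in $L^2$ against a fixed $g$, then pulling the bounded operator $\widetilde T$ inside the integral---extends this to all Schwartz $\psi$, after which a uniformly bounded pointwise-a.e.\ approximation together with dominated convergence yields
\begin{equation*}
\widetilde T(\psi g)=\psi\,\widetilde T g,\qquad \psi\in L^\infty(\RR),\ g\in L^2(\RR).
\end{equation*}

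\textbf{Extraction and norm of the multiplier.} For $N\ge 1$ let $B_N$ denote the ball of radius $N$ and set $m_N:=\widetilde T(\mathbf 1_{B_N})\in L^2(\RR)$. Choosing $\psi=\mathbf 1_{\RR\setminus B_N}$ with $g=\mathbf 1_{B_N}$ in the displayed identity forces $m_N$ to be supported in $B_N$; choosing $\psi=\mathbf 1_{B_N}$ with $g=\mathbf 1_{B_{N'}}$ for $N'>N$ gives $m_N=\mathbf 1_{B_N}m_{N'}$, so the $m_N$ patch consistently to a measurable function $m$ on $\RR$ with $m|_{B_N}=m_N$. For measurable $E\subset B_N$, setting $\psi=\mathbf 1_E$, $g=\mathbf 1_{B_N}$ yields $\widetilde T(\mathbf 1_E)=m\,\mathbf 1_E$; by linearity and the density of simple functions of bounded support in $L^2(\RR)$, this extends to $\widetilde T g=mg$ for every $g\in L^2(\RR)$. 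Boundedness of $\widetilde T$ then forces $m\in L^\infty(\RR)$ with $\|m\|_\infty=\|\widetilde T\|$, and since $\mathcal F$ is an isometry, $\|T\|=\|m\|_\infty$.

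\textbf{Main obstacle.} The delicate step is the density extension from commutation with the pure-exponential multipliers $M_h$ to commutation with arbitrary $L^\infty$-multipliers. The family $\{M_{p_n}\}$ need not converge in any operator topology that interacts cleanly with the unspecified bounded operator $\widetilde T$, so the limit must be passed \emph{inside} the vector identity $\widetilde T(p_ng)=p_n\widetilde T g$ for a fixed $g\in L^2(\RR)$, with dominated convergence delivering the $L^2$-convergence on both sides. Once this density step is in place, the patching of the $m_N$'s and the identification $\|T\|=\|m\|_\infty$ follow by unwinding definitions.
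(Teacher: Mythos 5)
Your proposal is correct in substance, but it takes a different route from the paper in a trivial sense: the paper does not prove Proposition \ref{prop3.1} at all, it simply cites \cite[Proposition 2, Chapter 2]{St}. What you supply is a self-contained argument: sufficiency by Plancherel (the paper's normalization ${\mathcal F}f(x)=\pi^{-n/2}\int e^{-2ix\cdot y}f(y)\,dy$ is indeed unitary and satisfies ${\mathcal F}\tau_h=M_h{\mathcal F}$ with $M_h$ multiplication by $e^{-2ih\cdot\xi}$), and necessity by conjugating with ${\mathcal F}$ and showing that a bounded operator commuting with all modulations is multiplication by an $L^\infty$ function --- essentially the statement that the multiplication algebra $L^\infty$ acting on $L^2$ is its own commutant. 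This is a legitimate alternative to the convolution-kernel argument in Stein, and the Bochner-integral step (writing a Schwartz $\psi$ as an $L^1$ superposition of exponentials and pulling the bounded $\widetilde T$ inside) together with the uniformly bounded a.e.\ approximation of a general $\psi\in L^\infty$ is sound. One step deserves slightly more care than you give it: in the final extension from $\widetilde T(\mathbf 1_E)=m\,\mathbf 1_E$ to $\widetilde Tg=mg$ for all $g\in L^2$, you cannot invoke density naively, because you do not yet know that multiplication by $m$ is bounded. Either first extract $\|m\|_{L^\infty}\le\|\widetilde T\|$ directly from the simple-function identity (for $E_\epsilon=\{x\in B_N:|m(x)|>\|\widetilde T\|+\epsilon\}$ one gets $(\|\widetilde T\|+\epsilon)^2|E_\epsilon|\le\int_{E_\epsilon}|m|^2\le\|\widetilde T\|^2|E_\epsilon|$, so $|E_\epsilon|=0$), and only then pass to the limit; or approximate $g$ by simple functions $s_k\to g$ both in $L^2$ and a.e.\ with $|s_k|\le|g|$, and identify the a.e.\ limit of a subsequence of $m s_k=\widetilde Ts_k$ with $\widetilde Tg$. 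With that repaired, the norm identity follows from the two inequalities $\|\widetilde T\|\le\|m\|_{L^\infty}$ (Plancherel) and $\|m\|_{L^\infty}\le\|\widetilde T\|$ (test sets), and $\|T\|=\|\widetilde T\|$ by unitarity of ${\mathcal F}$, exactly as you assert.
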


\begin{proof}
For the proof of this proposition see \cite[Proposition 2, Chapter 2]{St}.
\end{proof}

For more information on the translation invariant operators, we refer to \cite{Ho} and \cite[Chapter 1]{SW}.  In the following  we  denote by ${\mathscr M}^{2,2}(\RR)$ the set of all bounded linear operators on $L^2(\RR)$  that commute with translations.

\medskip

Recall that the operators $B$ and $B^{-1}$ are the Bargmann transform in \eqref{e2.1} and the inverse Bargmann transform in \eqref{e22.1}, respectively.
For every bounded operator  $S_\varphi$ in \eqref{e1.1}  on the space ${\mathscr F}^2(\CC)$, consider the operator
\begin{eqnarray}\label{tt}
 T= B^{-1} S_\varphi B.
\end{eqnarray}
A crucial observation is that the above operator $ T$   commutes with translation so that we can apply Proposition~\ref{prop3.1} in the proof of Theorem~\ref{th1.1}.  To be precise, we first  have the following result.

\begin{lemma}\label{le3.2}
If the integral operator $S_\varphi$ in \eqref{e1.1}  is bounded on ${\mathscr F}^2(\CC)$, then there exists an operator  $T\in\mathscr {M}^{2,2}(\RR)$ such that
\begin{eqnarray}\label{e3.1}
	S_{\varphi}F(z)=BTB^{-1}F(z),
\end{eqnarray}
for $F\in {\mathscr F}^2(\CC)$ and $z\in \CC.$  Moreover, there exists a bounded measurable function $m(y)$   so that
${\mathcal F ({Tf}})(y)= m(y)  {\mathcal F {f}}(y)$ for all $f\in L^2(\RR)$.
\end{lemma}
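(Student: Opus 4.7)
The plan is to take $T := B^{-1}S_\varphi B$ as the very definition and read off (\ref{e3.1}) for free from the unitarity of $B$, leaving the real content of the lemma: verifying that $T$ commutes with translations on $L^2(\mathbb{R}^n)$. Once that is in hand, Proposition~\ref{prop3.1} immediately produces the multiplier $m\in L^\infty(\mathbb{R}^n)$ with $\mathcal{F}(Tf)(y)=m(y)\mathcal{F}f(y)$.

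The standard trick is to transport the translation action to the Fock side: $T\tau_a=\tau_a T$ if and only if $S_\varphi W_a=W_a S_\varphi$, where $W_a:=B\tau_a B^{-1}$ and $a\in\mathbb{R}^n$. So the first computation I would do is to evaluate $W_a$ explicitly. Using the Gaussian form $Bf(z)=(2/\pi)^{n/4}e^{z^2/2}\int_{\mathbb{R}^n}f(x)e^{-(x-z)^2}\,dx$ from (\ref{e2.1}) and the substitution $x\mapsto x+a$, one compares with $Bf(z-a)$ and finds
\begin{equation*}
W_a F(z)=e^{az-\frac{a^2}{2}}F(z-a),\qquad z\in\mathbb{C}^n,\ a\in\mathbb{R}^n.
\end{equation*}
This is just the familiar Fock-space Weyl translation. (Note that $W_a$ maps $\mathscr{F}^2(\mathbb{C}^n)$ isometrically onto itself, as it must, being unitarily conjugate to a translation on $L^2(\mathbb{R}^n)$.)

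The bulk of the proof is then the direct verification that $S_\varphi W_a=W_a S_\varphi$ on $\mathscr{F}^2(\mathbb{C}^n)$. Starting from
\begin{equation*}
S_\varphi W_a F(z)=\int_{\mathbb{C}^n}e^{a w-\frac{a^2}{2}}F(w-a)\,e^{z\bar w}\varphi(z-\bar w)\,d\lambda(w),
\end{equation*}
I would make the change of variables $w\mapsto w+a$, using that $a\in\mathbb{R}^n$ so $|w+a|^2=|w|^2+a(w+\bar w)+a^2$ and $d\lambda(w+a)=\pi^{-n}e^{-|w|^2-a(w+\bar w)-a^2}dw$. Collecting the exponentials, the $a(w+\bar w)$ terms cancel against the holomorphic $aw$ factor, and one is left with $e^{az-\frac{a^2}{2}}\int F(w)e^{(z-a)\bar w}\varphi((z-a)-\bar w)\,d\lambda(w)=W_a S_\varphi F(z)$. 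This cancellation is the whole point: it works precisely because the kernel in (\ref{kernel K_T}) factors as the reproducing kernel $e^{z\bar w}$ times a function of the \emph{difference} $z-\bar w$, so $S_\varphi$ is the Fock-side analogue of a convolution operator.

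Having established $S_\varphi W_a=W_a S_\varphi$ for every $a\in\mathbb{R}^n$, it follows that $T\tau_a=\tau_a T$ on $L^2(\mathbb{R}^n)$ for every $a\in\mathbb{R}^n$; that is, $T\in\mathscr{M}^{2,2}(\mathbb{R}^n)$. Proposition~\ref{prop3.1} then furnishes the multiplier $m\in L^\infty(\mathbb{R}^n)$ with $\|m\|_{L^\infty}=\|T\|_{L^2\to L^2}=\|S_\varphi\|_{\mathscr{F}^2\to\mathscr{F}^2}$, which closes the lemma and, as a bonus, already delivers the norm identity announced in Theorem~\ref{th1.1}. The main obstacle is essentially bookkeeping in the change-of-variables step above, which is where one uses both the real-variable nature of $a$ and the translation-difference structure of the kernel; no analytic subtlety beyond that is required.
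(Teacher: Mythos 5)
Your proposal is correct and follows essentially the same route as the paper: define $T=B^{-1}S_\varphi B$, compute $W_a=B\tau_aB^{-1}$ explicitly as $W_aF(z)=e^{z\cdot a-\frac{a^2}{2}}F(z-a)$, verify $S_\varphi W_a=W_aS_\varphi$ by a change of variables exploiting the $\varphi(z-\bar w)$ structure of the kernel, and then invoke Proposition~\ref{prop3.1}. The only cosmetic difference is that the paper expands both $W_aS_\varphi$ and $S_\varphi W_a$ separately and compares, while you transform one into the other directly.
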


\begin{proof} Let  $T$  be the operator given  in \eqref{tt}.  Then the operator   $T$
  is bounded on $L^2(\RR) $
since  the  Bargmann transform $B$ is unitary operator from $L^2(\RR)$ to ${\mathscr F}^2(\CC)$
%on $\CC$,
and $S_\varphi$ in \eqref{e1.1}  is bounded on ${\mathscr F}^2(\CC)$.

Let us show that $T$ commutes with translation.  To do so, let $\tau_a$ denote the translation operator by $a\in\mathbb R^n$ that acts on a function $f$ as
$$
\left(\tau_af\right)(x)=f(x-a). %, \ \ \ \ \  a\in \RR.
$$
By the definition of the operators $B$ and $B^{-1}$,
$$
B\tau_{a}B^{-1} (F)(z)=F(z-a)e^{z\cdot a-{a^2\over2}}=:W_aF(z).
$$
Then we have
\begin{eqnarray}\label{e3.2}
 \tau_aT&=&B^{-1} \left(B\tau_{a}B^{-1} \right)   S_\varphi B = B^{-1}W_aS_{\varphi}B
 \end{eqnarray}
 and
\begin{eqnarray}\label{e3.3}
 T\tau_a&=&B^{-1} S_\varphi \left( B\tau_a B^{-1} \right) B = B^{-1}S_{\varphi}W_aB.
\end{eqnarray}
 A  straightforward calculation shows that
\begin{eqnarray*}
W_aS_{\varphi}F(z)
%&=&W_a\int_{\CC} f(w) e^{z\bar{w}} \varphi(z-\bar{w}) d\lambda(w)\\
&=&\int_{\CC} F(w) e^{(z-a)\cdot \bar{w}} \varphi((z-a)-\bar{w}) e^{z\cdot a-{a^2\over2}} d\lambda(w)\\
&=& \pi^{-n}  e^{-{a^2\over2}} \int_{\CC} F(w)\varphi((z-a)-\bar{w}) e^{(z-a)\cdot \bar{w}+z\cdot a-|w|^2} dw\\
&=&  \pi^{-n}  e^{-{a^2\over2}} \int_{\CC} F(u-a)\varphi(z-\bar{u}) e^{(z-a)\cdot (\bar{u}-a)+z\cdot a-(u-a)\cdot (\bar{u}-a)} du\\
&=&   e^{-{a^2\over2}}  \int_{\CC} F(u-a)\varphi(z-\bar{u}) e^{u\cdot a+z\cdot \bar{u}} d\lambda(u),
\end{eqnarray*}
and
\begin{eqnarray*}
S_{\varphi}W_aF(z)
&=&\int_{\CC} F(w-a) e^{w\cdot a-{a^2\over2}} e^{z\cdot \bar{w}} \varphi(z-\bar{w})\, d\lambda(w)\\
&=&  e^{-{a^2\over2}}\int_{\mathbb{C}} F(w-a)\varphi(z-\bar{w}) e^{w\cdot a+z\cdot \bar{w}} d\lambda(w),
\end{eqnarray*}
and so $W_aS_{\varphi}=S_{\varphi}W_a.$
This, in combination with \eqref{e3.2} and \eqref{e3.3}, shows that $T$ commutes with translation, and so  $T \in {\mathscr M}^{2,2}(\RR)$.  By Proposition~\ref{prop3.1}, there exists a bounded measurable function $m(y)$   so that
$ {\mathcal F ({Tf}})(y)= m(y)  {\mathcal F {f}}(y)$ for all $f\in L^2(\RR)$.
 The proof of Lemma~\ref{le3.2} is complete.
\end{proof}

Further, we have the following result.

\begin{lemma}\label{le3.3}
If $T \in {\mathscr M}^{2,2}(\RR)$ is given by convolution such that
 $ {\mathcal F ({Tf}})(y)= m(y)  {\mathcal F {f}}(y)$ with an $L^{\infty}(\RR)$ function $m$ and for all $f\in L^2(\RR)$, then for every $ F\in{\mathscr F}^2(\CC)$,
\begin{eqnarray}\label{e3.4}
	BTB^{-1}F(z)=  \left({2\over \pi}\right)^{n\over2} \int_{\CC} F(w)e^{z\cdot \bar{w}}
\left(\int_{\RR} m(x)e^{-2(x-\frac{i}{2}(z-\bar{w}))^2} dx\right) d\lambda(w),  \ \ \ z\in\CC.
\end{eqnarray}
%where   $m$ is an $L^{\infty}$ function of the Fourier transform of $u$.
\end{lemma}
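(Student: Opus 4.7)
\medskip

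\noindent\textbf{Proof plan.} The plan is to identify the integral kernel of $R:=BTB^{-1}$ directly, avoiding any Fubini/density subtleties. By Proposition \ref{prop Folland}, the kernel is determined by $K_R(z,\bar w)=R\,K(\cdot,\bar w)(z)$, and Proposition \ref{prop Folland}(c) then gives $RF(z)=\int_{\CC} F(w)K_R(z,\bar w)\,d\lambda(w)$, which is precisely the shape of \eqref{e3.4}. Evaluating $R$ on reproducing kernels is convenient because $B^{-1}K(\cdot,\bar w)$ is a genuine Gaussian on $\RR$, so every integral along the chain $B^{-1}\to T\to B$ is absolutely convergent and the Fourier-multiplier representation of $T$ can be applied in closed form.

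The computation would proceed through a sequence of explicit Gaussian integrals. From \eqref{e22.1} and the reproducing formula $K(w,\bar u)=e^{w\cdot\bar u}$ one derives $B^{-1}K(\cdot,\bar u)(x)=(2/\pi)^{n/4}e^{2x\cdot\bar u-x^2-\bar u^2/2}$; completing the square in the defining Fourier integral then gives $\mathcal{F}(B^{-1}K(\cdot,\bar u))(y)=(2/\pi)^{n/4}e^{\bar u^2/2-2i\bar u\cdot y-y^2}$, so the identity $\mathcal{F}(Tf)=m\,\mathcal{F}f$ expresses $TB^{-1}K(\cdot,\bar u)$ as the inverse Fourier transform of a Gaussian times $m$. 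Applying $B$ via \eqref{e2.1}, together with the Parseval-type duality $\int(\mathcal{F}^{-1}h)(x)g(x)\,dx=\int h(y)(\mathcal{F}^{-1}g)(y)\,dy$ applied to $g_z(x):=e^{2x\cdot z-x^2-z^2/2}$ (for which another Gaussian square-completion yields $\mathcal{F}^{-1}g_z(y)=e^{z^2/2+2iz\cdot y-y^2}$), I would arrive at
\[
BTB^{-1}K(\cdot,\bar u)(z)\;=\;\Bigl(\tfrac{2}{\pi}\Bigr)^{n/2}e^{\bar u^2/2+z^2/2}\int_{\RR}m(y)\,e^{-2y^2+2iy\cdot(z-\bar u)}\,dy.
\]

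A final square-completion in $y$ rewrites the $y$-exponent as $-2(y-\tfrac{i}{2}(z-\bar u))^2-\tfrac{1}{2}(z-\bar u)^2$, and the algebraic collapse $\tfrac{\bar u^2}{2}+\tfrac{z^2}{2}-\tfrac{(z-\bar u)^2}{2}=z\cdot\bar u$ factors out $e^{z\cdot\bar u}$, yielding
\[
K_R(z,\bar u)\;=\;e^{z\cdot\bar u}\cdot\Bigl(\tfrac{2}{\pi}\Bigr)^{n/2}\int_{\RR}m(x)\,e^{-2(x-\tfrac{i}{2}(z-\bar u))^2}\,dx,
\]
which is exactly the kernel in \eqref{e3.4}; Proposition \ref{prop Folland}(c) then finishes the proof. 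The step I expect to require the most care is purely bookkeeping --- tracking the four Gaussian square-completions and the pair of $(2/\pi)^{n/4}$ factors from $B$ and $B^{-1}$ --- rather than any deep analytic obstacle, since the key structural point (that $B^{-1}$ sends reproducing kernels to Gaussians, on which a bounded Fourier multiplier acts in closed form) is what makes every integral along the chain explicit.
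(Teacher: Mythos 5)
Your proposal is correct, but it takes a genuinely different route from the paper's. You compute the kernel of $R=BTB^{-1}$ by letting it act on the reproducing kernels $K(\cdot,\bar u)$ — where $B^{-1}K(\cdot,\bar u)$ is an explicit Gaussian, so the multiplier acts in closed form and every integral (including your multiplication-formula step $\int(\mathcal F^{-1}h)\,g=\int h\,(\mathcal F^{-1}g)$) is absolutely convergent — and then you invoke Proposition \ref{prop Folland}(c), legitimately since $B$ is unitary and $T$ is bounded, to pass from the kernel identity to \eqref{e3.4} for all $F\in\mathscr F^2(\CC)$. The paper instead works with a general $F$ throughout: it uses Lemma \ref{prop2.2} (the identities $B\mathcal F B^{-1}F(z)=F(-iz)$ and $B\mathcal F^{-1}B^{-1}F(z)=F(iz)$) to write $BTB^{-1}F = B(m\mathcal F B^{-1}F)(i\,\cdot)$, and performs the same Gaussian square-completions inside an iterated integral over $\CC\times\RR$, with an implicit interchange of integrals for general $F$. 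Your variant buys cleaner analysis (all Fubini-type steps happen on Gaussians, and the extension to arbitrary $F$ comes for free from the kernel representation theorem), at the mild cost of invoking Proposition \ref{prop Folland}; the paper's variant is self-contained in that respect and reuses Lemma \ref{prop2.2}, which it needs elsewhere anyway. I checked your intermediate formulas — $B^{-1}K(\cdot,\bar u)(x)=(2/\pi)^{n/4}e^{2x\cdot\bar u-x^2-\bar u^2/2}$, its Fourier transform, $\mathcal F^{-1}g_z(y)=e^{z^2/2+2iz\cdot y-y^2}$, and the final collapse $\tfrac{\bar u^2}{2}+\tfrac{z^2}{2}-\tfrac{(z-\bar u)^2}{2}=z\cdot\bar u$ — and they are all consistent with the paper's normalization of $\mathcal F$, $B$ and $B^{-1}$, yielding exactly the kernel $e^{z\cdot\bar u}\varphi_0(z-\bar u)$ of \eqref{e3.4}.
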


\begin{proof}
By Lemma~\ref{prop2.2},
$$
(B\mathcal{F}B^{-1}F)(z)=F(-iz).
$$
This gives
\begin{eqnarray*}
(\mathcal{F}B^{-1}F)(x)
&=&B^{-1}(B\mathcal{F}B^{-1}F)(x)
= \left({2\over \pi}\right)^{n\over 4}\int_{\CC} F(w) e^{-2ix\cdot \bar{w}-x^2+{(\bar{w})^2\over2}} d\lambda(w),
\end{eqnarray*}
and so
\begin{eqnarray*}
B(m\mathcal{F}B^{-1}F)(z)
&=& \left({2\over \pi}\right)^{n\over 4}\int_{\RR}  m(x) \Big( \mathcal{F}B^{-1}F\Big)(x)   e^{2x\cdot z -x^2 -{z^2\over2}} dx \\
&= & \left({2\over \pi}\right)^{n\over 2} \int_{\CC} F(w)e^{-iz\cdot \bar{w}} \int_{\RR} m(x)e^{A(x,z,w)} dxd\lambda(w),
\end{eqnarray*}
where
\begin{eqnarray*}
A(x,z,w)
&= &-2x^2-{z^2\over2}+2x\cdot z+{\bar{w}^2\over2}-2ix\cdot \bar{w}+iz\cdot \bar{w}\\
&= &-2x^2+2x\cdot (z-i\bar{w})-{(z-i\bar{w})^2\over2}\\
&= &-2\left(x-\frac{z-i\bar{w}}{2}\right)^2.
\end{eqnarray*}
By Lemma~\ref{prop2.2} again,
$$
(B\mathcal{F}^{-1}B^{-1}F)(z)=F(iz).
$$
Therefore,
\begin{eqnarray*}
BTB^{-1}F(z)
&= &(B{\mathcal F}^{-1}(m{\mathcal F}B^{-1}F))(z)
= (B{\mathcal F}^{-1}B^{-1})B(m{\mathcal F}B^{-1}F)(z)
= B(m\mathcal{F}B^{-1}F)(iz)\\
&= &\left({2\over \pi}\right)^{n\over 2}  \int_{\CC} F(w)e^{z\cdot \bar{w}} \left(\int_{\RR} m(x)e^{-2(x-\frac{i}{2}(z-\bar{w}))^2} dx\right) d\lambda(w).
\end{eqnarray*}
The proof of Lemma~\ref{le3.3} is complete.
\end{proof}

\medskip
Now we are ready to prove our main result, Theorem~\ref{th1.1}.
\begin{proof}[Proof of Theorem~\ref{th1.1}]
Assume that  the   operator $S_\varphi$ in \eqref{e1.1}  is bounded on ${\mathscr F}^2(\CC)$.  Let us show that
there exists an $m\in L^\infty(\RR)$
such that \eqref{e1.2} holds.
 Indeed, it follows from Lemma~\ref{le3.2} and Lemma~\ref{le3.3} that there exists an $L^{\infty}(\RR)$ function $m$ such that
for every $ z\in \CC$,
\begin{eqnarray}\label{bbb}
S_\varphi(F)(z)&=& BTB^{-1}(F)(z)\nonumber\\
&=&  \left({2\over \pi}\right)^{n\over 2} \int_{\CC} F(w)e^{z\cdot \bar{w}}
\left(\int_{\RR} m(x)e^{-2\big(x-\frac{i}{2}(z-\bar{w})\big)^2} dx\right) d\lambda(w).
\end{eqnarray}
Define
\begin{eqnarray}\label{e3.5}
\varphi_0(z)=\left({2\over \pi}\right)^{n\over2}
 \int_{\RR} m(x)e^{-2(x-\frac{i}{2}z)^2} dx.
\end{eqnarray}
By Lemma~\ref{le2.1}, we have that
$\varphi_0\in {\mathscr F}^2(\C^n)$.

Let  $\varphi$ be an entire function in \eqref{e1.1}.
We now  show that $\varphi=\varphi_0$.
Indeed, we take  $z=0$ in \eqref{e1.1} and \eqref{bbb} to see that for all $F\in {\mathscr F}^2(\CC)$
\begin{eqnarray}\label{zz}
	\int_{\mathbb{C}^n} F(w)\Big(\varphi(-\bar{w})-\varphi_0(-\bar{w})\Big)\, d\lambda(w) =0.
\end{eqnarray}
Notice that  $\psi(w)=\varphi(-w)-\varphi_0(-w) \in \mathscr{F}^2(\mathbb{C}^n)$. From the standard orthonormal basis $\{e_{\alpha}(z)\}_{\alpha} $ for ${\mathscr{F}^2(\mathbb{C}^n)}$,  we decompose $\psi$ into the series
$$
	\psi(w)=\sum\limits_{\alpha} c_{\alpha} e_{\alpha} (w) =  \sum\limits_{\alpha} c_{\alpha}\left(  {1\over \alpha! }\right)^{1\over2}\, w^{\alpha },
$$
with  $\sum_{\alpha} |c_{\alpha}|^2 = \|\psi\|^2_{{\mathscr F}^2(\CC)}$.  Define
$$
	 \Psi(w) = \sum\limits_{\alpha} \bar{c}_{\alpha} \left(  {1\over \alpha! }\right)^{1\over2}\, w^{\alpha },
$$
where  $\bar{c}_{\alpha}$ is  the complex conjugate of ${c}_{\alpha}$, so that $\psi({\bar w} )={\overline {\Psi({  w} )}}$ and $\|\psi\|^2_{{\mathscr F}^2(\CC)}=\|\Psi\|^2_{{\mathscr F}^2(\CC)}$.  By   \eqref{zz},
\begin{eqnarray}\label{zzz}
	0= \int_{\mathbb{C}^n} F(w)\psi({\bar w} ) d\lambda(w) = \int_{\mathbb{C}^n} F(w) {\overline {\Psi({  w} )}} d\lambda(w).
\end{eqnarray}
Letting $F=\Psi$ in \eqref{zzz}, we see  that  ${\Psi({  w} )}=0 $ for all $w\in \CC$, and so $\psi(w)=0$.    Hence,
 $$
 	\varphi(z)=\varphi_0(z)=\left( {2\over\pi}\right)^{n\over2} \int_{\RR} m(x)e^{-2(x-\frac{i}{2}z)^2} dx
 $$
as desired.

%Indeed, we take  $z=0$ in \eqref{e1.1} and \eqref{bbb} to see that for all $F\in {\mathscr F}^2(\CC)$
%$$
%\int_{\CC} F(w) \, \overline{\big(\varphi(-w)- \varphi_0(-w)\big)} d\lambda(w)= 0.
%$$
%Letting $F(w)=\varphi(-w)- \varphi_0(-w)$, we see  that  $\varphi(-w)- \varphi_0(-w)=0 $ for all $w\in \CC$.  Hence,
% $$
% 	\varphi(z)=\varphi_0(z)=\left( {2\over\pi}\right)^{n\over2} \int_{\RR} m(x)e^{-2(x-\frac{i}{2}z)^2} dx
% $$
%as desired.

Next,   assume
  that \eqref{e1.2} holds for  some $m\in L^\infty(\RR)$. Then Lemma \ref{le2.1} shows that
 the function $\varphi$ as in \eqref{e1.2} is an entire function in ${\mathscr F}^2(\CC)$.
 For the   operator $S_\varphi$ in \eqref{e1.1}, we apply Lemma~\ref{le3.3}
 to obtain
  \begin{eqnarray*}
  S_\varphi=BTB^{-1},
  \end{eqnarray*}
  where $T \in {\mathscr M}^{2,2}(\RR)$ is given by convolution
   such that
 $({\mathcal F {Tf}})(y)= m(y)  {\mathcal F {f}}(y)$ for an $L^{\infty}(\RR)$ function $m$ and for all $f\in L^2(\RR)$.  From the properties of the operators $B$ and $B^{-1}$,
  the operator $  S_\varphi$
   is bounded on the space ${\mathscr F}^2(\CC)$.

To conclude, we point out that by using $S_\varphi = B T B^{-1} $, one obtains
\begin{align*}
\|S_\varphi\|_{{\mathscr F}^2(\CC)\to{\mathscr F}^2(\CC)} &= \| B T B^{-1}\|_{{\mathscr F}^2(\CC)\to{\mathscr F}^2(\CC)}
 = \|T\|_{L^2(\mathbb R^n)\to L^2(\mathbb R^n)}= \|m\|_{L^\infty(\mathbb R^n)}.
\end{align*}
 The proof of Theorem~\ref{th1.1} is complete.
\end{proof}

%We now provide the proof of Corollary \ref{cor1.2}.
%\begin{proof}[Proof of Corollary \ref{cor1.2}]
%\end{proof}
%
%\begin{proof}[Proof of Theorem \ref{th1.3}]
%Assume $T\in \mathcal M^{2,2}$ with $\widehat {Tf}(y) =m(y)\hat f(y)$. Then
%$S_\varphi = BTB^{-1}$. We see that
%$$ S^*_\varphi = (B^{-1})^* T^* B^* = BT^* B^{-1} $$
%since $B$ is unitary.
%
%It is not difficult to check that $\widehat{ T^* f}(y) = \overline{m(y)} \hat f(y)$. Thus we have
%$ S^*_\varphi  = S_{\tilde\varphi},   $ where
%$$\tilde \varphi (z) = \int_{\mathbb R^n} \overline{m(x)} e^{ -2(x- {i\over2}z)^2 } dx.$$
%Since  $T$ is normal, so does $S_\varphi$.
%
%The proof of Theorem \ref{th1.3} is complete.
%\end{proof}
%
%\begin{proof}[Proof of Corollary \ref{cor1.4}]
%It is routine. We see that if $m$ is real, then $S_\varphi $ is self-adjoint. If $m$ takes value in  pure imaginary
%numbers, then $S_\varphi$ is anti-selfadjoint, that is, $S_\varphi^* = -S_\varphi$.
%
%This finishes the proof of Corollary \ref{cor1.4}.
%\end{proof}

\begin{rem}\label{rem3.5}
From   \cite[Proposition 2]{Z1}, we know that when $n=1$,
 a necessary condition for $S_{\varphi}$ to be
bounded on ${\mathscr F}^2(\mathbb C)$ is that $\varphi(z- {\bar z})$ is bounded. In other words, the  boundedness of
$S_{\varphi}$ implies that the function $\varphi$ is bounded on the imaginary axis.    However, this is not a sufficient condition, showing that the reproducing kernel thesis fails for this problem.
Indeed, we consider
$$
	\varphi(z) = \int_{\mathbb{R}} \psi(x) e^{-2(x-\frac{i}{2}z)^2} dx,
$$
where $\psi(x)$ belongs to $L^4(\R)\backslash L^\infty(\R)$. 
Following the proof of Lemma~\ref{le2.1}, we  can see $\varphi\in  {\mathscr F}^2(\mathbb C) $, hence $\varphi$ can gives rise to the operator $S_\varphi$. H\"older's inequality shows that $\varphi(z-\bar{z})$ is bounded on the imaginary axis.
But it can not be given by
$$
	\varphi(z) = \int_{\mathbb{R}} m(x) e^{-2(x-\frac{i}{2}z)^2} dx
$$
for any bounded function $m$. If this were possible, then there would exist a bounded function $m$ such that $\varphi$ has the above representation.
Then for all $z$,
$$
	\int_{\mathbb{R}} (\psi(x)-m(x)) e^{-2(x-\frac{i}{2}z)^2} dx = 0.
$$
Set $z=u$ to be an arbitrary real number, then it becomes
$$
	\int_{\mathbb{R}} (\psi(x)-m(x)) e^{-2x^2+2xiu} dx = 0,
$$
which means $\mathcal{F}^{-1}[(\psi(x)-m(x)) e^{-2x^2}](u)=0$. Since $(\psi(x)-m(x)) e^{-2x^2}$ is an $L^2$ function, then we have $\psi(x)=m(x)$,
%
%Then for all $z$,
%$$
%	\int_{\mathbb{R}} (\psi(x)-m(x)) e^{-2(x-\frac{i}{2}z)^2} dx = 0.
%$$
%Differentiating in $z$ and then setting $z=0$, it follows that
%$$
%	\int_{\mathbb{R}} (\psi(x)-m(x)) e^{-2x^2} x^k dx = 0,
%$$
%where $k$ is any positive integer. This easily implies that $\psi(x)=m(x)$,
%
which is a contradiction.
Therefore, by the theorem $S_\varphi$ is not bounded on ${\mathscr F}^2(\mathbb C)$, although $\varphi$ is bounded on the imaginary axis.
\end{rem}

From Theorem \ref{th1.1}, we see that from the multiplier function $m$ we obtain the analytic function $\varphi$. We now show how $\varphi$ gives rise to $m$.
\begin{prop}\label{prop reverse}
 Suppose $\varphi \in\mathscr F^2(\mathbb C^n)$ such that $S_\varphi$ is bounded on $\mathscr F^2(\mathbb C^n)$.  Then for  $Tf:=B^{-1}S_{\varphi}Bf$, $f\in L^2(\mathbb R^n)$, we have
 $\mathcal F (Tf)(x) = m(x)\mathcal F f(x)$ with
$$m(x)=\int_{\mathbb{C}^n} \int_{\mathbb{C}^n}  \varphi(z-\bar{w}) e^{z\cdot\bar{w}-2ix\cdot\bar{z}+\frac{\bar{z}^2}{2}}dwdz.$$
\end{prop}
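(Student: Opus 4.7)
The plan is to exploit Lemma~\ref{le3.2}, which tells us that $T = B^{-1}S_\varphi B$ commutes with translations and is therefore a Fourier multiplier with symbol $m \in L^\infty(\mathbb{R}^n)$, so that $\mathcal{F}(Tf)(x) = m(x)\,\mathcal{F}f(x)$ for every $f \in L^2(\mathbb{R}^n)$. To extract $m$ explicitly, I will choose a single test function $f$ whose Fourier transform is nonvanishing and compute both sides in closed form. A particularly convenient choice is $f = B^{-1}(1)$, where $1$ denotes the constant function in $\mathscr{F}^2(\mathbb{C}^n)$; a direct Gaussian integration shows that $f(y) = \left({2\over \pi}\right)^{n\over 4} e^{-y^2}$, and its Fourier transform $\mathcal{F}f(x) = \left({2\over \pi}\right)^{n\over 4} e^{-x^2}$ vanishes nowhere.

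With this choice $Bf = 1$, so $Tf = B^{-1}(S_\varphi 1)$. The crucial simplification is the identity $S_\varphi 1 = \varphi$: since $1 = K(\cdot, 0)$ in $\mathscr{F}^2(\mathbb{C}^n)$, Proposition~\ref{prop Folland} combined with the explicit kernel form $e^{z\bar w}\varphi(z-\bar w)$ from \eqref{kernel K_T} gives
$$S_\varphi 1(z) = \bigl(S_\varphi K(\cdot, 0)\bigr)(z) = \bigl(e^{z\bar w}\varphi(z-\bar w)\bigr)\Big|_{w=0} = \varphi(z).$$
Plugging this into $Tf = B^{-1}\varphi$ and invoking formula~\eqref{F-1BF} from the proof of Lemma~\ref{prop2.2}, we obtain
$$\mathcal{F}(Tf)(x) = \mathcal{F}B^{-1}\varphi(x) = \left({2\over \pi}\right)^{n\over 4} e^{-x^2}\int_{\mathbb{C}^n} \varphi(z)\,e^{\bar z^2/2 - 2ix\bar z}\,d\lambda(z).$$
Equating this with $m(x)\mathcal{F}f(x) = m(x)\left({2\over \pi}\right)^{n\over 4}e^{-x^2}$ and cancelling the common prefactor yields the single-integral identity
$$m(x) = \int_{\mathbb{C}^n}\varphi(z)\,e^{\bar z^2/2 - 2ix\bar z}\,d\lambda(z).$$

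The double-integral expression stated in the proposition is then obtained by re-inserting $\varphi(z) = S_\varphi 1(z) = \int_{\mathbb{C}^n}\varphi(z-\bar w)\,e^{z\bar w}\,d\lambda(w)$ into this single integral and interchanging the order of integration via Fubini, producing
$$m(x) = \int_{\mathbb{C}^n}\int_{\mathbb{C}^n}\varphi(z-\bar w)\,e^{z\bar w - 2ix\bar z + \bar z^2/2}\,d\lambda(w)\,d\lambda(z),$$
matching the claimed formula (with the natural Gaussian measures implicit in $dw\,dz$). The main technical point is justifying the interchange, which follows from the Gaussian weights built into $d\lambda(w)\,d\lambda(z)$, the membership $\varphi \in \mathscr{F}^2(\mathbb{C}^n)$, and the pointwise reproducing-kernel bound $|\varphi(\zeta)| \le e^{|\zeta|^2/2}\|\varphi\|_{\mathscr{F}^2(\mathbb{C}^n)}$ recalled in Section~\ref{s:prelim}. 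The conceptual heart of the argument is simply the recognition that $F \equiv 1$ is the right test function: once the identity $S_\varphi 1 = \varphi$ is in hand, the rest reduces to one direct application of \eqref{F-1BF}.
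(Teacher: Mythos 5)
Your proposal is correct and follows essentially the same route as the paper: the paper also tests on the Gaussian $f_0(x)=e^{-x^2}$, whose Bargmann transform is constant and which is fixed by $\mathcal F$, and then computes $\mathcal F B^{-1}S_\varphi Bf_0$ using the same ingredients (Lemma~\ref{prop2.2}, the formula \eqref{F-1BF}/the explicit $B^{-1}$). The only cosmetic difference is that you first collapse $S_\varphi 1=\varphi$ via the reproducing kernel and then re-expand to the stated double integral (where, as in the paper's own proof, the expression should be read as the iterated integral against $d\lambda(w)\,d\lambda(z)$, so no genuine Fubini interchange is needed), whereas the paper keeps the double integral throughout.
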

\begin{proof}
Consider $Tf:=B^{-1}S_{\varphi}Bf$.  Taking the Fourier transform it becomes $m\mathcal{F}f=\mathcal{F}B^{-1}S_{\varphi}Bf$. Let $$f_0(x)=e^{-x^2},$$ then we have $Bf_0(z)=\left({2/ \pi}\right)^{-{n/ 4}}.$ It follows that
$$
	S_{\varphi}Bf_0(z)=\left({2\over \pi}\right)^{-{n\over 4}} \int_{\mathbb{C}^n} e^{z\cdot\bar{w}} \varphi(z-\bar{w}) d\lambda(w).
$$
%Note that
%$$
%	B\mathcal{F}B^{-1}f(z)=f(-iz).
%$$
By Lemma \ref{prop2.2}, we have
$$
	(B\mathcal{F}B^{-1})S_{\varphi}Bf_0(z)=S_{\varphi}Bf_0(-iz)=\left({2\over \pi}\right)^{-{n\over 4}} \int_{\mathbb{C}^n} e^{-iz\cdot\bar{w}} \varphi(-iz-\bar{w}) d\lambda(w).
$$
Now we get
\[
\begin{split}
\mathcal{F}B^{-1}S_{\varphi}Bf_0(x)
&= B^{-1}(B\mathcal{F}B^{-1})S_{\varphi}Bf_0(x)\\
&= \int_{\mathbb{C}^n} \int_{\mathbb{C}^n} e^{-iz\cdot\bar{w}} \varphi(-iz-\bar{w}) e^{-x^2+2x\cdot\bar{z}-\frac{\bar{z}^2}{2}}d\lambda(w)d\lambda(z)\\
&= \int_{\mathbb{C}^n} \int_{\mathbb{C}^n} e^{z\cdot\bar{w}} \varphi(z-\bar{w}) e^{-2ix\cdot\bar{z}+\frac{\bar{z}^2}{2}}d\lambda(w)d\lambda(z)\cdot f_0(x).
\end{split}
\]
Since $\mathcal{F}f_0(x)=f_0(x)$,  we get the relation
$$
	m(x)=\int_{\mathbb{C}^n} \int_{\mathbb{C}^n}  \varphi(z-\bar{w}) e^{z\cdot\bar{w}-2ix\cdot\bar{z}+\frac{\bar{z}^2}{2}}d\lambda(w)d\lambda(z).
$$
The proof of Proposition \ref{prop reverse} is complete.
\end{proof}

As from {the first comment in} Remark \ref{rem1.2},
it is natural to ask whether the characterization of $S_\varphi$ as in Theorem \ref{th1.1} can imply some boundedness on the Fock space
${\mathscr F}^p(\CC)$ for $p\in [1,\infty)$. For $p>2$, for $S_\varphi$ defined in \eqref{e1.1} with $\varphi$ as in \eqref{e1.2}, by using H\"older's inequality
one can verify that
$ S_\varphi $ is bounded from ${\mathscr F}^p(\CC)$ to ${\mathscr F}^{p'}(\CC)$. We omit the details here.
However, this is not true in general when $p\in[1,2)$. We now provide a counterexample in dimension $n=1$ with $S_\varphi=BHB^{-1}$, where $H$ is the Hilbert transform on $\mathbb R$ (we refer to Example 2 in Section \ref{s:Apps} for details, see also \cite[Section 8]{Z3}).
\begin{prop}\label{prop counter}
Let $S_\varphi=BHB^{-1}$, where $H$ is the Hilbert transform on $\mathbb R$. Suppose $1\leq p<2$. Then $S_\varphi$ is not well-defined on $\mathscr F^p(\mathbb C)$.
\end{prop}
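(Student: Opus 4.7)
The plan is to exhibit an explicit $F_0\in \mathscr F^p(\mathbb C)$ for which the integral in \eqref{e1.1} defining $S_\varphi F_0(z)$ fails to converge as a Lebesgue integral. I would take $F_0(w)=e^{w^2/2}$. Writing $w=u+iv$, one has $|F_0(w)|^p e^{-|w|^2} = e^{(p/2-1)u^2-(p/2+1)v^2}$, which is integrable on $\mathbb C$ if and only if $p<2$; hence $F_0\in \mathscr F^p(\mathbb C)\setminus \mathscr F^2(\mathbb C)$ for every $p\in[1,2)$, and $F_0$ is not reached by the Bargmann transform of any $L^2(\mathbb R)$ function.

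Next I would use the explicit form of $\varphi$ corresponding to $S_\varphi=BHB^{-1}$ supplied by Example~2 of Section~\ref{s:Apps}, namely the multiplier $m(x)=-i\,\mathrm{sgn}(x)$ in \eqref{e1.2}. Expanding $-2(x-iz/2)^2 = z^2/2-2x^2+2ixz$ and doing the sign decomposition gives
\[
\varphi(z)=\left(\tfrac 2\pi\right)^{1/2} e^{z^2/2} g(z),\qquad g(z):=2\int_0^\infty e^{-2x^2}\sin(2xz)\,dx,
\]
and one integration by parts in $x$ yields
\[
g(z)=\frac 1z - \frac 4z\int_0^\infty x\,e^{-2x^2}\cos(2xz)\,dx.
\]
The remainder integral is uniformly bounded for $\mathrm{Im}\,z$ in any compact set and decays in $\mathrm{Re}\,z$ by Riemann--Lebesgue applied to the Schwartz function $x\,e^{-2x^2}$, so this exhibits a genuine two-sided asymptotic $g(z)\sim 1/z$ as $|\mathrm{Re}\,z|\to\infty$ uniformly in $\mathrm{Im}\,z$ on bounded sets.

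I would then analyse the integrand of $S_\varphi F_0$ at the single point $z=0$. Using $(-\bar w)^2=\overline{w^2}$, so $\mathrm{Re}\bigl((-\bar w)^2\bigr)=u^2-v^2$, all Gaussian factors collapse:
\[
\bigl|F_0(w)\,\varphi(-\bar w)\bigr|\,e^{-|w|^2} = \left(\tfrac 2\pi\right)^{1/2} e^{(u^2-v^2)/2+(u^2-v^2)/2-u^2-v^2}\,|g(-u+iv)| = \left(\tfrac 2\pi\right)^{1/2} e^{-2v^2}\,|g(-u+iv)|,
\]
which is completely independent of $u$ in its Gaussian part. Combined with the lower bound $|g(-u+iv)|\gtrsim 1/|u|$ for $|u|$ large (uniform for $v$ in compact sets) from the previous step, this forces $\int_\mathbb R |g(-u+iv)|\,du=+\infty$ for every fixed $v$, and Fubini--Tonelli gives $\int_{\mathbb C}|F_0(w)\,\varphi(-\bar w)|\,d\lambda(w)=+\infty$. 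Therefore $S_\varphi F_0(0)$ does not exist as a Lebesgue integral, and so $S_\varphi$ is not well-defined on $\mathscr F^p(\mathbb C)$.

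The principal obstacle in this plan is to upgrade the identity $g(z)=1/z+\text{remainder}$ to a genuine two-sided estimate $|g(-u+iv)|\gtrsim 1/|u|$ holding uniformly on a horizontal strip, so that it really forces divergence of the $u$-integral rather than being compatible with some hidden cancellation. This reduces to standard asymptotic analysis of the complex error function (equivalently, the Dawson function), and the integration-by-parts identity displayed above is exactly the form in which it is most convenient to read off.
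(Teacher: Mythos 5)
Your proposal is correct, and it uses the same test function as the paper: the paper's own proof also takes $F(w)=e^{w^{2}/2}$, notes it lies in $\mathscr F^p(\mathbb C)$ exactly for $p<2$, and works with the $\varphi$ of Example 2 ($m(x)=-i\,\mathrm{sgn}(x)$). Where you diverge is the mechanism for exhibiting the failure: the paper substitutes the multiplier representation of $\varphi$ into $S_\varphi F(z)$, interchanges the $x$- and $w$-integrations, and observes that the resulting innermost integral $\int_{\mathbb R}\bigl(e^{2xiz-2xb}e^{-2xia}-e^{-2xiz+2xb}e^{2xia}\bigr)\,da$ is a non-decaying oscillation and hence divergent for every $z$, $x>0$, $b$; no asymptotics of $\varphi$ are needed, but the interchange of integrals is performed without absolute convergence, so the argument really shows divergence of a particular iterated integral. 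Your route instead stays with the defining integral \eqref{e1.1} itself at the single point $z=0$: you extract the closed form $\varphi(z)=(2/\pi)^{1/2}e^{z^{2}/2}g(z)$ with $g$ a (scaled) Dawson function, prove $g(z)\sim 1/z$ on horizontal strips via one integration by parts plus Riemann--Lebesgue, and then use the exact cancellation of the Gaussian factors (the modulus of the integrand is $c\,e^{-2v^{2}}|g(-u+iv)|$, with no decay in $u$ beyond the $1/|u|$ from $g$) together with Tonelli to conclude that $\int_{\mathbb C}|F(w)\varphi(-\bar w)|\,d\lambda(w)=+\infty$. This buys a slightly sharper conclusion -- the Lebesgue integral defining $S_\varphi F(0)$ is genuinely undefined, with no order-of-integration issues left implicit -- at the cost of the small analytic lemma on the two-sided lower bound $|g(-u+iv)|\gtrsim 1/|u|$ uniformly for $v$ bounded, which your integration-by-parts identity does indeed deliver (the remainder $\tfrac4z\int_0^\infty xe^{-2x^{2}}\cos(2xz)\,dx$ is $o(1/|z|)$ uniformly on strips). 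Both arguments hinge on the same structural fact, namely that $e^{w^{2}/2}$ saturates the Gaussian weight so that nothing decays in the real direction; your write-up makes that quantitative, the paper's makes it visible as a divergent oscillatory integral.
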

\begin{proof}
For $S_\varphi=BHB^{-1}$, we see that from Example 2 in Section \ref{s:Apps}, the function $\varphi$ is as in \eqref{e1.2} with  $m(x):=-i {\rm sgn}(x)$. Consider $F(w):=e^{\frac{w^2}{2}}$. Note that this function $F$ is in  $\mathscr F^p(\mathbb C)$ for all $1\leq p<2$ but is not in $\mathscr F^p(\mathbb C)$ for any $ p\geq2$.
Then
\[
\begin{split}
S_{\varphi}F(z)
&= -i \int_0^\infty \int_{\mathbb{C}} F(w)e^{z\bar{w}}e^{-2(x-\frac{i}{2}(z-\bar{w}))^2} d\lambda(w)dx\\
& \ \ \ \ + i \int_{-\infty}^0 \int_{\mathbb{C}} F(w)e^{z\bar{w}} e^{-2(x-\frac{i}{2}(z-\bar{w}))^2} d\lambda(w)dx \\
&= -i \int_0^\infty \int_{\mathbb{C}} e^{\frac{w^2}{2}}\, e^{z\bar{w}} \left(e^{-2(x-\frac{i}{2}(z-\bar{w}))^2}-e^{-2(x+\frac{i}{2}(z-\bar{w}))^2}\right) d\lambda(w)dx  \\
&= -i \,e^{\frac{z^2}{2}}\,\int_0^\infty \int_{\mathbb{C}} e^{\frac{w^2}{2}+\frac{\bar{w}^2}{2}} (e^{2xi(z-\bar{w})}-e^{-2xi(z-\bar{w})}) d\lambda(w)\ e^{-2x^2} dx .
\end{split}
\]
Now we see that by writing $w=a+ib$,
\[
\begin{split}
\
&\int_{\mathbb{C}} e^{\frac{w^2}{2}+\frac{\bar{w}^2}{2}} (e^{2xi(z-\bar{w})}-e^{-2xi(z-\bar{w})}) d\lambda(w)\\
&\ \ \ \ \ \ \ \ ={1\over \pi}\int_{\mathbb{R}} \int_{\mathbb{R}} (e^{2xiz-2xb}e^{-2xia}-e^{-2xiz+2xb}e^{2xia}) da\ e^{-2b^2} db.
\end{split}
\]
But it is obvious that for each $z\in\mathbb C$, $x\in (0,\infty)$, and $b\in\mathbb R$, the integral
$$
	\int_{\mathbb{R}} (e^{2xiz-2xb}e^{-2xia}-e^{-2xiz+2xb}e^{2xia}) da	
$$
 is not convergent. Thus, we see that $S_\varphi$ is not well-defined on $\mathscr F^p(\mathbb C)$.
\end{proof}

{As from the second comment in Remark \ref{rem1.2}, there is another possible definition the Fock space $F^p$ for $1<p<\infty$, which consists of
all entire functions $f$ on $\mathbb C$ such that the function $f(z) e^{-\frac{|z|^2}{2}}$ belongs to $L^p(\mathbb C)$ with the norm
$$ \|f\|_{F^p} := \bigg( {p\over 2\pi} \int_{\mathbb C} \Big| f(z) e^{- |z|^2/2}\Big|^p dz\bigg)^{1\over p}. $$
It is clear that $F^2$ is the same as $\mathscr F^2$.
See for example \cite{CHH}. The main result in \cite{CHH} states that when $2<p<\infty$, the Bargmann transform maps $L^p(\mathbb R)$ boundedly into 
$F^p$, but NOT onto.  Hence, if we consider the operator $S_\varphi=BHB^{-1}$ as in Proposition \ref{prop counter}, which is well defined on $F^2$, then $S_\varphi$ is not well-defined on 
$F^p$ for $2<p<\infty$.

}

\medskip

In the theory of singular integrals in harmonic analysis, it is well-known (see \cite{DJ, St1}) that the famous ``$T(1)$" theorem of David and  Journ\'e gives necessary and sufficient conditions for   generalized
Calderon-Zygmund operators to    be bounded on $L^2(\RR)$.
We  propose the following open problem
%for a criterion of boundedness for general singular integrals (non-convolution type)
on the Fock space ${\mathscr F}^2(\mathbb C^n)$ (see also Proposition \ref{prop Folland}).

\smallskip

{\bf Open problem}:   Characterize those  entire  functions  $K_T(z,w) $ on $ {\mathbb C}^{2n}$
such that the integral operator
$$
TF(z) = \int_{\mathbb C^n} K_T(z, \bar w) F(w) d\lambda(w), \ \ \ \ \ z\in{\CC}
$$
is bounded on  ${\mathscr F}^2(\mathbb C^n)$.

%For example, if we consider a special case $K_{T}(z,\bar w):=\varphi(w)e^{z\cdot \bar w} $ for some entire function $\varphi \in L^2(\mathbb C^n),$ then $T$ is a Toeplitz operator on $\mathscr F^2(\mathbb C^n).$

\medskip

\section{Applications and Examples of Theorem~\ref{th1.1}}
\setcounter{equation}{0}
\label{s:Apps}

%In this section, we study some well-known examples of operators under the Bargmann transform
%from $L^2(\mathbb R^n)$ to $\mathscr F^2(\mathbb C^n)$.

%As pointed out in \cite{Z1},
There are many examples to show that characterising the boundedness of $S_\varphi$ is interesting and non-trivial. By choosing different functions $\varphi$ in $S_\varphi$, one can recover important operators arising in complex analysis and harmonic analysis.
%
%
%For example,
%recall that the Hilbert transform is the operator on $L^2(\R)$ defined by
% $$
% H(f)(x) ={1\over \pi} \int_{\R} {f(y)\over x-y} dy,
% $$
% where the improper integral is taken in the sense of ``principle value".   The Hilbert transform  $H$ is  bounded
% on $L^2(\R)$. It is shown in \cite[Theorem 1]{Z1} that
%\begin{eqnarray}\label{e2.3}
%  BHB^{-1} f(z) ={4\over \pi} \int_{\mathbb{C}} f(w) e^{z \bar{w}} \varphi(z- \bar{w}) d\lambda(w): ={4\over \pi} S_{\varphi}
%\end{eqnarray}
%with
%$$\varphi(z)=-{2\over \sqrt{\pi}} \int_0^{2^{-1/2}z} e^{u^2} du.
%$$
%Here $B$ is
% the Bargmann transform, given by
% \begin{eqnarray}\label{e2.1}
% Bf(z)&=& \Big({2\over \pi}\Big)^{1\over 4}\int_{\R} f(x) e^{2xz-x^2- {z^2\over 2}} dx
%\end{eqnarray}
%and its inverse $B^{-1}.$
%From \eqref{e1.2}, it is concluded that  the classical Hilbert transform is unitarily equivalent to
% an operator of the form $S_{\varphi}$ on the Fock space ${\mathscr F}^2(\CC)$. For this reason,  we call the operator
%   \begin{eqnarray*}
%	S_{\varphi}F(z)= \int_{\mathbb{C}} F(w) e^{z \bar{w}} \varphi(z- \bar{w}) d\lambda(w)
%\end{eqnarray*}
% a singular integral operator.
%
%
%
%
%
%
We now apply our main result Theorem \ref{th1.1} to a few well-known examples,
such as the Riesz transform on $\mathbb R^n$, the Ahlfors--Beurling operator on $\mathbb C$, and a few others.

\medskip
\noindent
 {\bf Example 1}.  If $S_\varphi$ is the identity with $\varphi(z)=1$, then $\varphi$ can be written as \eqref{e1.2} , where $m(x)=1.$

\medskip

\noindent
{\bf Example 2}.  Let $S_\varphi=BHB^{-1}$ with $H$ the Hilbert transform defined as
 $$
 H(f)(x) ={\rm p.v.}{1\over \pi} \int_{\R} {f(y)\over x-y} dy,
 $$
 where the improper integral is taken in the sense of ``principle value."
 Note that $\mathcal F (Hf)(x) = m(x)\mathcal F f(x) $ with $m(x)=-i{\rm sgn}(x)$.

By Theorem~\ref{th1.1},
 the function $\varphi$ can be written as \eqref{e1.2} with  $m(x)=-i {\rm sgn}(x).$ That is,
\[ \varphi(z)=\left({2\over \pi}\right)^{1\over 2} \int_{\mathbb{R}} -i {\rm sgn}(x)e^{-2(x-\frac{i}{2}z)^2} dx .   \]
Note that
\begin{eqnarray*}
\frac{d}{dz}\varphi(z)
&=&-i\left({2\over \pi}\right)^{1\over2} \left(\int_0^\infty-\int_{-\infty}^0\right) \left(-4(x-\frac{i}{2}z)\right) \left (-\frac{i}{2}\right) e^{-2(x-\frac{i}{2}z)^2} dx\\
&=&-\left(2 \pi \right)^{-{1\over2}} e^{-2(x-\frac{i}{2}z)^2}\left(\Big{|}_0^\infty-\Big{|}_{-\infty}^0\right)
 =\left({2\over \pi}\right)^{1/2} e^{z^2\over2}
\end{eqnarray*}
with $   \varphi(0)=0.$ This implies
$$
	\varphi(z)={2\over\sqrt{\pi}}  A\left(\frac{z}{\sqrt{2}}\right)\in {\mathscr F}^2(\C),
$$
where
$$
	A(z)=\int_0^z e^{u^2} du, \ \ \ \ z\in \C,
$$
which is the antiderivative of $e^{u^2} $ satisfying $A(0)=0$. See also \cite[Section 8]{Z3}.
\medskip

\noindent
{\bf Example 3}.  From \cite{Z1}, if $\varphi(z)=e^{az^2}$ with $0<a<\frac 12$, the operator  $S_\varphi$ is bounded on ${\mathscr F}^2(\C)$.  By Theorem~\ref{th1.1}, $\varphi$ can be written as \eqref{e1.2} for some $m\in L^\infty (\mathbb{R})$, hence
$$
	\int_{\mathbb{R}} m(x)e^{-2(x-\frac{i}{2}z)^2-az^2} dx = \int_{\mathbb{R}} m(x)e^{-\left(\frac{x}{\sqrt{\frac 12 -a}}-i\sqrt{\frac 12 -a}z\right)^2} e^{\frac{4a}{1-2a}x^2} dx
$$
should be a constant. Thus we are able to choose $m(x)=e^{-\frac{4a}{1-2a}x^2}$, which is a bounded function.

\medskip

\noindent
{\bf Example 4}.  Let $\varphi(z)=e^{z\bar{a}}$. If $\varphi(z)$ has the representation \eqref{e1.2}, then
$$
	\int_{\mathbb{R}} m(x)e^{-2x^2+(2ix-\bar{a})z+\frac 12 z^2} dx = \int_{\mathbb{R}} m(x)e^{2xi\bar{a}-\frac{\bar{a}^2}{2}} e^{-2(x+\frac{i}{2}(\bar{a}-z))^2} dx
$$
should be a constant, hence
$$
	\int_{\mathbb{R}} (m(x)e^{2xi\bar{a}-\frac{\bar{a}^2}{2}}-c) e^{-2(x+\frac{i}{2}(\bar{a}-z))^2} dx=0
$$
for some constant $c$.

Thus $m(x)=c_0 e^{-2xi\bar{a}}$ almost everywhere, where $c_0$ is a constant. By Theorem~\ref{th1.1}, $S_\varphi$ is bounded on ${\mathscr F}^2(\CC)$ if and only if $m$ is bounded, i.e. $a$ is real. In fact, this is a result shown
in \cite{Z1} and when $a$ is real, $S_\varphi=W_a$, which is a unitary operator defined above.

%(a) $e^{z\cdot \bar{w}}$ is the reproducing kernel.
%
%(b) $B\mathcal{F}B^{-1}F(z)=F(-iz)$, where $\mathcal{F}$ is the Fourier transform on $\mathbb{R}^n$, which is given by
%$$
%	\mathcal{F}f(x)=\pi^{-\frac{n}{2}} \int_{\mathbb{R}^n} f(t)e^{-2ix\cdot t}dt.
%$$
%And the inverse
%$$
%	\mathcal{F}^{-1}F(x)=\pi^{-\frac{n}{2}} \int_{\mathbb{R}^n} f(t)e^{2ix\cdot t}dt.
%$$

\medskip

\noindent
{\bf Example 5}. Riesz transforms on $\mathbb R^n$.

We now recall the
Riesz transform on $\mathbb R^n$: for $f\in L^2(\mathbb R^n)$, $x\in \mathbb R^n$, $ j=1,2,\ldots,n,$ the $j$-th Riesz transform is defined as
$$ R_j f(x) =\lim_{\epsilon\to0}  \int_{|y|\geq\epsilon} K_j(y) f(x-y) dy, \quad$$
where
$$ K_j(y)= c_n {y_j\over |y|^{n+1}},\quad c_n ={\Gamma({n+1\over2}) \over \pi^{n+1\over2}}. $$

Note that for $j=1,2,\ldots,n,$
$$ \mathcal F (R_j f ) (\xi)   = m_j(\xi) \mathcal F(f)(\xi), $$
where
\begin{align}\label{Riesz multiplier}
m_j(\xi):=-i {\xi_j\over |\xi|}.
\end{align}
Hence we have
$$ R_j(f) (x) = \mathcal F^{-1} \Big(-i {\xi_j\over |\xi|} \mathcal F(f)(\cdot)\Big)(x). $$

Then, by applying our main result Theorem \ref{th1.1} and Lemma \ref{prop2.2},
we obtain that
\begin{prop}\label{th2.3}
For $j=1,2,\ldots,n$, the operator $T_j = B R_j B^{-1}: \mathscr F^2(\mathbb C^n) \to \mathscr F^2(\mathbb C^n)$
is given by
$$ T_jF(z) = \int_{\CC} F(w)e^{z\cdot\bar w} \ \varphi_j(z-\bar w) d\lambda(w) $$
for all $F\in \mathscr F^2(\mathbb C^n)$, with
$$\varphi_j(z):= \left( {2\over\pi}\right)^{n\over2} \int_{\RR}  m_j(\xi)\  e^{-2 (\xi- {iz \over2})^2}d\xi,\quad {\rm where\ } m_j(\xi)=-i\ {\xi_j\over |\xi|}. $$
\end{prop}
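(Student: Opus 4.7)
The plan is to reduce the statement to a direct application of Lemma~\ref{le3.3}, since the Riesz transform is precisely the kind of translation-invariant $L^2$-operator that the lemma processes into the integral form appearing in Theorem~\ref{th1.1}.

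First, I would verify that $R_j \in \mathscr{M}^{2,2}(\mathbb{R}^n)$ with the $L^\infty$ multiplier $m_j(\xi) = -i\xi_j/|\xi|$. Translation invariance is classical, and since $|m_j(\xi)| = |\xi_j|/|\xi| \leq 1$ almost everywhere, one has $m_j \in L^\infty(\mathbb{R}^n)$ with $\|m_j\|_{L^\infty} = 1$; Proposition~\ref{prop3.1} then gives $L^2$-boundedness together with the multiplier identity $\mathcal{F}(R_jf)(\xi) = m_j(\xi)\mathcal{F}(f)(\xi)$ for all $f \in L^2(\mathbb{R}^n)$. Note that the zero-homogeneity of $m_j$ makes this identity insensitive to the particular normalization of $\mathcal{F}$ adopted in Section~\ref{s:prelim}.

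Second, I would apply Lemma~\ref{le3.3} with $T = R_j$ and $m = m_j$. The lemma immediately supplies
\[
BR_jB^{-1}F(z) = \left(\frac{2}{\pi}\right)^{n/2} \int_{\mathbb{C}^n} F(w)\, e^{z\cdot \bar{w}} \left(\int_{\mathbb{R}^n} m_j(\xi)\, e^{-2(\xi - \tfrac{i}{2}(z-\bar{w}))^2}\, d\xi\right) d\lambda(w)
\]
for every $F \in \mathscr{F}^2(\mathbb{C}^n)$ and every $z \in \mathbb{C}^n$. Comparing the inner integral with the definition of $\varphi_j$ given in the statement, one sees that the bracketed expression is exactly $\varphi_j(z - \bar{w})$, and the claimed formula for $T_jF(z)$ falls out.

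Finally, the assertion that $T_j$ maps $\mathscr{F}^2(\mathbb{C}^n)$ into itself follows at once from Lemma~\ref{le2.3} (the Bargmann transform is unitary) together with the $L^2$-boundedness of $R_j$; alternatively one can invoke Theorem~\ref{th1.1} applied to $m = m_j$, which simultaneously confirms $\varphi_j \in \mathscr{F}^2(\mathbb{C}^n)$ via Lemma~\ref{le2.1} and yields $\|T_j\|_{\mathscr{F}^2 \to \mathscr{F}^2} = \|m_j\|_{L^\infty} = 1$. There is no substantive obstacle here: the proposition is essentially a worked example of Lemma~\ref{le3.3}, and the only minor point demanding care is the compatibility between the homogeneous multiplier $m_j$ and the factor-of-two normalization in the paper's Fourier transform, which is harmless because of the zero-homogeneity.
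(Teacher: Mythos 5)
Your proof is correct and follows essentially the same route as the paper, which presents this proposition as an immediate consequence of its main machinery (the multiplier identity $\mathcal F(R_jf)=m_j\,\mathcal Ff$ with $m_j\in L^\infty$, fed into Theorem \ref{th1.1} via Lemma \ref{le3.3}/Lemma \ref{prop2.2}) rather than giving a separate argument. Your added remark that the zero-homogeneity of $m_j$ makes the multiplier insensitive to the factor-of-two Fourier normalization is a sensible sanity check consistent with the paper's stated multiplier \eqref{Riesz multiplier}.
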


From the Fourier multiplier of Riesz transform as given in \eqref{Riesz multiplier}, we see that
%These are given  by
% $$
% R_j(f)(x) = c_n\int_{\RR} {y_j\over |y|^{n+1}} f(x-y) dy, \ \ \ \ j=1, \cdots, n,
% $$
% where $c_n={\Gamma\left({n+1\over 2}\right)\over \pi^{(n+1)/2}}$, and  the improper integral is taken in the sense of ``principle value".   The Riesz transform  $R_j$ is  bounded
% on $L^2(\RR)$.
%  whose Fourier transform is
%  $$m_j(x)=-i\frac{x_j}{|x|}, \ \ \ \ \ j=1,2,\dots,n.
%  $$
 $
	\sum_{j=1}^n m_j^2(\xi) + 1 = 0,
$
which  gives a fundamental equation for Riesz transforms:
\begin{eqnarray}\label{e3.6}
	\sum\limits_{j=1}^n R_j^2 =- Id,
\end{eqnarray}
where $Id$ is the identity operator on $L^2(\mathbb R^n)$.

Define the operators $S_{\varphi_j}$ by
\begin{eqnarray}\label{e3.77}
S_{\varphi_j}= BR_jB^{-1}, \ \ \ \ \ j=1,2,\dots,n.
\end{eqnarray}

\begin{prop}\label{th3.4}
 The following  equation holds for  the operators $\{S_{\varphi_j}\}$
\begin{eqnarray}\label{e3.7}
	\sum\limits_{j=1}^n S_{\varphi_j}^2 =- {\bf Id}.
\end{eqnarray}
with
 $
	\sum\limits_{j=1}^n ||\varphi_j||_{{\mathscr F}^2(\CC)}^2 = 1,
 $ where ${\bf Id}$ is the identity operator on $\mathscr F^2(\mathbb C^n)$.
\end{prop}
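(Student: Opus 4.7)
The plan is to deduce both identities by transferring known facts about the Riesz transforms on $L^2(\mathbb R^n)$ through the unitary Bargmann transform $B$.

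For the operator identity, I would start from the definition $S_{\varphi_j} = B R_j B^{-1}$ in \eqref{e3.77} and use $B^{-1} B = \mathrm{Id}_{L^2(\mathbb R^n)}$ to telescope
\[
\sum_{j=1}^n S_{\varphi_j}^2 = \sum_{j=1}^n (B R_j B^{-1})(B R_j B^{-1}) = B\Big(\sum_{j=1}^n R_j^2\Big) B^{-1}.
\]
By the fundamental identity \eqref{e3.6}, the right-hand side equals $B(-\mathrm{Id})B^{-1} = -\mathbf{Id}$, which is the desired relation on $\mathscr F^2(\mathbb C^n)$.

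For the norm identity, I would compute $\|\varphi_j\|^2_{\mathscr F^2(\mathbb C^n)}$ directly by repeating, with minor modifications, the calculation in the proof of Lemma \ref{le2.1}. Writing $z = u + iv$ and substituting $y = x + v/2$, one arrives at the exact identity
\[
\varphi_j(u+iv) = 2^{n/2} e^{u^2/2} \, \mathcal F^{-1}\!\left[m_j\!\left(\cdot - \tfrac{v}{2}\right) e^{-2(\cdot)^2}\right](u),
\]
after which Plancherel's theorem yields
\[
\|\varphi_j\|^2_{\mathscr F^2(\mathbb C^n)} = \left(\frac{2}{\pi}\right)^n \int_{\mathbb R^n} e^{-v^2} dv \int_{\mathbb R^n} \left|m_j\!\left(x - \tfrac{v}{2}\right)\right|^2 e^{-4x^2} dx.
\]
Summing in $j$ and using the pointwise identity $\sum_{j=1}^n |m_j(\xi)|^2 = \sum_{j=1}^n \xi_j^2/|\xi|^2 = 1$, the right-hand side decouples:
\[
\sum_{j=1}^n \|\varphi_j\|^2_{\mathscr F^2(\mathbb C^n)} = \left(\frac{2}{\pi}\right)^n \left(\int_{\mathbb R^n} e^{-v^2} dv\right)\left(\int_{\mathbb R^n} e^{-4x^2} dx\right),
\]
and a direct evaluation of the two Gaussian integrals, giving $\pi^{n/2}$ and $(\pi/4)^{n/2}$ respectively, produces exactly $1$.

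I do not expect any serious obstacle. The only delicate point is keeping careful track of the normalization constant $(2/\pi)^{n/2}$ built into the definition of $\varphi_j$; it must balance against the two Gaussian factors so that the final constant collapses to unity. Everything else is an immediate consequence of the unitarity of $B$, Plancherel for $\mathcal F$, and the explicit form of the Riesz multipliers $m_j$ already recorded above.
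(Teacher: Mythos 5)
Your proposal is correct, and for the norm identity it takes a genuinely different route from the paper. The operator identity you handle by pure conjugation, $\sum_j S_{\varphi_j}^2 = B\big(\sum_j R_j^2\big)B^{-1} = -\mathbf{Id}$, which is exactly what the definition $S_{\varphi_j}=BR_jB^{-1}$ gives and is in substance what the paper uses as well. For $\sum_j \|\varphi_j\|^2_{{\mathscr F}^2(\CC)}=1$, however, you compute each norm directly: writing $z=u+iv$, recognizing $\varphi_j(u+iv)$ as $2^{n/2}e^{u^2/2}$ times an inverse Fourier transform of $m_j(\cdot-\tfrac v2)e^{-2(\cdot)^2}$, applying Plancherel, and then summing with the pointwise identity $\sum_j|m_j(\xi)|^2=1$ a.e., so that the Gaussian integrals $\pi^{n/2}$ and $(\pi/4)^{n/2}$ cancel the $(2/\pi)^n$ normalization. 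This is the calculation of Lemma \ref{le2.1} carried out as an exact identity rather than an upper bound, and it is sound (Tonelli justifies the interchange, and $\xi=0$ is a null set). The paper argues instead at the kernel level: it identifies $S_{\varphi_j}^2=S_{\varphi_{jj}}$ where $\varphi_{jj}$ is built from the multiplier $m_j^2$, equates the two integral kernels, sets $z=\xi$, and uses $\overline{\varphi_j(z)}=\varphi_j(\bar z)$ together with $\sum_j m_j^2=-1$ to conclude $\sum_j\int|\varphi_j(w+it)|^2\,d\lambda(w)=1$ for every real $t$, the norm identity being the case $t=0$. Your route is shorter and more elementary; the paper's yields extra structure along the way, namely the invariance of the sum under translations along the imaginary axis and the relation $\sum_j S_{\varphi_j}(\varphi_j)+1=0$.
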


\begin{proof}  Note that  $m_j$ is an odd function, so is $\varphi_j$.  Write
\begin{eqnarray*}
	S_{\varphi_{jj}}F(z) &=&BR_j^2B^{-1}F(z)
=\int_{\mathbb{C}^n} F(\xi) e^{z\cdot \bar{\xi}} \varphi_{jj}(z-\bar{\xi}) d\lambda(\xi).
\end{eqnarray*}
On the other hand
\begin{eqnarray*}
S_{\varphi_{jj}}F(z)
&=& (BR_jB^{-1})(BR_jB^{-1})F(z)
= \int_{\mathbb{C}^n} F(\xi)
\left(\int_{\mathbb{C}^n} \varphi_j(z-\bar{w}) \varphi_j(w-\bar{\xi}) e^{w\cdot \bar{\xi}} e^{z\cdot \bar{w}} d\lambda(w)\right) d\lambda(\xi).
\end{eqnarray*}
Since $F$ is arbitrary,  we get
\begin{eqnarray}\label{ww}
	e^{z\cdot \bar{\xi}} \varphi_{jj}(z-\bar{\xi}) = \int_{\mathbb{C}^n} \varphi_j(z-\bar{w}) \varphi_j(w-\bar{\xi}) e^{w\cdot \bar{\xi}} e^{z\cdot \bar{w}} d\lambda(w).
\end{eqnarray}
Set $z=\xi$ and notice $\overline{\varphi_j(z)}=\varphi_j(\bar{z})$, then it follows that
\begin{eqnarray*}
\varphi_{jj}(z-\bar{z})
&=& \int_{\mathbb{C}^n} \varphi_j(z-\bar{w}) \varphi_j(w-\bar{z}) e^{w\cdot \bar{z}} e^{z\cdot \bar{w}} e^{-|z|^2} d\lambda(w)\\
&=& -\pi^{-n} \int_{\mathbb{C}^n} |\varphi_j(z-\bar{w})|^2 e^{w\cdot \bar{z}} e^{z\cdot \bar{w}} e^{-|z|^2-|w|^2} dw\\
&=& -\pi^{-n} \int_{\mathbb{C}^n} |\varphi_j(z-\bar{w})|^2 e^{-|z-w|^2} dw\\
&=& -\int_{\mathbb{C}^n} |\varphi_j(w+z-\bar{z})|^2 d\lambda(w).
\end{eqnarray*}
However,
\begin{eqnarray*}
\sum\limits_{j=1}^n \varphi_{jj}(z-\bar{z})
&=& \sum\limits_{j=1}^n \left( {2\over\pi}\right)^{n\over2} \int_{\mathbb{R}^n} m_j^2(x) e^{-2(x-\frac{i(z-\bar{z})}{2})^2} dx
= -\left( {2\over\pi}\right)^{n\over2} \int_{\mathbb{R}^n} e^{-2(x-\frac{i(z-\bar{z})}{2})^2} dx
= -1.
\end{eqnarray*}
Then it implies
$$
	\sum\limits_{j=1}^n \int_{\mathbb{C}^n} |\varphi_j(w+it)|^2 d\lambda(w) = 1.
$$
Define translation along the imaginary axis $\tau_t f(z) = f(z+it)$, where $t$ is real. Then it says the sum
$$
	\sum\limits_{j=1}^n ||\tau_t\varphi_j||_{{\mathscr F}^2(\CC)}^2 = 1
$$
under any translation along the imaginary axis.
In particular,  we have that
 $
	\sum\limits_{j=1}^n ||\varphi_j||_{{\mathscr F}^2(\CC)}^2 = 1.
 $
 Moreover, we set  $\xi=0$ in  \eqref{ww}, then we get
$$
	\varphi_{jj}(z) = S_{\varphi_j}(\varphi_j)(z), \quad z\in\mathbb C^n,
$$
hence
$$
	\sum\limits_{j=1}^n S_{\varphi_j}(\varphi_j)(z) + 1 =0.
$$
 The proof of Proposition~\ref{th3.4} is complete.
\end{proof}

\medskip

\noindent
{\bf Example 6}. Ahlfors--Beurling operator on $\mathbb C$.

The Ahlfors--Beurling operator is a very well-known Calder\'on--Zygmund operator on
$\mathbb C$, defined on $L^p(\mathbb C)$, $1<p<\infty$, as follows:
$$ \mathscr B \psi(z) = {\rm p.v.}\ {1\over \pi} \int_{\mathbb C} \frac{ \psi(\xi)}{(\xi- z)^2 }d\xi. $$
It connects harmonic analysis and complex analysis and is of fundamental importance in several areas of mathematics including PDE and quasiconformal mappings. For example, Petermichl and Volberg \cite{PV} proved a sharp weighted  estimate of $\mathscr B$,
which shows that any weakly quasiregular map is quasiregular. We also recall that $\mathscr B$ is an isometry on $L^2(\mathbb C)$,
and is given as a Fourier multiplier
of $\mathcal{F}(\mathscr Bf)(\xi)=m(\xi) \mathcal{F}(f)(\xi)$, where
$$m(\xi) ={ \bar \xi\over\xi},\quad \xi\in\mathbb C.$$

Then by applying Theorem \ref{th1.1}, we have the following.
\begin{prop}\label{prop Beurling}
The operator $T = B \mathscr B B^{-1}: \mathscr F^2(\mathbb C^2) \to \mathscr F^2(\mathbb C^2)$
is given by
$$ TF(z) = \int_{\mathbb C^2} F(w)e^{z\cdot\bar w} \ \varphi(z-\bar w) d\lambda(w) $$
for all $F\in \mathscr F^2(\mathbb C^2)$, with
$$\varphi(z-\bar w):=  {2\over\pi} \int_{\mathbb R^2}  m(x)\  e^{-2 (x- {i(z-\bar w) \over2})^2}dx,\quad {\rm where\ } m(x)=\left( {x_1-ix_2\over x_1+ix_2}\right), \quad x=(x_1,x_2)\in\mathbb R^2 . $$
\end{prop}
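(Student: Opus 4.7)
The plan is to derive Proposition~\ref{prop Beurling} as a direct corollary of Theorem~\ref{th1.1} (more precisely, of Lemma~\ref{le3.3}), applied with $n=2$ after identifying $\mathbb{C}$ with $\mathbb{R}^2$ via $z=x_1+ix_2\leftrightarrow (x_1,x_2)$.

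First I would verify the input hypothesis, namely that the Ahlfors--Beurling operator $\mathscr{B}$ belongs to $\mathscr{M}^{2,2}(\mathbb{R}^2)$ and is given by Fourier multiplication against a bounded symbol. This is already recalled in the paragraph preceding the proposition: $\mathcal{F}(\mathscr{B}f)(\xi)=m(\xi)\mathcal{F}(f)(\xi)$ with $m(\xi)=\bar\xi/\xi$. Expressing $\xi=\xi_1+i\xi_2$ in real coordinates gives
$$ m(\xi_1,\xi_2) = \frac{\xi_1-i\xi_2}{\xi_1+i\xi_2}, $$
and since $|m(\xi)|=1$ for every $\xi\neq 0$, one has $m\in L^\infty(\mathbb{R}^2)$ with $\|m\|_{L^\infty}=1$. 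Translation invariance of $\mathscr{B}$ on $L^2(\mathbb{R}^2)$ is immediate from the multiplier form.

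Second, I would apply Lemma~\ref{le3.3} with $n=2$ and with $T=\mathscr{B}$. That lemma expresses $BTB^{-1}$ as an integral operator on $\mathscr{F}^2(\mathbb{C}^2)$ with kernel built from the Bargmann transform of $\mathcal{F}^{-1}(m\cdot)$, and yields
$$ B\mathscr{B}B^{-1}F(z) = \frac{2}{\pi}\int_{\mathbb{C}^2} F(w)\,e^{z\cdot\bar w}\left(\int_{\mathbb{R}^2} m(x)\,e^{-2\left(x-\frac{i}{2}(z-\bar w)\right)^2}dx\right)d\lambda(w) $$
for every $F\in\mathscr{F}^2(\mathbb{C}^2)$ and $z\in\mathbb{C}^2$, which is precisely the stated formula with $\varphi(z-\bar w)$ as defined in the proposition. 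Here the square $(x-\tfrac{i}{2}(z-\bar w))^2$ is the sum-of-squares notation fixed in Section~\ref{s:prelim}, consistent with the $n=2$ case of \eqref{e1.2}.

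There is no real obstacle: the argument is essentially a plug-in application of Theorem~\ref{th1.1}. The only point requiring care is the notational transition between the complex-variable Fourier multiplier $\bar\xi/\xi$ on $\mathbb{C}\simeq\mathbb{R}^2$ and the real two-variable form $(x_1-ix_2)/(x_1+ix_2)$ appearing in \eqref{e1.2}; this is harmless since $|m|\equiv 1$ so boundedness of the symbol is trivial. As a bonus, the norm identity at the end of Theorem~\ref{th1.1} gives $\|B\mathscr{B}B^{-1}\|_{\mathscr{F}^2\to\mathscr{F}^2}=\|m\|_{L^\infty}=1$, recovering the classical fact that $\mathscr{B}$ is an isometry on $L^2(\mathbb{C})$.
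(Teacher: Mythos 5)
Your proof is correct and follows essentially the same route as the paper: both arguments rest on the multiplier representation $\mathcal F(\mathscr B f)(\xi)=\frac{\bar\xi}{\xi}\,\mathcal F f(\xi)$ with $|m|\equiv 1$ on $\mathbb R^2$, together with conjugation by the Bargmann transform. The only (cosmetic) difference is that you invoke Lemma~\ref{le3.3} as a black box, whereas the paper rewrites the computation underlying that lemma (via Lemma~\ref{prop2.2} and \eqref{F-1BF}) explicitly for this particular symbol; the content is identical.
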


\begin{proof}
For every  $F\in \mathscr F^2(\mathbb C^n)$, we have
\begin{align*}
T F(z)&= B \mathscr B B^{-1} F(z)= B\ \mathcal F^{-1} \Big( {\bar \xi\over \xi}\Big)\ \mathcal F\ \big(B^{-1} F \big) (z)\\
&= B \mathcal F^{-1} B^{-1}\ \bigg[B\ \Big( {\bar\xi\over \xi}\Big)\ \mathcal F\big(B^{-1} F \big) \bigg](z)\\
&=  B\ \Big( {\bar\xi\over \xi}\Big)\ \mathcal F\big(B^{-1} F \big) (iz),
\end{align*}
where the last equality follows from Proposition \ref{prop2.2}.

Then from the definition of the Bargmann transform and from \eqref{F-1BF}, we have
\begin{align*}
T F(z)
&=\Big( {2\over\pi}\Big)^{1\over2}\int_{\mathbb R^2}  \Big( {x_1-ix_2\over x_1+ix_2}\Big)\ \mathcal F\big(B^{-1} F \big)(x)\ e^{2x\cdot (iz)-x^2-{(iz)^2\over2}} dx\\
&={2\over\pi}\int_{\mathbb R^2}   \Big( {x_1-ix_2\over x_1+ix_2}\Big)   e^{-x^2} \int_{\mathbb C^2} F(w)e^{({\bar w}^2/2)}   e^{-2i\bar w \cdot x}  d\lambda(w)\ e^{2x\cdot (iz)-x^2-((iz)^2/2)} dx\\
%&={2\over\pi}\int_{\mathbb C^2} F(w)e^{({\bar w}^2/2)} \ \int_{\mathbb R^2} \Big( {x_1-ix_2\over x_1+x_2}\Big) \   e^{-x^2}   e^{-2i\bar w \cdot x}  \ e^{2x\cdot (iz)-x^2-((iz)^2/2)} dx\ d\lambda(w)\\
%&= {2\over\pi}\int_{\mathbb C^2} F(w)e^{{{\bar w}^2\over2}} \ \int_{\mathbb R^2}  \Big( {x_1-ix_2\over x_1+x_2}\Big) \  e^{-2 (x+ {i(z-\bar w) \over2})^2} e^{- { (z-\bar w)^2\over2} + {z^2\over2}}   dx\ d\lambda(w)\\
&= {2\over\pi}\int_{\mathbb C^2} F(w)e^{z\cdot\bar w} \ \int_{\mathbb R^2} \Big( {x_1-ix_2\over x_1+ix_2}\Big) \  e^{-2 (x- {i(z-\bar w) \over2})^2}    dx\ d\lambda(w)\\
&=\int_{\mathbb C^2} F(w)e^{z\cdot\bar w} \ \varphi(z-\bar w) d\lambda(w).
\end{align*}
The proof of Proposition \ref{prop Beurling} is complete.
\end{proof}

Parallel to the powers of Riesz transform (Proposition \ref{th3.4}), we also have the following
direct result of the powers of the Ahlfors--Beurling operator (see for example \cite{DV}).

\begin{cor}\label{cor Beurling}
Suppose $k$ is a positive integer and $k>1$. The operator $T^k = B \mathscr B^k B^{-1}: \mathscr F^2(\mathbb C^2) \to \mathscr F^2(\mathbb C^2)$
is given by
$$ T^kF(z) = \int_{\mathbb C^2} F(w)e^{z\cdot\bar w} \ \varphi_k(z-\bar w) d\lambda(w) $$
for all $F\in \mathscr F^2(\mathbb C^2)$, with
$$\varphi_k(z-\bar w):=  {2\over\pi} \int_{\mathbb R^2}  m_k(x)\  e^{-2 (x- {i(z-\bar w) \over2})^2}dx,\quad {\rm where\ } m_k(x)=\left( {x_1-ix_2\over x_1+ix_2}\right)^k, \quad x=(x_1,x_2)\in\mathbb R^2 . $$
\end{cor}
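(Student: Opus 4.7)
The plan is to reduce the claim about $\mathscr B^k$ to the same type of argument that established Proposition~\ref{prop Beurling}, by showing that $\mathscr B^k$ is again a Fourier multiplier operator with an explicit bounded symbol, and then invoking the kernel representation from Lemma~\ref{le3.3}.

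First I would recall that $\mathscr B$ itself is a Fourier multiplier on $L^2(\mathbb R^2)$, with symbol $m(\xi)=\bar\xi/\xi = (\xi_1-i\xi_2)/(\xi_1+i\xi_2)$ under the identification $\mathbb R^2\simeq\mathbb C$. Since the correspondence between bounded Fourier multipliers and elements of $\mathscr M^{2,2}(\mathbb R^2)$ is an algebra homomorphism (composition of operators corresponds to pointwise multiplication of symbols, which is immediate from Plancherel applied to $\mathcal F(\mathscr B f)=m\,\mathcal F f$), it follows that
$$\mathcal F(\mathscr B^k f)(\xi) = m(\xi)^k\,\mathcal F f(\xi) = m_k(\xi)\,\mathcal F f(\xi),\quad m_k(\xi)=\bigl((\xi_1-i\xi_2)/(\xi_1+i\xi_2)\bigr)^k.$$
Because $|m_k(\xi)|=1$ almost everywhere, we have $m_k\in L^\infty(\mathbb R^2)$ with $\|m_k\|_{L^\infty}=1$; in particular $\mathscr B^k\in\mathscr M^{2,2}(\mathbb R^2)$.

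Next I would apply Lemma~\ref{le3.3} with $T=\mathscr B^k$ and multiplier $m_k$. That lemma gives, for every $F\in\mathscr F^2(\mathbb C^2)$,
$$B\mathscr B^k B^{-1}F(z) = \Bigl(\frac{2}{\pi}\Bigr)^{\!1}\!\int_{\mathbb C^2} F(w)\,e^{z\cdot\bar w}\!\left(\int_{\mathbb R^2} m_k(x)\,e^{-2\bigl(x-\tfrac{i}{2}(z-\bar w)\bigr)^2}\,dx\right)d\lambda(w),$$
which is precisely the stated integral representation with symbol $\varphi_k$. Alternatively one can simply repeat the four-line chain of identities at the end of the proof of Proposition~\ref{prop Beurling}, replacing $m$ by $m_k$ throughout, since the only property of $m$ used there was its $L^\infty$-boundedness together with Lemma~\ref{prop2.2}.

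There is essentially no obstacle here beyond the algebra property of Fourier multipliers; once $m_k\in L^\infty(\mathbb R^2)$ is established, the boundedness of $T^k$ on $\mathscr F^2(\mathbb C^2)$ and its integral kernel representation follow directly from Theorem~\ref{th1.1} and Lemma~\ref{le3.3}. One could equivalently prove the result by induction on $k$, writing $T^k=T\cdot T^{k-1}$ and computing the kernel of the composition via the reproducing formula \eqref{mmm}, which would yield the convolution identity $\varphi_k*_{\mathscr F}\varphi_1=\varphi_{k}$ in symbol form, i.e.\ $m_{k-1}\cdot m=m_k$; but going through the Bargmann transform is cleaner since the multiplicativity is already built into Proposition~\ref{prop3.1}.
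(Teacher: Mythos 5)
Your proposal is correct and follows essentially the same route the paper intends: the paper states Corollary~\ref{cor Beurling} as a direct consequence of Proposition~\ref{prop Beurling}, relying exactly on the fact that $\mathscr B^k$ is the Fourier multiplier operator with unimodular symbol $m_k=m^k$, after which Lemma~\ref{le3.3} (equivalently, repeating the computation in Proposition~\ref{prop Beurling} with $m$ replaced by $m_k$) yields the stated kernel representation. No gaps; your remark that only $m_k\in L^\infty(\mathbb R^2)$ and Lemma~\ref{prop2.2} are needed matches the paper's reasoning.
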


%\bigskip
%\bigskip
%\bigskip
%\bigskip

 \medskip

\section{Operator Theoretic Properties of the Operator $S_\varphi$}
\label{s:Operator}

In this section we study operator theoretic properties of the singular integral operator $S_\varphi$.
In particular, we are able to determine the normality, $C^*$ algebraic properties, compactness, and
the   spectrum of the operator $S_\varphi$. Moreover, we also obtain the reducing  subspaces of $S_{\varphi}$.

%and then  present some examples as  applications of our Theorem~\ref{th1.1}.

%\medskip\noindent
%{{\bf2}: Conditions on $m$ so that $S_{\varphi}$ is self-adjoint or normal}

\subsection{Normality of $S_\varphi$: Proof of  Theorem \ref{th1.3}}

\begin{proof}[Proof of Theorem \ref{th1.3}]

%From Theorem \ref{th1.3}, we can obtain more
%property of the singular integral operator
%$$S_\varphi f(z) = \int_{\mathbb C^n} f(w) e^{z\cdot \bar w} \varphi(z-\bar w) d\lambda(w).$$

For any $f,g\in {\mathscr F}^2(\mathbb C^n)$,
\begin{align*}
\langle  S^*_\varphi f,g \rangle_{\mathscr F^2(\mathbb C^n)}
&= \langle   f, S_\varphi g \rangle_{\mathscr F^2(\mathbb C^n)} =  \int_{\mathbb C^n} f(z) \overline{S_\varphi g(z) }d\lambda(z)\\
&=  \int_{\mathbb C^n} f(z) \overline{ \int_{\mathbb C^n} g(w) e^{z\cdot \bar w} \varphi(z-\bar w) d\lambda(w) }d\lambda(z)\\
&=  \int_{\mathbb C^n} f(z)  \int_{\mathbb C^n} \bar g(w) e^{\bar z\cdot  w} \overline{\varphi(z-\bar w) }d\lambda(w) d\lambda(z).
\end{align*}
Note that by Theorem~\ref{th1.1},
\begin{align*}
 \varphi(z-\bar w) =\left({2\over \pi}\right)^{n\over 2} \int_{\mathbb R^n} m(x) e^{-2 (x - {i\over2} (z-\bar w))^2}dx
\end{align*}
for some $L^{\infty}(\RR)$ function $m$ such  that
\begin{align*}
\tilde \varphi(w-\bar z)& :=\overline{ \varphi(z-\bar w)}=\left({2\over \pi}\right)^{n\over 2} \int_{\mathbb R^n} \overline{m(x)} e^{-2 (x + {i\over2} (\bar z- w))^2}dx =\left({2\over \pi}\right)^{n\over 2} \int_{\mathbb R^n} \overline{m(x)} e^{-2 (x - {i\over2} ( w- \bar z))^2 }dx.
\end{align*}
Thus, by Fubini's theorem,
\begin{align*}
\langle  S^*_\varphi f,g \rangle_{\mathscr F^2(\mathbb C^n)}
&=  \int_{\mathbb C^n} \int_{\mathbb C^n} f(z) \ \bar g(w)\ e^{\bar z\cdot  w}\ {\overline {\varphi(z-\bar w)}} d\lambda(z) d\lambda(w)\\
&=\int_{\mathbb C^n} \int_{\mathbb C^n} f(z)  \ e^{ w\cdot \bar z}\ \tilde \varphi(w-\bar z)\ d\lambda(z)\ \bar g(w)\ d\lambda(w).
\end{align*}

Hence, we have
\begin{align*}
S^*_\varphi f(z)
&= \int_{\mathbb C^n} f(w)  \ e^{ z\cdot \bar w}\ \tilde \varphi(z-\bar w)\ d\lambda(w)=: S_{\tilde\varphi}f(z).
\end{align*}

By noting that $S_\varphi S_{\psi}=S_{\psi} S_\varphi $ for any bounded operators $S_{\psi}$ and $S_{\varphi}$, we see that $S_{\varphi}$ is always normal.
%
%And
%\begin{align*}
%S_\varphi(S^*_\varphi f)(z)
%&= \int_{\mathbb C^n} (S^*_\varphi f)(w)  \ e^{ z\cdot \bar w}\  \varphi(z-\bar w)\ d\lambda(w)\\
%&= \int_{\mathbb C^n} \int_{\mathbb C^n} f(u) e^{w\cdot \bar u} \tilde \varphi(w-\bar u) d\lambda(u) \ e^{ z\cdot \bar w}\  \varphi(z-\bar w)\ d\lambda(w)\\
%&= \int_{\mathbb C^n} \int_{\mathbb C^n} f(u) e^{w\cdot \bar u} \tilde \varphi(w-\bar u) d\lambda(u) \ e^{ z\cdot \bar w}\  \varphi(z-\bar w)\ d\lambda(w)\\
%&= \int_{\mathbb C^n} \int_{\mathbb C^n} f(u) e^{w\cdot \bar u+z\cdot \bar w} \tilde \varphi(w-\bar u)  \  \varphi(z-\bar w)\ d\lambda(w)\ d\lambda(u).
%\end{align*}
%Moreover,
%\begin{align*}
%S^*_\varphi (S_\varphi f)(z)
%&= \int_{\mathbb C^n} (S_\varphi f)(w)  \ e^{ z\cdot \bar w}\ \tilde \varphi(z-\bar w)\ d\lambda(w)\\
%&= \int_{\mathbb C^n}\int_{\mathbb C^n} f(u) \ e^{w\cdot \bar u}\ \varphi(w-\bar u) \ e^{ z\cdot \bar w}\ \tilde \varphi(z-\bar w)\ d\lambda(w)d\lambda(u)\\
%&= \int_{\mathbb C^n}\int_{\mathbb C^n} f(u) \ e^{w\cdot \bar u+z\cdot \bar w}\ \varphi(w-\bar u) \ \tilde \varphi(z-\bar w)\ d\lambda(w)d\lambda(u).
%\end{align*}
%
%Recall that
%$$ \varphi(w-\bar u) =\int_{\mathbb R^n} m(x) e^{-2( x- {i\over 2}(w-\bar u) )^2}dx$$
%and
%$$ \tilde \varphi(z-\bar w) =\int_{\mathbb R^n} \overline{m(x)} e^{-2( x- {i\over 2}(z-\bar w) )^2}dx.$$
%
%
This finishes the proof of Theorem \ref{th1.3}.
\end{proof}

\medskip

%\medskip\noindent
%{{\bf3}: $S_\varphi$ forms an algebra}

\subsection{$C^*$-Algebra Generated by $S_\varphi$, Spectrum and Compactness of the Operator $S_\varphi$}

As applications of Theorems \ref{th1.1} and \ref{th1.3}, we can now figure out the $C^*$-algebra,
 the spectrum and the compactness of the operator $S_\varphi$, which were all unknown before. 
 This in turn shows the importance of our Theorem \ref{th1.1}.  
 Here and in what follows, we denote by $M_m f= m\cdot f$  the multiplication operator
  $ M_m$ on $L^2(\mathbb R^n)$ for a function $m$ in $L^\infty(\mathbb R^n)$.

\medskip
\noindent {\bf 5.2.1.\ $C^*$-Algebra Generated by $S_\varphi$}.\
We first have the following result.

\begin{thm}\label{prop4.1}
$\mathscr A:=\{ S_\varphi:\ S_\varphi \ {\rm is\ bounded\ on\ } \mathscr F^2(\mathbb C^n) \}$ is a commutative $C^*$-algebra.
\end{thm}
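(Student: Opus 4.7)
The plan is to exhibit $\mathscr A$ as the unitary image of the commutative $C^*$-algebra of multiplication operators
$$\mathcal M := \{M_m : m \in L^\infty(\mathbb R^n)\} \subset \mathcal B(L^2(\mathbb R^n)),$$
which is classically isometrically $*$-isomorphic to $L^\infty(\mathbb R^n)$. By Lemmas \ref{le3.2}, \ref{le3.3} and Theorem \ref{th1.1}, each bounded $S_\varphi$ on $\mathscr F^2(\mathbb C^n)$ admits the factorization $S_\varphi = B\mathcal F^{-1}M_m\mathcal F B^{-1}$ for a unique $m \in L^\infty(\mathbb R^n)$, and conversely each $m \in L^\infty(\mathbb R^n)$ gives rise in this way to an element of $\mathscr A$. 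Therefore the map
$$\Phi: L^\infty(\mathbb R^n) \longrightarrow \mathscr A, \qquad m \longmapsto B\mathcal F^{-1}M_m\mathcal F B^{-1}$$
is a bijection, and moreover an isometry by the norm identity $\|S_\varphi\| = \|m\|_{L^\infty}$ stated in Theorem \ref{th1.1}.

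First I would check that $\Phi$ is a $*$-algebra homomorphism. Linearity is immediate from the linear dependence of $\varphi$ on $m$ in \eqref{e1.2}. For the product, since $B$ and $\mathcal F$ are unitary and $M_{m_1}M_{m_2} = M_{m_1 m_2}$ pointwise, a direct computation yields
$$S_{\varphi_1}S_{\varphi_2} \;=\; B\mathcal F^{-1}M_{m_1}M_{m_2}\mathcal F B^{-1} \;=\; B\mathcal F^{-1}M_{m_1 m_2}\mathcal F B^{-1} \;=\; \Phi(m_1 m_2),$$
so $\mathscr A$ is closed under composition and, since pointwise multiplication in $L^\infty(\mathbb R^n)$ is commutative, so is $\mathscr A$. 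For the involution, Theorem \ref{th1.3} gives $S_\varphi^* = S_{\tilde\varphi}$, and the formula for $\tilde\varphi$ there exhibits $\tilde\varphi$ as the element of $\mathscr F^2(\mathbb C^n)$ corresponding to the multiplier $\overline m$; hence $\Phi(\overline m) = \Phi(m)^*$.

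Finally, the norm closure of $\mathscr A$ (which promotes it from a $*$-subalgebra to a genuine $C^*$-algebra) is transferred from the completeness of $L^\infty(\mathbb R^n)$ through the isometry $\Phi$: any Cauchy sequence $\{S_{\varphi_k}\}$ in the operator norm on $\mathscr F^2(\mathbb C^n)$ corresponds via $\Phi^{-1}$ to a Cauchy sequence $\{m_k\}$ in $L^\infty(\mathbb R^n)$, which converges to some $m \in L^\infty(\mathbb R^n)$; the operator $\Phi(m) = S_\varphi$, with $\varphi$ given by \eqref{e1.2}, is then the operator-norm limit of $\{S_{\varphi_k}\}$ and thus lies in $\mathscr A$. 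I do not anticipate a serious obstacle here: the heavy lifting is contained in Theorems \ref{th1.1} and \ref{th1.3}, and the only point that requires any verification is that the composition of two operators of the form $S_\varphi$ is again of this form, which becomes transparent once one conjugates by $B\mathcal F^{-1}$ and reads the computation off on the multiplication-operator side of the picture.
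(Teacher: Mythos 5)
Your proposal is correct and follows essentially the same route as the paper: both identify each bounded $S_\varphi$ with $B\mathcal F^{-1}M_m\mathcal F B^{-1}$ via Theorem \ref{th1.1}, reduce products to $M_{m_1}M_{m_2}=M_{m_1m_2}$, handle adjoints via Theorem \ref{th1.3}, and conclude $\mathscr A\cong L^\infty(\mathbb R^n)$. Your explicit verification of norm-closedness through the isometry is a small additional care that the paper leaves implicit, but it is not a different approach.
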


\begin{proof}
By Theorem \ref{th1.1}, we know that for any $\varphi\in \mathscr F^2(\mathbb C^n),$ $S_\varphi$ is bounded if and only if there is an $m\in L^\infty(\mathbb R^n)$ such that
\eqref{e1.2} holds, and thus $S_\varphi = BTB^{-1}$, where $T\in\mathscr M^{2,2}(\mathbb R^n)$ with
$\mathcal{F}(Tf)(y)=m(y)\mathcal{F}f(y)$.

Hence, we have $ S_\varphi(f)(z) = B \mathcal F^{-1} M_m \mathcal F B^{-1} f(z) $, where $M_mf = m\cdot f$ for
 $f\in L^2(\mathbb R^n)$. If $\varphi_1$ and $\varphi_2$ are in 
 ${\mathscr F}^2(\mathbb C^n)$ such that both $S_{\varphi_1}$ and $S_{\varphi_2}$ are bounded, 
 then there are $m_1$ and $m_2$ in $L^\infty(\mathbb R^n) $ such that
 $S_{\varphi_1} f =B \mathcal F^{-1} M_{m_1} \mathcal F B^{-1} f $ and $S_{\varphi_2} f =B \mathcal F^{-1} M_{m_2} \mathcal F B^{-1} f.$

Furthermore,
\begin{align*}
S_{\varphi_1}S_{\varphi_2}f &= B \mathcal F^{-1} M_{m_1} \mathcal F B^{-1} \big(B \mathcal F^{-1} M_{m_2} \mathcal F B^{-1} f \big)\\
&=B \mathcal F^{-1} M_{m_1}M_{m_2} \mathcal F B^{-1} f \\
&=B \mathcal F^{-1} M_{m_1\cdot m_2} \mathcal F B^{-1} f,
\end{align*}
which shows that
$S_{\varphi_1}S_{\varphi_2} = S_{\varphi}$, where
$$ \varphi(z) = \left({2\over \pi}\right)^{n\over2}\int_{\mathbb R^n} m_1(x)m_2(x) e^{  -2( x- {i\over2}z )^2 } dx.$$

This shows that $ \mathscr A$
%$$ \mathscr A :=\{ S_\varphi: S_\varphi {\rm\ is\ bounded\ on\ } \mathcal F_1 \} $$
is an algebra on $\mathscr F^2 (\mathbb C^n) $.
 Since $S^*_\varphi = S_{\tilde\varphi}$,  and $ S_\varphi S_\psi=S_\psi S_\varphi $
for any $S_\varphi,S_\psi\in \mathscr A$, we see that
$\mathscr A$ is a commutative $C^*$- algebra.  In fact,
$$\mathscr A \cong L^\infty(\mathbb R^n)$$
with the isomorphism map $\mathfrak h: S_\varphi\to m$ for
$$ \varphi(z) = \left({2\over \pi}\right)^{n\over2}\int_{\mathbb R^n} m(x) e^{  -2( x- {i\over2}z )^2 } dx.$$
This finishes the proof of Theorem \ref{prop4.1}.
\end{proof}

\begin{rem}
%{\color{red}Note that $L^{\infty}(\RR)$ is a maximal commutative $w^{\ast}$-algebra, also is $\mathscr A$}.
Note that $L^{\infty}(\RR)$ is a maximal commutative $w^{\ast}$-algebra in $L^2(\RR)$ (see for example \cite[Theorem 4.58]{Do1}). Moreover, from the proof of Theorem
 \ref{prop4.1} we see that $\mathscr A \cong L^\infty(\mathbb R^n)$. Hence, we get that
 $\mathscr A$ is also a maximal commutative $w^{\ast}$-algebra in $\mathscr F^2 (\mathbb C^n)$.
 Thus,  for any bounded
linear operator $T$ on $\mathscr F^2 (\mathbb C^n) $, $T\in  {\mathscr A}$ if and only if
$T S_{\varphi}=  S_{\varphi} T$ for any $ S_{\varphi}\in  {\mathscr A}$. It should be pointed out that   ${\mathscr A}$ has zero factors, in fact, if $m_1,m_2\in L^{\infty }(\mathbb R^n)$ satisfy
$
\left\vert \operatorname{supp} m_1\cap \operatorname{supp} m_2\right\vert=0,
$
then $S_{\varphi_{1}}S_{\varphi_{2}}=0$, where  ${\varphi_{1}}$ and $ {\varphi_{2}}$ are defined as in \eqref{e1.2} for $m_1,m_2.$

One may be concerned that the result in \cite[Theorem 4.58]{Do1} is for a compact Hausdorff space $X$ while we applied it for $X=\mathbb R^n$, which is not compact. However, in this case, all we need to do is first to apply it on a large fixed ball centered at the origin with radius $k$ in $\mathbb R^n$ and then pass to $\mathbb R^n$ by letting $k\to\infty$.  We omit the details here.
\end{rem}

\begin{rem}
If $m(x)$ is a real-valued function, then $\varphi=\tilde\varphi$. Thus, $S^*_\varphi =S_\varphi $, that is,
$S_\varphi $ is self-adjoint. If  $m(x)$ is the function taking purely imaginary values, then
$\tilde\varphi=-\varphi$.
%Thus, $S^*_\varphi S_\varphi = S_\varphi  S^*_\varphi  $, that is, $S_\varphi $ is a normal operator on $\mathscr F^2(\mathbb C^n)$.
Thus, $S^*_\varphi =-S_\varphi $, that is, $S_\varphi$ is anti self-adjoint.  For example, if $S_\varphi = BHB^{-1}$, then $S_\varphi $ is anti self-adjoint.
%In fact,
%$S^*_\varphi = -S_\varphi  $.
\end{rem}

\medskip
\noindent {\bf 5.2.2.\ Spectrum of the operator $S_\varphi$}.\
The computation of the spectrum of an operator $T$ is usually a difficult problem even if $T$ is normal (which our $S_\varphi$ are). But, in this particular case, using the connection with the Fourier multipliers it is possible to rather easily compute the spectrum of $\sigma(S_{\varphi})$ in a  very concise way. Perhaps the proofs of the results in this section are very difficult if one resorts to methods of analytic function theory.
In general, a normal operator may have different spectrum and essential spectrum since the spectrum may contain isolated eigenvalues with finite multiplicity. However, for $\varphi \in \mathscr F^2(\mathbb C^n)$, if $S_{\varphi }$ is bounded, we can prove that the spectrums coincide. Moreover, we also study the eigenvalue of $S_{\varphi }$, as well as the approximate point spectrum.

\begin{thm}\label{th spec}
Suppose  $\varphi\in \mathscr F^2(\mathbb C^n)$ such that $S_\varphi$ is bounded on $\mathscr F^2(\mathbb C^n)$ and $\varphi$ is defined as in \eqref{e1.2} for some $m\in L^\infty(\mathbb R^n).$
Then we have
\begin{itemize}
\item[(1)] $\sigma(S_\varphi) = \mathcal R(m)(\mathbb R^n)$, where $\mathcal R(m)(\mathbb R^n)$
 is the essential range of $m$;
\item[(2)] $\mu\in \mathcal R(m)(\mathbb R^n)$ is an eigenvalue of $S_\varphi$ if and only if
$ \left\vert \{x: m(x)=\mu\} \right\vert>0;  $
\item[(3)]
$\sigma(S_\varphi)=\sigma_a(S_\varphi)$, where $\sigma_a(S_\varphi)$ denotes the approximate point spectrum of $S_\varphi$;
\item[(4)]  $\sigma(S_{\varphi })=\sigma_e(S_{\varphi })$, where $\sigma_e(S_\varphi)$ denotes the essential spectrum of $S_\varphi$.
\end{itemize}
\end{thm}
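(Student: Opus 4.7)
The plan is to reduce all four statements to the standard spectral theory of the multiplication operator $M_m$ on $L^2(\mathbb R^n)$. The key observation is that by Lemmas \ref{le3.2} and \ref{le3.3} one has $S_\varphi = B T B^{-1}$ where $T = \mathcal F^{-1} M_m \mathcal F$; since both $B$ and $\mathcal F$ are unitary, $S_\varphi$ is unitarily equivalent to $M_m$. Unitary equivalence preserves the spectrum, eigenvalues together with their multiplicities, the approximate point spectrum, and the essential spectrum, so the entire theorem follows once the four statements are verified for $M_m$.

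For part (1), I would argue as follows. If $\mu \notin \mathcal R(m)(\mathbb R^n)$ then there exists $\delta>0$ such that $|m(x)-\mu|\geq\delta$ almost everywhere; hence $1/(m-\mu)\in L^\infty(\mathbb R^n)$ and $M_{1/(m-\mu)}$ is a bounded inverse of $M_m-\mu I$, so $\mu\notin\sigma(M_m)$. Conversely, if $\mu\in\mathcal R(m)(\mathbb R^n)$, then for every $\varepsilon>0$ the set $E_\varepsilon=\{x:|m(x)-\mu|<\varepsilon\}$ has positive Lebesgue measure; choosing a measurable subset $F_\varepsilon\subset E_\varepsilon$ of finite positive measure and normalising $\chi_{F_\varepsilon}$ in $L^2$ produces an approximate eigenvector, showing that $M_m-\mu I$ is not bounded below and hence $\mu\in\sigma(M_m)$. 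The same construction simultaneously gives part (3), but it is cleaner to deduce (3) directly from the normality of $S_\varphi$ proved in Theorem \ref{th1.3}, since for any normal operator on a Hilbert space one has $\sigma = \sigma_a$.

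For (2), the equation $M_m f=\mu f$ holding a.e.\ forces $(m-\mu)f=0$ a.e., so any eigenvector must vanish off the set $\{m=\mu\}$, and conversely every nonzero $f\in L^2$ supported in $\{m=\mu\}$ is an eigenvector with eigenvalue $\mu$; such an $f$ exists if and only if $|\{m=\mu\}|>0$.

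The only place I expect real content is (4), and it is where I would focus attention. The strategy is to show that every eigenvalue of $M_m$ has infinite multiplicity, so that no point of $\sigma(S_\varphi)$ can be an isolated eigenvalue of finite multiplicity, which forces $\sigma_e(S_\varphi)=\sigma(S_\varphi)$ under any standard definition of the essential spectrum of a normal operator. Indeed, by part (2), any eigenvalue $\mu$ satisfies $|\{m=\mu\}|>0$, and since Lebesgue measure on $\mathbb R^n$ is atomless one can partition $\{m=\mu\}$ into countably many disjoint measurable sets of positive measure; the corresponding characteristic functions are linearly independent elements of $\ker(M_m-\mu I)$, giving an infinite-dimensional eigenspace. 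The only subtlety is to confirm that this rules out membership in $\sigma(S_\varphi)\setminus\sigma_e(S_\varphi)$ regardless of which equivalent formulation of $\sigma_e$ one adopts, but for a normal operator all the common formulations agree on this point.
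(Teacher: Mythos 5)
Your proposal is correct, and its skeleton coincides with the paper's: both reduce everything to the multiplication operator $M_m$ through the unitary equivalence $S_\varphi = B\,\mathcal F^{-1} M_m \mathcal F\, B^{-1}$, and your arguments for (1) and (2) are essentially what the paper does (for (1) the paper simply calls the argument routine via the isomorphism $\mathfrak h: S_\varphi \mapsto m$; your explicit two-sided verification with $1/(m-\mu)\in L^\infty(\mathbb R^n)$ on one side and normalized characteristic functions of $\{|m-\mu|<\varepsilon\}$ on the other fills in that step). The divergence is in (3) and, more substantially, (4). For (3) the paper constructs approximate eigenvectors $\chi_\varepsilon f_\varepsilon$ by hand, whereas you invoke normality of $S_\varphi$ (Theorem \ref{th1.3}) together with the general fact that $\sigma=\sigma_a$ for normal operators; both are fine, yours being shorter but less self-contained. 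For (4) the paper shows directly that each spectral point (translated to $0$) fails to be a Fredholm point: it builds a weakly null sequence of normalized characteristic functions $f_k$ of nested sets $E_k\subset\{|m|<\varepsilon\}$ with $|E_k|\to0$ and $\Vert M_m f_k\Vert\le\varepsilon$, i.e.\ a singular Weyl sequence, and this same sequence is recycled in the compactness result (Theorem \ref{th4.7}). You instead prove that every eigenvalue of $M_m$ has infinite multiplicity, by partitioning $\{m=\mu\}$ into countably many sets of positive measure using the non-atomicity of Lebesgue measure, and then quote the standard description of $\sigma\setminus\sigma_e$ for a normal operator as the set of isolated eigenvalues of finite multiplicity. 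That is a valid and arguably cleaner route; its only cost is the appeal to Weyl-type spectral theory for normal operators (including the agreement of the usual definitions of $\sigma_e$ in that class), where the paper's construction stays elementary and exhibits the singular sequence explicitly.
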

\begin{proof}
We now provide the proof for these four statements.

\medskip
Proof of (1): this argument is routine by the isomorphism $\mathfrak h:\ S_\varphi\to m$.

\medskip

Proof of (2): for any $\mu\in \mathcal R(m)(\mathbb R^n)$, if $ \left\vert \{x: m(x)=\mu\} \right\vert>0,  $ then
write $\chi_\mu(x) = \chi_{\{x\,: \,m(x)=\mu\}  }(x)$. Without loss of generality, assume
$\left\vert \{x: m(x)=\mu\}\right\vert<\infty$. Then $(M_m-\mu) \chi_\mu =0$ and
$$ \int_{\mathbb R^n}\chi_\mu dx = \left\vert \{x: m(x)=\mu\}\right\vert>0. $$
This shows that $\mu\in \sigma_p({M}_m)$, further $\mu\in \sigma(S_\varphi)$.

On the other hand, if $ \left\vert \{x: m(x)=\mu\} \right\vert=0,$ we can prove that $\mu\not\in \sigma_p(M_m)$.
In fact, for any $f\in L^2(\mathbb R^n)$, if ${ M}_{\mu} f = \mu f$, then $f=0$ on $\mathbb R^n\backslash  \{x: m(x)=\mu\}$. 
 Hence, $f=0$ a.e. since $ \left\vert  \{x: m(x)=\mu\} \right\vert=0$. Thus $\mu\not\in \sigma_p({ M}_m)$, 
 and consequently $\mu\not\in \sigma_p(S_\varphi)$.

\medskip

Proof of (3): for any $m\in L^\infty(\mathbb R^n)$, write ${ M}_mf = m\cdot f$, for every $f\in L^2(\mathbb R^n)$.

Assume $\mu\in\mathcal R(m)(\mathbb R^n)$, the essential range of $m$.  Then $\left\vert \{ x: |m(x)-\mu|<\epsilon \} \right\vert>0$ for any $\epsilon>0$.
Let $\chi_\epsilon(x) = \chi_{ \{ x:\ |m(x)-\mu|<\epsilon \} }(x)$ be the characteristic function of $\{ x: |m(x)-\mu|<\epsilon \}$. Choose a function $f_\epsilon\in L^2(\mathbb R^n)$ such that
$$ \|\chi_\epsilon f_\epsilon\|^2_{L^2(\RR)} = \int_{\mathbb R^n} |\chi_\epsilon f_\epsilon|^2dx   = \int_{ \{ x: |m(x)-\mu|<\epsilon \} } |f_\epsilon|^2dx=1. $$
We have
\begin{align*}
\| ( {M}_m-\mu ) (\chi_\epsilon f_\epsilon) \|^2_{L^2(\RR)} &= \int_{\mathbb R^n} |( { M}_m-\mu ) (\chi_\epsilon f_\epsilon)|^2dx \\
&= \int_{ \{ x\, :\, |m(x)-\mu|<\epsilon \} } |( { M}_m-\mu )|^2 |f_\epsilon|^2dx \\
&\leq \epsilon^2 \int_{ \{ x\,:\, |m(x)-\mu|<\epsilon \} }  |f_\epsilon|^2dx \\
&\leq \epsilon^2.
\end{align*}
This implies that $\mu\in\sigma_a({ M}_m)$, further $\mu\in \sigma_a(S_\varphi)$.

\medskip

Proof of (4): from (1) we see that $\sigma(S_\varphi) = \mathcal R(m)(\mathbb R^n)$. Hence, without loss of generality, we now just assume that $0\in  \mathcal R(m)(\mathbb R^n)$. Then for any $\epsilon >0$, we have
$$
\left\vert \{x: |m(x)|<\epsilon \}\right\vert>0.
$$
Choose a sequence of subsets in $\{x: |m(x)|<\epsilon \}$ such that
$$
E_{k+1}\subset E_k\subset \{x: |m(x)|<\epsilon \}$$
and
$
\left\vert E_k\right\vert\ne 0, $ $\left\vert E_k\right\vert\rightarrow 0 $  as $k\rightarrow \infty.$ Set
\begin{align}\label{f_k}
f_{k}(x)=\frac{1}{\sqrt{\left\vert E_k\right\vert}}\chi_{E_{k}}(x),
\end{align}
where $\chi_{E_k} $ is the characteristic function of $E_k$,
%then $\|f_{k}\|=1,$ and $f_{k}\rightarrow 0$ weakly in $L^2(\mathbb R^n).$
then $$ \|f_k\|^2_{L^2(\RR)} = \int_{E_k} {1\over { \left\vert E_k\right\vert}}dx=1 $$
and for any $g\in L^2(\mathbb R^n)$,
\begin{align*}
|\langle f_k, g\rangle_{L^2(\RR)}| &=  {1\over \sqrt{ \left\vert E_k\right\vert}} |\langle\chi_{E_k},g \rangle_{L^2(\RR)}|
\leq  {1\over \sqrt{ \left\vert E_k\right\vert}} \|\chi_{E_k}\|_{L^2(\RR)}\ \|\chi_{E_k}\, g\|_{L^2(\RR)}=\|\chi_{E_k}\, g\|_{L^2(\RR)}.
\end{align*}
Note that $g\in L^2(\mathbb R^n)$, we have that $\|\chi_{E_k}\, g\|\to0 $ as $k\to\infty$.
This implies that $f_k\to 0$ in $L^2(\mathbb R^n)$ in the weak sense.

It is not difficult to see that
\begin{align*}
\| ({ M}_m f_k) \|^2_{L^2(\RR)} &= \int_{E_{k}} |m f_k|^2dx +\int_{\mathbb R^n\setminus E_k}|m  f_k|^2dx 
= \int_{ E_k} |m f_k|^2dx 
\leq \epsilon^2 \int_{ E_k}  |f_k|^2dx 
=\epsilon^2.
\end{align*}
Since $\epsilon $ is arbitrary, we see that ${ M}_{m}$ is not Fredholm, that is $0\in \sigma_{e}({ M}_m),$ further $0\in \sigma_e(S_{\varphi }).$

\medskip

The proof of Theorem \ref{th spec} is complete.
\end{proof}

%\begin{cor}\label{cor4.3}
%Suppose  $\varphi\in \mathscr F^2(\mathbb C^n)$ such that $S_\varphi$ is bounded on $\mathscr F^2(\mathbb C^n)$ and $\varphi$ is defined as in \eqref{e1.2} for some $m\in L^\infty(\mathbb R^n).$
%Then  we have $\sigma(S_\varphi) = \mathcal R(m)(\mathbb R^n)$, where $\mathcal R(m)(\mathbb R^n)$
% is the essential range of $m$.
%\end{cor}
%\begin{proof}
%It is routine by the isomorphism $\mathfrak h:\ S_\varphi\to m$.
%\end{proof}
%\begin{rem}\label{rem5.5}
%$$
%\sigma(S_{\varphi })=\sigma_e(S_{\varphi }).
%$$

%\end{rem}

%\begin{thm}\label{th4.4}
%Suppose  $\varphi\in \mathscr F^2(\mathbb C^n)$  such that $S_\varphi$ is bounded on $\mathscr F^2(\mathbb C^n)$ and $\varphi$ is defined as in \eqref{e1.2} for some $m\in L^\infty(\mathbb R^n).$ Then $\sigma(S_\varphi)=\sigma_a(S_\varphi)$, where $\sigma_a(S_\varphi)$ denotes the approximate point spectrum of $S_\varphi$.
%\end{thm}
%\begin{proof}
%\end{proof}

%\begin{rem}
%Note that $S_\varphi = B\mathcal F^{-1 }T_m \mathcal F B^{-1}$. Hence we have that
%for $\mu\in \mathcal R(m)(\mathbb R^n)$,
%
%$$\mu\in \sigma_p(T_m) {\rm\ if\ and \ only\ if\ \ } L_n( E_\mu )>0,$$
%
%where
%$ E_\mu =\{ x:\ m(x)=\mu \}$.
%\end{rem}

%\begin{prop}\label{th4.6}
%Suppose $\varphi\in \mathscr F^2(\mathbb C^n)$  such that $S_\varphi$ is bounded on $\mathscr F^2(\mathbb C^n)$ and $\varphi$ is defined as in \eqref{e1.2} for some $m\in L^\infty(\mathbb R^n).$
%Then $\mu\in \mathcal R(m)(\mathbb R^n)$ is the eigenvalue of $S_\varphi$ if and only if
%$$ \left\vert \{x: m(x)=\mu\} \right\vert>0.  $$
%\end{prop}
%\begin{proof}
%\end{proof}

\medskip
\noindent {\bf 5.2.3.\ Compactness of the Operator $S_\varphi$}.\ Next we provide the proof of the compactness of the operator $S_\varphi$.

\begin{thm}\label{th4.7}
Suppose $\varphi\in \mathscr F^2(\mathbb C^n)$ such that $S_\varphi$ is bounded on $\mathscr F^2(\mathbb C^n)$ and $\varphi$ is defined as in \eqref{e1.2} for some $m\in L^\infty(\mathbb R^n).$ Then $S_\varphi$
is compact if and only if $\varphi=0$.
\end{thm}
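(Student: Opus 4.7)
The plan is to transfer the question to the Fourier-multiplier side via the identity $S_\varphi = BTB^{-1}$ with $T = \mathcal F^{-1} M_m \mathcal F$, established in the proof of Theorem~\ref{th1.1} and exploited throughout Section~\ref{s:Operator}. Since $B$ and $\mathcal F$ are unitary, $S_\varphi$ is compact on $\mathscr F^2(\CC)$ if and only if the multiplication operator $M_m$ is compact on $L^2(\RR)$. The direction $\varphi = 0 \Rightarrow S_\varphi$ compact is trivial; the content is the converse, which reduces to showing that if $m$ is not the zero element of $L^\infty(\RR)$, then $M_m$ fails to be compact.

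For this I would construct a weakly null orthonormal sequence on which $M_m$ stays bounded below, essentially mimicking the sequence \eqref{f_k} from the proof of Theorem~\ref{th spec}(4). If $m$ is not zero a.e., then there exists $\epsilon > 0$ such that the set $E := \{x\in\RR : |m(x)| \geq \epsilon\}$ has positive Lebesgue measure. Because Lebesgue measure on $\RR$ is non-atomic and $\sigma$-finite, one can partition $E$ into a countable family $\{E_k\}$ of pairwise disjoint measurable subsets with $0 < |E_k| < \infty$. Set $f_k := |E_k|^{-1/2}\chi_{E_k}$. Then $\{f_k\}$ is orthonormal in $L^2(\RR)$ and therefore tends weakly to zero, while
\[
\|M_m f_k\|_{L^2(\RR)}^2 = \frac{1}{|E_k|}\int_{E_k}|m(x)|^2\,dx \geq \epsilon^2,
\]
so $M_m f_k$ cannot tend to zero in norm. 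Consequently $M_m$, and hence $S_\varphi$, fails to be compact, forcing $m = 0$ a.e.\ and, by \eqref{e1.2}, $\varphi \equiv 0$.

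The only real obstacle is conceptual rather than technical: compactness is awkward to test directly on the kernel $e^{z\cdot\bar w}\varphi(z-\bar w)$ in \eqref{e1.1}, but the Bargmann reduction renders it routine. An equally short alternative would be to invoke Theorem~\ref{th spec}(4): any compact operator on the infinite-dimensional Hilbert space $\mathscr F^2(\CC)$ has essential spectrum $\{0\}$, so compactness of $S_\varphi$ yields $\mathcal R(m)(\RR) = \sigma_e(S_\varphi) \subseteq \{0\}$, whence $m = 0$ a.e.\ and $\varphi \equiv 0$.
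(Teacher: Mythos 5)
Your proposal is correct and follows essentially the same route as the paper: conjugate by the Bargmann and Fourier transforms to reduce compactness of $S_\varphi$ to compactness of the multiplication operator $M_m$ on $L^2(\mathbb R^n)$, then exhibit a weakly null sequence of normalized indicator functions supported in $\{x:|m(x)|\ge\epsilon\}$ on which $\|M_m f_k\|_{L^2(\mathbb R^n)}\ge\epsilon$. The only cosmetic difference is that you take pairwise disjoint sets (so the $f_k$ are orthonormal, hence weakly null) where the paper uses nested sets $E_k$ with $|E_k|\to 0$, and your alternative shortcut via part (4) of Theorem~\ref{th spec} rests on the same construction already carried out there.
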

\begin{proof}
We need only to prove that $S_\varphi$ can not be compact if $\varphi\not=0$. Since $\varphi\not=0$, we see that $m\not=0$.

Write $E_0=\{x: m(x)\not=0\}$. Then $\left\vert E_0\right\vert>0$. Thus, there is an $\epsilon_0>0$ such that
$E_{\epsilon_0} =\{ x: |m(x)|>\epsilon_0\}$ has positive measure. Without loss of generality, assume that
$0<\left\vert E_{\epsilon_0}\right\vert <\infty$. Choose a sequence of subsets in $E_{\epsilon_0}$ such that
$E_{\epsilon_0} \supset E_k \supset E_{k+1}$, and $\left\vert E_k\right\vert>0$, $\lim\limits_{k\to \infty} \left\vert E_k\right\vert =0.$
Let $f_k(x)$ be defined as in \eqref{f_k}. Then from the argument as in the proof of (4) of Theorem \ref{th spec}, we see that $f_k\to 0$ in $L^2(\mathbb R^n)$ in the weak sense.
%$$ f_k(x) = {1\over \sqrt{ L_n(E_k)}} \chi_{E_k}(x), $$

It is obvious that
\begin{align*}
\|{ M}_mf_k\|^2_{L^2(\RR)} = \int_{\mathbb R^n} |m\cdot f_k|^2dx \geq \int_{ E_k} |m\cdot f_k|^2dx
\geq \epsilon_0^2 \int_{ E_k} | f_k|^2dx =\epsilon_0^2\not\to0.
\end{align*}
This shows that ${ M}_m$ is not compact, and hence $S_\varphi$ can not be compact.
\end{proof}

%\bigskip

%\bigskip

\subsection{Invariant  subspaces of $S_{\varphi}$}
%{(For intrduction)
The well-known Beurling theorem characterizes the invariant subspace lattice of the coordinate multiplier ${ M}_z$ on
 the Hardy space $H^2(\mathbb{T})$ of the unit circle  $\mathbb{T}$ (see  \cite{Do1, Ga}). However, it is very difficult
  to obtain the characterization of the invariant subspace lattice of a general bounded linear operator $T$ even if $T $ is normal.
%\medskip
One possible attempt arises from observing that the reducing subspaces of a normal operator may be determined by 
its spectral projections. However, in general, one does not know the explicit form of the spectral projections.

In this subsection, we  characterize the reducing subspaces of ${ M}_m$ for any $m\in L^{\infty }(\mathbb{R}^n).$ 
Moreover, based on our main result Theorem \ref{th1.1}, we can further obtain the characterization of the reducing 
subspaces of $S_{\varphi }$ with $\varphi$  defined as \eqref{e1.2} for some $m\in L^{\infty }(\mathbb{R}^n)$.

It is easy to prove that for $m\in L^{\infty }(\mathbb{R}^n)$, $R({ M}_m)$ is closed if and only if either $0\notin \mathscr{R}(m), $
or $0\in \mathscr{R}(m),$ but $m$ is essentially lower bounded on $\operatorname{supp} m,$ the support of $m.$  In particular,
 if $E \subset \mathbb{R}^n$  with $\left\vert E\right\vert>0,$ then for $\chi_{E},$ the characteristic function of $E$,
  $X_0=\chi_{E} L^2(\mathbb{R}^n)$ is an invariant subspace or zero subspace of ${ M}_{m}.$ Thus we have the following.

\begin{thm} Suppose $m\in L^{\infty }(\mathbb{R}^n)$ and  $ \varphi\in \mathscr{F}^2(\mathbb{C}^n)$ is defined as in \eqref{e1.2}.
  Let $X$ be a subspace of $\mathscr{F}^2(\mathbb{C}^n).$ Then ${  X}$ is a reducing subspace of $S_{\varphi }$  if and only if there is a set $E\subset \mathscr{R}(m)$ with $\left\vert E\right\vert>0,$  such that
$$
X=S_{\varphi_0}\mathscr{F}^2(\mathbb{C}^n),
$$
where $\varphi_0=\int_{\mathbb{R}^n}\chi_{E}(x)e^{-2\left(x-\frac{i}{2}   z  \right)^2} dx.$
\end{thm}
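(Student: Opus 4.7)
The strategy is to transfer the question from $\mathscr{F}^2(\mathbb{C}^n)$ to $L^2(\mathbb{R}^n)$ via the unitary $U := \mathcal{F}B^{-1}$, which by Theorem~\ref{th1.1} intertwines $S_\varphi$ with the multiplication operator $M_m$. Since $U$ is unitary, $X$ is reducing for $S_\varphi$ if and only if $UX$ is reducing for $M_m$, so the task reduces to the classical problem of characterizing reducing subspaces of a multiplication operator.

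For the ``if'' direction I would fix a measurable $E$ with $|E|>0$ and let $\varphi_0$ be built from $\chi_E$ via \eqref{e1.2}. From the proof of Theorem~\ref{prop4.1}, $S_{\varphi_0} = U^{-1} M_{\chi_E} U$. Since $\chi_E$ is real-valued and idempotent, $M_{\chi_E}$ is the orthogonal projection of $L^2(\mathbb{R}^n)$ onto $\chi_E L^2(\mathbb{R}^n)$; by unitary equivalence, $S_{\varphi_0}$ is then an orthogonal projection in $\mathscr{F}^2(\mathbb{C}^n)$ onto $X := U^{-1}(\chi_E L^2(\mathbb{R}^n))$. The subspace $\chi_E L^2(\mathbb{R}^n)$ is invariant under both $M_m$ and $M_m^* = M_{\bar m}$ (since multiplication operators on $L^2(\mathbb{R}^n)$ commute), so it reduces $M_m$, and pulling back through $U^{-1}$ shows that $X$ reduces $S_\varphi$.

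For the ``only if'' direction, suppose $X$ is a reducing subspace of $S_\varphi$ and set $Y := UX$, a reducing subspace of $M_m$. The orthogonal projection $P_Y$ commutes with both $M_m$ and $M_m^*$, and hence with every operator in the von Neumann algebra $W^*(M_m)$. Combining the spectral theorem for the normal operator $M_m$ with the maximal abelianness of $L^\infty(\mathbb{R}^n)$ in $B(L^2(\mathbb{R}^n))$ (the fact invoked in the remark after Theorem~\ref{prop4.1}), one concludes that $P_Y = M_{\chi_A}$ for some measurable set $A$ lying in the $\sigma$-algebra generated by $m$, equivalently $A = m^{-1}(E)$ for some Borel $E \subset \mathscr{R}(m)$. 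The nontriviality of $X$ forces $|E|>0$, and then $X = U^{-1}(\chi_A L^2(\mathbb{R}^n)) = S_{\varphi_0} \mathscr{F}^2(\mathbb{C}^n)$ for the symbol $\varphi_0$ associated to $\chi_E$ as in the theorem.

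The principal obstacle is the converse: while producing reducing subspaces from characteristic functions is essentially mechanical once Theorem~\ref{prop4.1} is in hand, showing that \emph{every} reducing subspace projection is itself a multiplication operator is the delicate point. In general, the commutant of a single $M_m$ in $B(L^2(\mathbb{R}^n))$ is larger than $L^\infty(\mathbb{R}^n)$; what rescues the argument here is precisely that $P_Y$, being a reducing projection, commutes with both $M_m$ and $M_m^*$, and therefore with the whole commutative von Neumann algebra they generate. The maximal abelianness of $L^\infty(\mathbb{R}^n)$ then forces $P_Y$ into $L^\infty$ as the desired characteristic function, after which commutation with $M_m$ pins the supporting set down to $m^{-1}(E)$.
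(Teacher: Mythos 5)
The ``if'' half of your argument is correct and coincides with the paper's: $S_{\varphi_0}=U^{-1}M_{\chi_E}U$ is a self-adjoint idempotent commuting with $S_\varphi$, so its range reduces $S_\varphi$. The genuine gap is in the ``only if'' half. Reducibility of $X$ only gives you that $P_Y=UP_XU^{-1}$ commutes with $M_m$ and $M_{\bar m}$, i.e.\ that $P_Y$ lies in the commutant of the von Neumann algebra $W^*(M_m)$ generated by $M_m$. Maximal abelianness of $L^\infty(\mathbb{R}^n)$ says $(L^\infty)'=L^\infty$; to invoke it you would need $P_Y$ to commute with \emph{every} multiplication operator $M_h$, $h\in L^\infty(\mathbb{R}^n)$, not merely with $M_m$ and $M_{\bar m}$. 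Since $W^*(M_m)$ is in general a proper subalgebra of $L^\infty(\mathbb{R}^n)$, its commutant is in general strictly larger than $L^\infty(\mathbb{R}^n)$, and the conclusion $P_Y=M_{\chi_A}$ does not follow. Concretely, take $n=1$ and $m$ even and nonzero (say $m(x)=e^{-x^2}$): the reflection $Rf(x)=f(-x)$ commutes with $M_m$ and $M_{\bar m}$, so the subspace of even functions of $L^2(\mathbb{R})$ reduces $M_m$, yet its projection $\tfrac12(I+R)$ is not a multiplication operator, and the corresponding subspace $B\mathcal{F}^{-1}\bigl(\{\text{even functions}\}\bigr)$ of $\mathscr F^2(\mathbb C)$ is not of the form $S_{\varphi_0}\mathscr F^2(\mathbb C)$. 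For the same reason your final refinement, that $A$ must be $m$-measurable, i.e.\ $A=m^{-1}(E)$, cannot be deduced from commutation with $M_m$ alone: every multiplication operator commutes with $M_m$, whatever its supporting set.

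It is worth noting that this is exactly the point where the paper's own proof is thin: it asserts ``clearly, $P$ is the spectral projection of $S_\varphi$'' and only then uses maximal commutativity of $\mathscr A$ to place $P$ in $\mathscr A$; but a projection commuting with a normal operator need not be one of its spectral projections, and that unproved assertion is what carries the whole burden. So your strategy (conjugate by $U=\mathcal F B^{-1}$ to the multiplier picture, then argue the reducing projection is a multiplication by a characteristic function) is essentially the paper's strategy, and the step you yourself single out as delicate is precisely the step that neither your appeal to maximal abelianness nor the paper's assertion actually justifies. The argument does close under an additional hypothesis, e.g.\ that $m$ generates $L^\infty(\mathbb{R}^n)$ as a von Neumann algebra (essentially that $m$ is injective modulo null sets), for then $W^*(M_m)$ is itself maximal abelian and $P_Y\in L^\infty(\mathbb{R}^n)$ follows as you intended; without such a hypothesis the ``only if'' direction, as you have argued it, fails.
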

\begin{proof}
Let $P$ be the orthogonal projection from $ \mathscr{F}^2(\mathbb{C}^n)$ to $X.$ If $X$ is a reducing  subspace 
of $S_{\varphi },$ then $PS_{\varphi}=S_{\varphi}P.$  Clearly, $P$ is   the spectral projection of $S_{\varphi }$.
%Note the spectral measure of $S_{\varphi}$ is unique,
Thus $PS_\psi =S_\psi P$ for any $S_\psi\in  {\mathscr A}$ since  ${\mathscr A}$  is maximal commutative.
We see that there is a $E\subset \mathscr{R}(m)$ with $\left\vert E\right\vert>0,$  such that $P=S_{\varphi_0},$ 
where  $\varphi_0=\int_{\mathbb{R}^n}\chi_{E}(x)e^{-2\left(x-\frac{i}{2}   z  \right)^2} dx.$ Thus
$$
X=P\mathscr{F}^2(\mathbb{C}^n)=S_{\varphi_0}\mathscr{F}^2(\mathbb{C}^n).
$$

Conversely, if  there is a $\varphi_0\in  \mathscr{F}^2(\mathbb{C}^n)$ with $\chi_{E}, E\subset \mathscr{R}(m) $ such that $X=S_{\varphi_0} \mathscr{F}^2(\mathbb{C}^n),$  then $X$ is a closed subspace. By noting that  $S_{\varphi}S_{\varphi_0}=S_{\varphi_0}S_{\varphi},$ and $S_{\varphi_0}^2=S_{\varphi_0},  $ $S_{\varphi_0}^{*}=S_{\varphi_0},$ we see that $S_{\varphi_0}$ is a projector which commutes with $S_{\varphi}.$ Hence, $X=S_{\varphi_0} \mathscr{F}^2(\mathbb{R}^n)$ is the reducing subspace of $S_{\varphi}.$
\end{proof}

%In general, it is impossible to get a complete representation of the invariant subspace lattice of a bounded linear operator  $T$  even if $T$ is normal.
We now recall \cite[Theorem 6.9]{Do1} which gives the characterization of the simple invariant subspaces of the coordinate multiplier on  $L^2(\mathbb{T})$.
\begin{lem}[{\cite[Theorem 6.9]{Do1}}]If $\mu $ is a positive regular Borel measure on $\mathbb{T}$, 
then a non-trivial closed subspace $X$ of $L^2(\mu )$  satisfies ${ M}_{z}X\subset X$ and
 $\cap_{n\geq 0}{ M}_{z^n}X=\{0\}$ if and only if there exists a Borel function $m$ on $\mathbb{T}$ such that
  $|m|^2d\mu =d\theta /2\pi $ and $X=m H^2(\mathbb{T}).$
\end{lem}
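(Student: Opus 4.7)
The plan is to carry out the classical Beurling--Helson--Lax wandering subspace argument for the isometry $M_z$ on $L^2(\mu)$. The easy direction is to verify that any $X$ of the form $m H^2(\mathbb{T})$ with $|m|^2 d\mu = d\theta/(2\pi)$ satisfies both hypotheses: invariance follows from $z(m H^2) = m(z H^2) \subset m H^2$, and $\cap_n z^n m H^2 = m \cdot \cap_n z^n H^2 = \{0\}$ reduces to the standard corresponding statement for $H^2$ on the unit circle (itself an easy Fourier coefficient argument). Nontriviality of $m H^2$ is immediate from the identity $\|m f\|_{L^2(\mu)} = \|f\|_{L^2(d\theta/2\pi)}$ for $f \in H^2$.

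For the nontrivial direction, I would first note that $M_z$ is an isometry on $L^2(\mu)$ since $|z|=1$ on $\mathbb{T}$, so its restriction to the invariant subspace $X$ is an isometry as well. The hypothesis $\cap_{n \geq 0} z^n X = \{0\}$ is precisely the condition that this isometry is pure, so the Wold decomposition yields the orthogonal sum $X = \bigoplus_{n \geq 0} z^n Y$ with wandering subspace $Y := X \ominus z X \neq \{0\}$. Picking a unit vector $m \in Y$, the orthogonality relations $m \perp z^k m$ for $k \geq 1$ translate to $\int_{\mathbb{T}} z^k |m|^2 \, d\mu = 0$; conjugating gives the same for $k \leq -1$. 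Hence all nonzero Fourier coefficients of the positive measure $|m|^2 d\mu$ vanish, forcing $|m|^2 d\mu = d\theta/(2\pi)$ after the normalization $\|m\|_{L^2(\mu)} = 1$.

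The main obstacle will be showing that $\dim Y = 1$, so that the Wold sum collapses to $X = \overline{\operatorname{span}}\{z^n m : n \geq 0\}$. For this, given two unit vectors $m_1, m_2 \in Y$, the same Fourier argument applied to the complex measure $\overline{m_1} m_2 \, d\mu$ (using $m_1 \perp z^k m_2$ for $k \geq 1$ together with its conjugate) forces $\overline{m_1} m_2 \, d\mu = c\, d\theta/(2\pi) = c |m_1|^2 d\mu$ for a scalar $c$. Since $|m_1|^2 d\mu = |m_2|^2 d\mu = d\theta/(2\pi)$ implies $|m_1| = |m_2|$ pointwise a.e., division on $\{m_1 \neq 0\}$ gives $m_2 = c m_1$, and on the complement both sides vanish. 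Finally, the map $f \mapsto m f$ is an isometry from $L^2(d\theta/(2\pi))$ into $L^2(\mu)$ by the identity $|m|^2 d\mu = d\theta/(2\pi)$, which sends $H^2(\mathbb{T})$ onto the closed subspace $\overline{\operatorname{span}}\{m z^n : n \geq 0\}$; combined with the Wold decomposition and $Y = \mathbb{C} m$, this yields $X = m H^2(\mathbb{T})$, completing the proof.
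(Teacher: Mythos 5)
Your proof is correct. Note that the paper does not prove this lemma at all: it is quoted verbatim from Douglas \cite[Theorem 6.9]{Do1}, and your Wold-decomposition/wandering-subspace argument (with the Fourier--Stieltjes coefficient computation showing $|m|^2 d\mu = d\theta/2\pi$ and the one-dimensionality of $X \ominus zX$) is essentially the classical Helson--Lowdenslager-style proof found in that reference, so there is nothing to reconcile with the paper itself.
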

By the connection between the Hardy space $H^2(\mathbb{T})$ and $H^2(i\mathbb{R}),$ we may characterize the simple invariant 
subspaces of ${ M}_{\varphi },$ where $\varphi(w)=\frac{w-1}{w+1}.$

We say that $X$ is the simple invariant subspace of ${ M}_z$.
\begin{thm}\label{th5.16}
Suppose $\varphi =\frac{w-1}{w+1}$ is the Riemann map  from $\mathbb{C}_+$ to $\mathbb{D},$
  ${ M}_{\varphi }$ is the multiplier on $L^2(i\mathbb{R})$ defined as ${ M}_{\varphi }f=\varphi f$
   for any $f\in L^2(i\mathbb{R}).$  Then a non-trivial closed subspace $X$ of $L^2(i\mathbb{R})$ 
   satisfies 
   $
   { M}_{\varphi }X\subset X $ and $  \cap_{n\geq 0}{ M}_{\varphi^n}X=\{0\}
   $ if and only if 
   there is a Borel function $m$ on $\mathbb{T}$ such that $|m|=1 a.e.$ and
$$
X =\big\{ (m\circ \varphi ) \varphi_{0}H^2(i\mathbb{R}) \big\} \ \ \ {\rm with}\ \ \varphi_0=\frac{1+it}{\sqrt{1+t^2}}.
$$
%where $\varphi_0=\frac{1+it}{\sqrt{1+t^2}}.$
Moreover,
$BF^{-1}X$ is the simple invariant subspace  of $S_{\psi},$
where   $
\psi =\int_{\mathbb{R}}\varphi e^{-2\left(x-\frac{i}{2}   z  \right)^2} dx.
$
\end{thm}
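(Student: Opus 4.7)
The overall strategy is to transport the problem to the unit circle via the Cayley transform, invoke the Douglas lemma cited in the excerpt, and then transfer back, finally composing with $B\mathcal{F}^{-1}$ to reach $\mathscr{F}^2(\mathbb{C})$.

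First I would build a unitary $U : L^2(i\mathbb{R}, dt) \to L^2(\mathbb{T}, d\theta/(2\pi))$ implemented by $\varphi$. Parametrising the two circles by $t$ and $\theta$ with $e^{i\theta}=\varphi(it)=\frac{it-1}{it+1}$, a direct calculation gives $d\theta/dt = 2/(1+t^{2})$, so the correct normalisation is
$$
U^{-1}F(it) \;=\; \tfrac{1}{\sqrt{\pi}(1-it)}\, F\!\left(\tfrac{it-1}{it+1}\right).
$$
A routine check shows that $U$ is unitary and intertwines $M_{\varphi}$ on $L^{2}(i\mathbb{R})$ with $M_{z}$ on $L^{2}(\mathbb{T},d\theta/(2\pi))$, and consequently $M_{\varphi^{n}}$ with $M_{z^{n}}$ for every $n\ge 0$. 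In particular, the ``simplicity'' conditions $M_{\varphi}X\subset X$ and $\bigcap_{n\ge 0}M_{\varphi^{n}}X=\{0\}$ correspond one-to-one with the analogous conditions for $UX\subset L^{2}(\mathbb{T},d\theta/(2\pi))$.

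Next, I would apply the cited lemma (Theorem 6.9 of Douglas) with $\mu=d\theta/(2\pi)$: a non-trivial closed subspace $Y\subset L^{2}(\mathbb{T},d\theta/(2\pi))$ is simple-invariant under $M_{z}$ if and only if $Y=mH^{2}(\mathbb{T})$ for some Borel $m$ with $|m|=1$ a.e. Pulling back by $U^{-1}$,
$$
U^{-1}(mH^{2}(\mathbb{T}))(it) \;=\; \tfrac{1}{\sqrt{\pi}(1-it)}\,(m\circ\varphi)(it)\,F\!\left(\tfrac{it-1}{it+1}\right),\qquad F\in H^{2}(\mathbb{T}).
$$
Rewriting $\frac{1}{1-it}=\frac{1+it}{1+t^{2}}=\varphi_{0}(t)/\sqrt{1+t^{2}}$ factors out the claimed unimodular $\varphi_{0}$, and the residual family $\bigl\{\tfrac{1}{\sqrt{\pi(1+t^{2})}}F\!\left(\tfrac{it-1}{it+1}\right):F\in H^{2}(\mathbb{T})\bigr\}$ is exactly the standard description of $H^{2}(i\mathbb{R})$ as boundary values of $H^{2}(\mathbb{C}_{+})$ under the Cayley isomorphism. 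This proves the first assertion.

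For the second half, I would combine this with Theorem~\ref{th1.1} and Lemma~\ref{le3.2}–\ref{le3.3}: since the multiplier associated to $\psi$ is exactly $\varphi$ (restricted to $\mathbb{R}$ along the imaginary-axis parametrisation), one has $S_{\psi}=B\mathcal{F}^{-1}M_{\varphi}\mathcal{F}B^{-1}$. Because $B\mathcal{F}^{-1}$ is a unitary from $L^{2}(\mathbb{R})\cong L^{2}(i\mathbb{R})$ onto $\mathscr{F}^{2}(\mathbb{C})$ that intertwines $M_{\varphi}$ with $S_{\psi}$ (and $M_{\varphi^{n}}$ with $S_{\psi^{n}}=S_{\psi}^{n}$), simple-invariant subspaces are carried to simple-invariant subspaces; hence $B\mathcal{F}^{-1}X$ is simple-invariant for $S_{\psi}$, as claimed.

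The main obstacle, and what I would be most careful with, is the bookkeeping around the Jacobian of the Cayley transform and the precise identification of $U^{-1}(H^{2}(\mathbb{T}))$ with $H^{2}(i\mathbb{R})$; this is where the particular unimodular function $\varphi_{0}(t)=\frac{1+it}{\sqrt{1+t^{2}}}$ is forced on us and where a wrong choice of normalising constant would produce a spurious extra factor. Verifying that all the transformations preserve both the invariance $M_{\varphi}X\subset X$ and the triviality $\bigcap_{n}M_{\varphi^{n}}X=\{0\}$ (equivalently $\bigcap_{n}M_{z^{n}}UX=\{0\}$) is routine because each map is unitary and intertwines the relevant multiplication operators.
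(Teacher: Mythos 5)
Your proposal is correct in substance and follows essentially the same route as the paper: conjugate $M_{\varphi}$ on $L^2(i\mathbb{R})$ to $M_z$ on $L^2(\mathbb{T},d\theta/2\pi)$ by a weighted Cayley-transform unitary (the paper uses the map $f\mapsto C_{\varphi^{-1}}\bigl(\frac{1}{1-\varphi}f\bigr)$, which is a constant multiple of your $U$), invoke the cited Douglas lemma, pull the subspace back, and deduce the ``moreover'' clause from $S_{\psi}=B\mathcal{F}^{-1}M_{\varphi}\mathcal{F}B^{-1}$ with $B\mathcal{F}^{-1}$ unitary. The only slip is your identification of the residual family: $\bigl\{(\pi(1+t^2))^{-1/2}F\circ\varphi : F\in H^2(\mathbb{T})\bigr\}$ equals $\varphi_0 H^2(i\mathbb{R})$ rather than $H^2(i\mathbb{R})$, since the Cayley factor is $\frac{1}{1+it}$ and not $(1+t^2)^{-1/2}$; this is harmless because every unimodular Borel function of $t$ factors as $h\circ\varphi$, so the extra unimodular factor is absorbed into $m$ and the stated characterization is unaffected (the paper's own final normalization exhibits the same cosmetic wobble).
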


\begin{proof}
We need only to prove that $X$, a simple invariant subspace of ${ M}_{\varphi }$, 
must have the form $(m\circ \varphi )\varphi_{0}H^2(i \mathbb{R})$ for some $m\in L^{\infty }(\mathbb{T})$ with $|m|=1$\ a.e..  Write
$$
\tilde{X}=C_{\varphi^{-1}}\Big(\frac{1}{1-\varphi}X\Big),
$$
where $C_{\varphi^{-1}}f=f\circ\varphi^{-1}$.  Then for any $\tilde{f}\in \tilde{X}, $ there is an $f\in X$ such that
$$
\tilde{f}=C_{\varphi^{-1}}\Big(\frac{1}{1-\varphi}f\Big)=\frac{1}{1-z}C_{\varphi^{-1}}f\in L^2(\mathbb{T}) .
$$
In fact,  for any measurable function $g$ on $\mathbb{T}$, we have
$$
\int_{\mathbb T}g(e^{i\theta}){d\theta \over 2\pi}=\int_{\mathbb{R}}g\circ \varphi (it)\frac{1}{1+t^2} {dt\over \pi}
$$
(see \cite{Ho1}), thus
\begin{align*}
\|\tilde f\|^2_{L^2(\mathbb{T})}&=\int_{\mathbb T}|\tilde{f}(e^{i\theta})|^2  {d\theta \over 2\pi}
 =\int_{\mathbb{R}}|\tilde{f}\circ \varphi (it)|^2\frac{1}{1+t^2}{dt\over \pi}\\
&=\int_{\mathbb{R}}\Bigg|\bigg[C_{\varphi^{-1}}\Big(\frac{1}{1-\varphi}f\Big)\bigg]\circ \varphi \Bigg|^2\frac{1}{1+t^2}{dt\over \pi}\\
&=\int_{\mathbb R}\Big|\frac{1}{1-\varphi}f\Big|^2\frac{1}{1+t^2}{dt\over \pi}\\
&=\frac{1}{4}\int_{\mathbb R}|f(it)|^2{dt\over \pi}\\
&=\frac{1}{4\pi}\|f\|^2_{L^2(i\mathbb{R})},
\end{align*}
that is,   $\|\tilde{f}\|_{L^2(\mathbb{T})}=\frac{1}{2\sqrt\pi}\|f\|_{L^2(i\mathbb{R})}$. Hence $\tilde{X}$ is closed. For arbitrary $g\in \tilde{X},$
 there is an $f\in X$ such that $g=C_{\varphi}^{-1}\big(\frac{1}{1-\varphi}f\big).$ Then
\begin{align*}
{ M}_zg&=zC_{\varphi}^{-1}\Big(\frac{1}{1-\varphi}f\Big)
=\Big(C_{\varphi}^{-1}\varphi\Big)C_{\varphi}^{-1}\Big(\frac{1}{1-\varphi}f\Big)=C_{\varphi}^{-1}\Big[\frac{1}{1-\varphi}(\varphi f )\Big].
\end{align*}
It is routine to check that ${ M}_{z^n}g=C_{\varphi }^{-1}[\frac{1}{1-\varphi}(\varphi^nf)]. $
Since $\varphi^n f\in X,$ we see that $\tilde{X}$ is a simple invariant subspace of ${ M}_z$.
Thus there is a $m\in L^{\infty }(\mathbb{T})$ with $|m|=1$ a.e. such that
$$
\tilde{X}=mH^2(\mathbb{T}).$$
On the other hand,
$$
C_{\varphi }\tilde{X}=C_{\varphi}C_{\varphi^{-1}}\Big(\frac{1}{1-\varphi }X\Big),
$$
we see that
$$
\frac{1}{1-\varphi}X=(m\circ\varphi )(1+it)H^2(i\mathbb{R}),
$$
since $C_{\varphi }H^2(\mathbb{T})=(1+it)H^2(i\mathbb{R}).$ Further,
$$
X=(m\circ\varphi )(1-\varphi)(1+it)H^2(i\mathbb{R}).
$$
Note
$
|1-\varphi|=\frac{2}{\sqrt{1+t^2}},$
write $\varphi_{0}=\frac{1+it}{\sqrt{1+t^2}},$ then $|\varphi_0|=1,$ and
$$
X=(m\circ\varphi )\varphi_0H^2(i\mathbb{R}).
$$
By the Fourier transform and Bargmann transform, we know that $BF^{-1}X$ is a simple 
invariant subspace of $S_{\psi},$ completing the proof of Theorem \ref{th5.16}.
\end{proof}

\section{Concluding Remarks}

This paper has studied a new class of operators on the Fock space which has totally different properties from the well-known Toeplitz operator.  For example, for any $\varphi\in L^2(\mathbb C^n)$, if the Toeplitz operator $T_\varphi$ is bounded, then  $T_\varphi^* = T_{\bar \varphi}$; for any analytic function $\varphi$, $T_\varphi$ is subnormal, and moreover, $T_\varphi$ is normal if and only if $\varphi$ is constant.  While $S_\varphi$ has better properties as evidenced by the theorems above. Moreover, $S_\varphi$ connects singular integrals in harmonic analysis to operators in the complex setting via the Bargmann transform.  This connection enables the resolution of problems in complex analysis via techniques from harmonic analysis.

%{\color{red}Closely related questions are: What is the form of the Toeplitz operator after application of the Bargmann transform?  Can we also apply harmonic analysis techniques to study properties of Toeplitz operators? These questions will guide the next steps in this area.}

\bigskip
\noindent
{\bf Acknowledgements}:
We would like to thank the referee for a careful reading of this manuscript and providing many helpful comments and suggestions.

G. Cao is supported by the NNSF
of China, Grant No. ~11671152.
J. Li is supported by the Australian Research Council (ARC) through the
research grant DP170101060 and by Macquarie University Research Seeding Grant.
B. D. Wick is supported in part by National Science Foundation DMS grant $\#$1500509 and $\#$1800057.
 L. Yan is supported by the NNSF
of China, Grant No. ~11521101 and  ~11871480,   and   by the Australian Research Council (ARC) through the
research grant  DP190100970.
G. Cao and L. Yan   thank K. Zhu for helpful discussions.

 \vskip 1cm

\end{document}